\documentclass{article}

\usepackage{arxiv}

\usepackage[utf8]{inputenc} 
\usepackage[T1]{fontenc}    
\usepackage{hyperref}       
\usepackage{url}            
\usepackage{booktabs}       
\usepackage{amsfonts}       
\usepackage{nicefrac}       
\usepackage{microtype}      
\usepackage{lipsum}		
\usepackage{graphicx}
\usepackage[numbers]{natbib}
\usepackage{doi}

\usepackage{amsmath,amssymb,a4wide}
\usepackage{amsthm}
\newtheorem{theorem}{Theorem}
\newtheorem{remark}[theorem]{Remark}
\newtheorem{lemma}[theorem]{Lemma}
\newtheorem{hypo}[theorem]{Hypothesis}
\usepackage{color,graphicx,epstopdf}
\usepackage{lineno}
\usepackage{color}
\usepackage{tikz}
\usetikzlibrary{patterns}
\usepackage{diagbox}
\usepackage{pgfplots}
\usepackage{pgfplotstable}
\usepackage{enumitem}
\usepackage{hyperref}
\usepackage{fp-eval}
\usepackage{esint}
\usepackage{subfigure}
\usepackage{caption}
\usepackage{mathabx}
\usepackage{booktabs}
\usepackage{subdepth}
\usepackage{nicefrac}
\usepackage{xspace}
\usepackage{units}

\newcommand{\bb}{\boldsymbol b}
\newcommand{\bx}{\boldsymbol x}
\newcommand{\bu}{\boldsymbol u}
\newcommand{\bv}{\boldsymbol v}

\newcommand{\bV}{\boldsymbol V}
\newcommand{\bee}{\boldsymbol e}
\newcommand{\bH}{\boldsymbol H}

\newcommand{\bg}{\boldsymbol g}
\newcommand{\bL}{\boldsymbol L}
\newcommand{\bn}{\boldsymbol n}
\newcommand{\bs}{\boldsymbol s}
\newcommand{\by}{\boldsymbol y}
\newcommand{\bff}{\boldsymbol f}

\newcommand{\bzero}{\boldsymbol 0}
\newcommand{\bI}{\boldsymbol I}
\newcommand{\gD}{\Gamma_{\mathrm{D}}}
\newcommand{\gN}{\Gamma_{\mathrm{N}}}
\newcommand{\di}{\mathrm d}
\newcommand{\bR}{\boldsymbol R}
\newcommand{\bQ}{\boldsymbol Q}
\newcommand{\bP}{\boldsymbol P}
\newcommand{\br}{\boldsymbol r}
\newcommand{\dive}{\nabla\cdot}
\newcommand{\foralls}{\forall\;}
\newcommand{\Ihv}{\bI_h^{\bV}}
\newcommand{\Ihq}{I_h^{Q}}
\newcommand{\cinv}{c_{\mathrm{inv}}}
\renewcommand{\(}{\left(}
\renewcommand{\)}{\right)}
\newcommand{\jp}[1]{\left[\!\!\; \left|#1 \right|\!\!\;\right]_F}
\newcommand{\no}[2]{\left\| #1 \right\|_{#2}}
\newcommand{\blist}{\begin{list}{}{\itemsep0.0ex\parsep0.1ex\topsep0.2ex\leftmargin1.6em
\labelwidth1.3em}}

\title{A robust a posteriori error estimator for the Oseen problem}

\author{ 
	Muhammad Afzal  \\
	Center for Applied Mathematics and Bioinformatics\\ Department of Mathematics and Natural Sciences,\\ Gulf University for Science and Technology \\Mubarak Al-Abdullah Area/West Mishref, Kuwait \\
	\texttt{afzal.m@gust.edu.kw} \\
	\And 
	\href{https://orcid.org/0000-0002-9322-0373}{\includegraphics[scale=0.06]{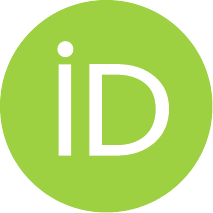}\hspace{1mm} Naveed Ahmed }\\
	Center for Applied Mathematics and Bioinformatics\\ Department of Mathematics and Natural Sciences,\\ Gulf University for Science and Technology \\Mubarak Al-Abdullah Area/West Mishref, Kuwait \\
	\texttt{ahmed.n@gust.edu.kw}\\
	\And
	\href{https://orcid.org/0000-0002-2711-4409}{\includegraphics[scale=0.06]{orcid.pdf}\hspace{1mm}Volker John} \\
	Weierstrass Institute for Applied Analysis and Stochastics (WIAS)\\ Anton-Wilhelm-Amo Str. 39\\ 
	10117 Berlin, Germany \\
	\texttt{john@wias-berlin.de} \\
}

\renewcommand{\shorttitle}{A robust a posteriori error estimator for the Oseen problem}

\begin{document}
\maketitle

\begin{abstract} A residual-based a posteriori error estimator is proposed for the incompressible Oseen problem 
in the convection-dominated regime. The SUPG/PSPG/grad-div stabilized finite element method
is used as discretization. The error estimator estimates the global error in a norm that is used 
in the a priori error analysis of the method. Based on several hypotheses concerning the error and 
interpolation errors, the robustness of the estimator in the convection-dominated regime is proved.
Numerical studies support the analytic results. Finally, the extension of the a posteriori 
error estimator to the steady-state Navier--Stokes equations is discussed. 
\end{abstract}

\keywords{Oseen problem \and SUPG/PSPG/grad-div stabilized method \and error in SUPG/PSPG/grad-div norm \and 
robust residual-based a posteriori error estimator}

\section{Introduction}

This paper considers the stationary incompressible Oseen problem, already with dimensionless quantities, 
for the velocity $\bu\ :\ \Omega\to\mathbb{R}^d$ and the 
pressure $p\ :\ \Omega\to\mathbb{R}$ that is of the form
\begin{equation}\label{eq:oseen-strong}
\begin{array}{rcll}
-\nu\,\Delta \bu + (\bb\!\cdot\!\nabla)\bu + \sigma \bu + \nabla p &=& \bff & \mbox{in }\Omega, \\
\nabla \cdot \bu &=& 0 & \mbox{in }\Omega,\\
\bu &=& \bzero & \text{on }\gD,\\
(\nu \nabla\bu - p\mathbb I)\cdot\bn &= & \bg  & \mbox{on }\gN,
\end{array}
\end{equation}
where $\Omega\subset\mathbb{R}^d$, $d\in \{2,3\}$, is a polygonal (polyhedral) domain with Lipschitz boundary 
$\partial\Omega=\gD\cup\gN$, $\gD\cap\gN=\emptyset$. The unit outer normal vector at 
$\partial\Omega$ is denoted by $\bn$ and $\mathbb I$ is the unit tensor. 
The Dirichlet part $\gD$ has positive 
$(d-1)$-dimensional measure and contains the inflow boundary
\begin{equation}\label{eq:inflow_bdry}
\partial\Omega^-:=\left\{ \bx\in\partial\Omega\ :\ \ \bb(\bx)\cdot\bn(\bx)<0 \right\}\subset\gD.
\end{equation}
Given data comprise the (kinematic) viscosity $0<\nu\ll 1$, the  convection field
$\bb \in W^{1,\infty}(\Omega)^d= {\boldsymbol W}^{1,\infty}(\Omega)$, which is assumed to be divergence-free $\nabla\cdot\bb=0$ a.e. in $\Omega$, 
a scalar reaction field
$\sigma\in L^\infty(\Omega)$ with $\sigma\ge \sigma_0\ge 0$, $\sigma_0 \in \mathbb R$,  a.e. in $\Omega$, acting componentwise on $\bu$, a body force
$\bff\in L^2(\Omega)^d = \bL^2(\Omega)$, and boundary data
$\bg\in L^2(\gN)^d=\bL^2(\gN)$. The consideration of homogeneous data at the Dirichlet boundary is just 
for the sake of simplifying the presentation of the numerical analysis. Inhomogeneous 
Dirichlet boundary data could be incorporated with a standard lifting argument. 

Oseen problems are standard model problems in numerical analysis. They can be considered as 
a linearized incompressible Navier--Stokes problem. In practice, Oseen-type problems have to be solved in each 
step of certain iterative methods in each time step for time-dependent Navier--Stokes problems, where $\sigma$ is 
proportional to the inverse of the length of the time step. 

A situation of much interest in practice is the case that $\nu \ll \|\bb\|_{L^\infty(\Omega)}$, the 
so-called convection-dominated case. In this case, where the convective transport dominates the viscous one, 
the solution might possess very small scales, e.g., 
layers, that cannot be resolved on affordable grids. 
Thus, from the numerical point of view, one 
faces a multiscale problem. It is well known that the numerical simulation of multiscale problems 
requires to use special numerical methods to perform stable simulations. There are various proposals
of stabilized finite element methods for Oseen problems, which include the streamline-upwind Petrov--Galerkin (SUPG)/pressure-stabilization Petrov--Galerkin (PSPG) 
method, e.g., see \cite{HS90,TL91,FF92} for some of the early applications of this method to incompressible flow problems,
the local projection stabilization (LPS) method, e.g. see \cite{BB06} for its application to the Oseen problem, or the continuous interior 
penalty (CIP) method, studied for the Oseen problem in \cite{BFH06}. The most popular one is probably the first method. Another 
stabilization, which is popular for finite element methods applied to all kinds of incompressible flow problems, 
is the grad-div stabilization from \cite{FH88}. Originally, this stabilization was proposed to improve 
the local mass conservation of finite element solutions. But it has been found that it also stabilizes 
dominant convection, e.g., see \cite{DGJN18,GJN21}.

This paper considers SUPG/PSPG/grad-div stabilized finite element methods for the Oseen problem. With 
this method, both inf-sup stable pairs of velocity-pressure finite element spaces and equal order 
pairs can be used. The a priori error analysis of this method, see \cite{TV96}, aims to prove a 
so-called convection-robust (or semi-robust) estimate. That is, the constant in the estimate does not 
blow up if $\nu\to 0$. To achieve such an estimate, one has to use an appropriate norm, which contains 
contributions from the stabilization, compare \eqref{eq:norm_spg} and \eqref{eq:norm_spg_0} below.

The goals of an a posteriori error analysis are twofold. First, a derived error estimator should 
provide information about the global error in some norm. Second, the local contributions of the estimator 
should control the construction of adaptively refined grids. There are several types of a posteriori
error estimators, e.g., residual-based estimators, see \cite{Ver13}, or estimators that are based on the 
solution of local problems, see \cite{AO00}. In particular, estimators for the Oseen or the steady-state 
Navier--Stokes equations can be found, e.g., in \cite{AO97,Ber02,Bur15,ADR16,BCG17}. As discussed in \cite{JKN18},
none of these estimators leads to convection-robust estimates. More recently, in \cite{KK20}, a robust 
residual-based estimator for a $\bH(\mathrm{div})$-conforming discontinuous Galerkin (DG) finite element method 
for the Oseen equations is presented and analyzed. This method contains only the 
standard stabilization term of DG methods.
The robust error bound is derived with respect to a norm that has as one part 
a dual seminorm, which can be only approximated, but not computed. 

This paper can be considered as an extension of \cite{JN13}, where a robust residual-based error estimator 
for convection-dominated scalar equations was proposed. Like in \cite{JN13}, one of the goals is that the estimator 
should be for the same norm for which the a priori finite element error analysis of the method is performed. 
And again like in \cite{JN13}, the derivation of the estimator is based on some hypotheses concerning the relation 
of interpolation errors and the error of the finite element solution in the stabilized norm. Some of the hypotheses
are analogs to those from \cite{JN13}, but some new hypotheses are needed, in particular coming from the fact 
that the second component of the solution is the pressure field. With this approach, a residual-based
a posteriori error estimator is derived and an estimate for the global error, in a norm that 
is used in the a priori error analysis, is proved. Numerical studies  support the robustness of the proposed 
estimator. In addition, the satisfaction of the hypotheses is studied. Finally, an extension of the 
proposed estimator to the steady-state incompressible Navier--Stokes problem is presented and illustrated with a 
brief numerical study. 

The paper is organized as follows. Section~\ref{sec:weak} introduces the SUPG/PSPG/grad-div stabilized
finite element method for the Oseen problem, together with all necessary notations. The residual-based 
a posteriori error estimator is derived and discussed in Section~\ref{sec:est}. Section~\ref{sec:numres}
contains the numerical studies for the Oseen problem. The discussion of an extension of the error 
estimator to the steady-state Navier--Stokes problem is presented in Section~\ref{sec:nse}.

\section{Notations, Preliminaries, Weak Formulation, the SUPG/PSPG/grad-div Stabilized Finite Element Method}\label{sec:weak}

This paper uses standard notations for Lebesgue and Sobolev spaces and their norms. Vector-valued 
functions and spaces are denoted with bold-faced symbols. Let $\omega$ be a domain or part of 
a manifold, then the inner product in $L^2(\omega)$ or $\bL^2(\omega)$ is denoted by $(\cdot,\cdot)_\omega$. 
If $\omega=\Omega$, then the subscript is neglected. 

Let 
\[
\bV = \left\{\bv\in \bH^1(\Omega)\ : \ \bv|_{\gD} = \bzero \right\}
\]
and 
\[
Q = \begin{cases} 
L^2(\Omega) & \mbox{if } \text{meas}_{d-1} \gN > 0, \\
L_0^2(\Omega) & \mbox{else},
\end{cases}
\]
where the functions from $L_0^2(\Omega)$ have integral mean value zero.  Then, a weak formulation 
of the Oseen problem \eqref{eq:oseen-strong} is obtained in the usual way and it reads as follows:
Find $(\bu,p) \in \bV \times Q$ such that 
\begin{eqnarray}
\nu\(\nabla\bu,\nabla\bv\) + \(\(\bb\cdot \nabla\)\bu,\bv\) + \(\sigma\bu,\bv\) - \(p,\nabla \cdot \bv\)
 &=& (\bff,\bv) + (\bg,\bv)_{\gN}
 \quad \foralls \bv\in \bV, \nonumber\\
 & &  \label{eq:weak-1} \\
(\nabla\cdot\bu,q) & = & 0 \quad \foralls q\in Q. \label{eq:weak-2}
\end{eqnarray}
Using that $\bb$ is divergence-free and \eqref{eq:inflow_bdry} gives with integration by parts
\begin{equation}\label{eq:skew_symm}
\(\(\bb\cdot\nabla\)\bv,\bv\) = \frac12 \int_{\gN} \(\bb \cdot\bn\)\(\bv\cdot\bv\)\ \di\bs \ge 0. 
\end{equation}
Since in addition the reaction field $\sigma$ is nonnegative, the operator on the left-hand side of \eqref{eq:weak-1} that describes the velocity-velocity coupling is 
coercive. Together with the inf-sup stability of the spaces $\bV$ and $Q$ and the assumed regularity of the 
data, one can conclude, using the standard theory of linear saddle point problems, that problem
\eqref{eq:weak-1}--\eqref{eq:weak-2} possesses a unique solution. 

Let $\{\mathcal{T}_h\}$, $h>0$, be a family of partitions of $\Omega$ into
mesh cells $K\in\mathcal{T}_h$, either simplices or parallelepipeds. 
It will be assumed that each mesh is admissible in the usual sense, e.g., see \cite[pp.~38 and~51]{Cia78}, and that 
the family is shape-regular. That is, there exists
a constant $\vartheta>0$ such that for each $K\in\mathcal{T}_h$ and for all $\mathcal{T}_h$ it holds
$h_K/\rho_K\le \vartheta$, where $h_K$ denotes the diameter of $K$ and $\rho_K$
the diameter of the largest ball inscribed in $K$. The mesh parameter of a triangulation is defined by 
$h:=\max_{K\in\mathcal{T}_h} h_K$. Denote by $\mathcal F_h$ the set of facets $\{F\}$ ($(d-1)$-dimensional 
faces) and by $\mathcal F_{h,\mathrm{in}}$ the set of facets that 
are not contained in $\partial\Omega$. The interior of each boundary facet should belong either to 
$\gD$ or $\gN$.

As already mentioned in the introduction, the numerical solution of the Oseen problem in the 
convection-dominated regime requires a stabilized discretization. The most popular residual-based 
stabilized finite element method is the SUPG/PSPG/grad-div method. To introduce this method, let 
$\bV_h \times Q_h$ be a pair of conforming finite element spaces, i.e., $\bV_h \subset \bV$ and 
$Q_h\subset Q$. The velocity finite element space is given by 
\[
\bV_h = \left\{\bv_h \in \bR_k \cap \bV, k\ge 1\right\},
\]
where $\bR_k$ is the space $\bP_k$ of continuous piecewise polynomials of at most degree $k$ on simplicial grids 
and the space $\bQ_k$ of continuous piecewise polynomials of degree at most $k$ for each variable on 
meshes consisting of quadrilaterals/hexahedra. Likewise, the space $Q_h$ consists of piecewise polynomials of degree $l\ge0$, 
continuous or discontinuous, so that $Q_h \in \{P_l, P_l^{\mathrm{disc}}\}$ on simplicial grids and 
$Q_h \in \{Q_l, Q_l^{\mathrm{disc}}\}$ on grids consisting of quadrilaterals/hexahedra. Let 
\[
\tilde Q =  \left\{q \in Q \ : \ \left. q\right|_K \in H^1(K) \mbox{ for all } K\in\mathcal T^h \right\}.
\]

Let $K_1, K_2 \in \mathcal T_h$ be two mesh cells 
with a common facet $F = K_1 \cap K_2$.  The unit normal $\bn_F$ at $F$
is chosen to be, without loss of generality, the outward normal with respect to $K_1$.
Then, the jump of a function $v$ across the facet $F$ in the point $\bx\in F$ is
defined by 
\[
\jp{v} = \lim_{\by\to\bx, \by\in K_1} v(\by) - \lim_{\by\to\bx, \by\in K_2} v(\by),
\quad \bx \in F,
\]
if both limits are well defined.

Consider a triangulation  $\mathcal T_h$ of $\Omega$ such that there is a set of facets with 
$\overline{\gN} = \bigcup F$.  Given $\bff \in \bL^2(\Omega)$ and $\bg\in L^2(\gN)^d$, the SUPG/PSPG/grad-div stabilized finite element method seeks $\(\bu_h,p_h\)\in
\bV_h\times Q_h$ such that 
\begin{equation}\label{eq:oseen_SUPG_PSPG_graddiv}
A_{\mathrm{spg}}\(\(\bu_h,p_h\),\(\bv_h,q_h\)\)=L_{\mathrm{spg}}\(\(\bv_h,q_h\)\) \quad 
\foralls \(\bv_h,q_h\)\in \bV_h\times Q_h,
\end{equation}
where the bilinear form $A_{\mathrm{spg}}\ : \ (\bV \times \tilde Q) \times (\bV \times \tilde Q) \to \mathbb R$
is defined by 
\begin{eqnarray}\label{eq:a_spg}
\lefteqn{A_{\mathrm{spg}}\(\(\bu,p\),\(\bv,q\)\)}\nonumber \\
&=&\nu\(\nabla \bu,\nabla \bv\)+\(\(\bb\cdot
\nabla\)\bu+\sigma\bu,\bv\)-\(\dive \bv,p\)+ \(\dive \bu,q\) \nonumber\\
&& +\sum_{K\in\mathcal
T_h} \mu_K \(\nabla \cdot \bu,\nabla \cdot \bv\)_K + \sum_{F\in\mathcal
F_h} \gamma_F \(\jp{p},\jp{q}\)_F \\
&& +\sum_{K\in\mathcal
T_h} \(-\nu \Delta \bu+\(\bb\cdot \nabla \)\bu+\sigma \bu+\nabla
p,\delta_K \( \(\bb\cdot \nabla \)\bv + \nabla q\)\)_K \nonumber
\end{eqnarray}
and the linear form $L_{\mathrm{spg}}\ : \ (\bV \times \tilde Q) \to \mathbb  R$ by
\begin{equation}\label{eq:l_spg}
L_{\mathrm{spg}}\(\(\bv,q\)\)=\(\bff,\bv\)+ \sum_{F\in \overline{\gN}} \(\bg,\bv \)_F +
\sum_{K\in\mathcal T_h} \(\bff,\delta_K \(\(\bb\cdot
\nabla \)\bv + \nabla q\)\)_K.
\end{equation}

The parameters of method \eqref{eq:oseen_SUPG_PSPG_graddiv}--\eqref{eq:l_spg} are the grad-div parameter
$\mu_K \ge 0$, the jump parameter $\gamma_F >0$, and the SUPG parameter $\delta_K >0$. Notice that the jump 
term disappears if $Q_h$ consists of continuous functions. It can be also observed that the SUPG term leads 
to a coupling of pressure ansatz and test functions. This kind of coupling is called PSPG method. Consequently, 
the SUPG/PSPG/grad-div method does not lead to a problem of saddle point type. 

The goal of using stabilized methods consists of computing numerical solutions that show an improved physical 
consistency. For the SUPG/PSPG/grad-div stabilized finite element method of the Oseen equations, spurious
oscillations should be reduced by the SUPG stabilization and the conservation of mass should be improved
by the grad-div stabilization. The reduction of spurious oscillations is usually supported in the numerical
analysis with so-called robust error estimates
(sometimes called semi-robust). These are estimates where the constant does not blow up for $\nu\to0$ and 
all other data fixed. That is, if the dependency on $\nu$ is tracked in the analysis, robust error bounds do not contain 
inverse powers of $\nu$. It is well known that robust estimates for convection-dominated problems can be 
achieved only for special norms. Such norms contain usually terms of the actually used stabilization. 
For the analysis of the  SUPG/PSPG/grad-div method, appropriate (mesh-dependent) norms are 
\begin{eqnarray}\label{eq:norm_spg}
\no{\(\bv,q\)}{\mathrm{spg}}&=&\Bigg(\nu \no{\nabla
\bv}{\bL^2(\Omega)}^2+ \no{\sigma^{1/2}\bv}{\bL^2(\Omega)}^2+ \sum_{K\in\mathcal T_h} \mu_K \no{\dive
\bv}{L^2(K)}^2 \\
&& +\sum_{F\in\mathcal F_h}\gamma_F \no{\jp{q}}{L^2(F)}^2 
+\sum_{K\in\mathcal T_h}\delta_K \no{\(\bb\cdot \nabla\)\bv + \nabla q}{\bL^2(K)}^2\Bigg)^{1/2}
\nonumber 
\end{eqnarray}
and
\begin{equation}\label{eq:norm_spg_0}
\no{\(\bv,q\)}{\mathrm{spg},p} = \( \no{\(\bv,q\)}{\mathrm{spg}}^2 + \omega_{\mathrm{pres}}^{-2}\no{q}{L^2(\Omega)}^2\)^{1/2}
\end{equation}
with
\begin{equation*}
\omega_{\mathrm{pres}} = \max\left\{1,\nu^{-1/2},\no{\sigma}{L^\infty(\Omega)}^{1/2}\right\}.
\end{equation*}

The numerical analysis of the SUPG/PSPG/grad-div method, usually presented for the situation $\gD=\partial\Omega$, 
is well established, showing, under appropriate assumptions on some stabilization parameters, existence and 
uniqueness of a solution and a robust estimate for the errors  $\|(\bu-\bu^h,p-p^h)\|_{\mathrm{spg}}$ 
and $\|(\bu-\bu^h,p-p^h)\|_{\mathrm{spg},p}$, e.g., see \cite{TV96},  \cite[Section~IV.3.1]{RST08}, or 
\cite[Section~5.3.2]{Joh16}. 

\begin{remark}[Optimal asymptotic choice of the stabilization parameters] \label{rem:para_choice}
To achieve optimal order of convergence in these norms, one has to choose 
the stabilization parameters appropriately, e.g., see the discussion in \cite[Remark~5.42]{Joh16}. Here, one 
has to distinguish two situations that are of main interest. 

If one uses a pair of finite element spaces
with $k=l+1$ that 
satisfies a discrete inf-sup condition, like Taylor--Hood spaces $\bP_k/P_{k-1}$, $k\ge2$, then the optimal 
asymptotic choices are 
\begin{equation}\label{eq:para_inf_sup}
\delta_K \sim h_K^2, \quad \mu_K \sim 1, \quad \gamma_F \sim h_K.
\end{equation}
These choices lead to the convergence of order $k$. If in the following 'inf-sup stable pairs of spaces' are 
mentioned, then always pairs with polynomials of order $k$ for the velocity and $k-1$ for the pressure are meant.

In the case of using equal order spaces, i.e., $k=l \ge 1$, 
then the recommended asymptotic choices are 
\begin{equation}\label{eq:para_equal}
\delta_K \sim \begin{cases}
h_K & \mbox{if } \nu < h_K,\\
h_K^2 & \mbox{if } \nu \ge h_K,
\end{cases} \quad
\mu_K \sim \begin{cases} h_K  & \mbox{if } \nu < h_K,\\
h_K \mbox{ or } 1 &  \mbox{if } \nu \ge h_K,
\end{cases}
\quad
\gamma_F \sim \begin{cases}
1 & \mbox{if } \nu < h_K,\\
h_K & \mbox{if } \nu \ge h_K.
\end{cases}
\end{equation}
The order of error reduction in the convection-dominated regime is $k+\nicefrac12$ and in the diffusion-dominated
case $k$. In this paper, the notion `pair of equal order spaces' will be used exclusively for pairs like $\bP_k/P_k$ or 
$\bQ_k/Q_k$.

Comprehensive numerical studies on the choice of the grad-div stabilization parameter for finite element
methods for the Oseen problem can be found in \cite{Ahm17}.
\end{remark}

Let, on simplicial meshes $\bP_k \subset \bV_h$ or on quadrilateral/hexahedral meshes $\bQ_k \subset \bV_h$, $k\ge 1$, 
let $s\in\{0,1\}$ with $s\le t\le k+1$, then denote by $\Ihv\ : \ \bH^t(\Omega) \to \bV_h$ a bounded linear 
interpolation operator. It is assumed that this operator satisfies 
for all $\bv\in \bH^t(\Omega)$ and for all $K\in\mathcal T_h$ the interpolation estimate
\begin{equation}\label{eq:local_int}
\left|\bv-\Ihv\bv\right|_{\bH^s(K)} \le C h_K^{t-s} |\bv|_{\bH^t(K)},
\end{equation}
where $\bH^0(K) = \bL^2(K)$. Using the technique of \cite[Lemma~2.1]{JN11}, one can derive 
for all $\bv \in \bV \cap \bH^{k+1}(\Omega)$ the estimate
\[
\left\|\Delta \(\bv - \Ihv\bv \) \right\|_{\bL^2(K)}^2 \le C h_K^{2k-2} \|\bv \|_{\bH^{r+1}(K)}^2.
\]
Likewise, let all polynomials of degree $l\ge0$, continuous or discontinuous, belong to $Q_h$, 
a bounded linear interpolation operator $\Ihq\ : \ H^t(\Omega) \to Q_h$ is defined with the same interpolation
property as in \eqref{eq:local_int}, with $s\le t\le l+1$.

In addition, a local trace inequality is needed. Let $K\in\mathcal T_h$ be a mesh cell and 
$F\in\partial K$ one of its facets, then the scalar-valued form of this  inequality is
\begin{equation}\label{eq:trace}
\no{q}{L^2(F)} \le C \(h_F^{-1/2} \no{q}{L^2(K)} + \no{q}{L^2(K)}^{1/2} \no{\nabla q}{L^2(K)}^{1/2}\)\quad 
\foralls q\in H^1(K),  
\end{equation}
see \cite[Lemma~3.1]{Ver98} for the proof. In the following, the vector-valued analog is used. 

\section{The a Posteriori Estimator}\label{sec:est}

The derivation of the a posteriori error indicator follows \cite{JN13}. It is based on some 
hypotheses concerning the relation of some norms of interpolation errors and the error in the 
norm $\no{(\cdot,\cdot)}{\mathrm{spg}}$. In particular, hypotheses on pressure interpolation
errors are needed. The difficulty is that the norm $\no{(\cdot,\cdot)}{\mathrm{spg}}$ does not 
contain a separate pressure term. Hence it is hard to find good reasonings for the stated assumptions.
In fact, we cannot provide such a good reasoning for the term that contains the jump of the 
finite element pressure. For this reason, only pairs of finite element spaces will be considered 
with continuous finite element pressure, so that the jump term does not appear. In practice, the 
use of continuous finite element pressures seems to be much more popular when applying the SUPG/PSPG/grad-div method, 
since it simplifies the implementation of this method considerably.

First, some properties of the discrete problem are shown. It can be checked with a straightforward
calculation that the SUPG/PSPG/grad-div method is consistent, i.e., 
\[
A_{\mathrm{spg}}\(\(\bu,p\),\(\bv_h,q_h\)\)=L_{\mathrm{spg}}\(\(\bv_h,q_h\)\) \quad 
\foralls \(\bv_h,q_h\)\in \bV_h\times Q_h,
\]
for a sufficiently smooth solution $(\bu,p) \in \bV\times Q$ of \eqref{eq:weak-1}--\eqref{eq:weak-2},
e.g., compare \cite[Lemma~5.31]{Joh16}.
From consistency the so-called Galerkin orthogonality follows
\begin{equation}\label{eq:gal_orth}
A_{\mathrm{spg}}\(\(\bu-\bu_h,p-p_h\),\(\bv_h,q_h\)\)=0 \quad \foralls \(\bv_h,q_h\)\in \bV_h\times Q_h.
\end{equation}

\begin{lemma}[Preliminary estimate of the error in the norm $\no{\cdot,\cdot)}{\mathrm{spg}}$] 
Let the solution $(\bu,p)$ of \eqref{eq:weak-1}--\eqref{eq:weak-2} be contained in $\bH^2(\Omega)\times H^1(\Omega)$, 
and let $(\bu_h,p_h)$ be the solution of the  SUPG/PSPG/grad-div problem \eqref{eq:oseen_SUPG_PSPG_graddiv}--\eqref{eq:l_spg}.
Assume that 
\begin{equation}\label{eq:ass_lem}
\delta_K \le \min \left\{\frac{h_K^2}{8 \cinv \nu}, \frac1{2\no{\sigma}{L^\infty(\Omega)}} \right\}
\quad \foralls K \in \mathcal T_h,
\end{equation}
then it holds
\begin{eqnarray}\label{eq:prelim_ee}
\no{\(\bu-\bu_h,p-p_h\)}{\mathrm{spg}}^2&\le& 2 A_{\mathrm{spg}}\((\bu-\bu_h,p-p_h),(\bu-\bu_h,p-p_h)\)\nonumber\\
&& + 8 \sum_{K\in \mathcal T_h} \delta_K \cinv^2 \nu^2h_K^{-2} \no{\nabla\(\bu-\Ihv\bu\)}{L^2(K)}^2\nonumber\\
&& + 4  \sum_{K\in \mathcal T_h} \delta_K \nu^2  \no{\Delta\(\bu-\Ihv\bu\)}{L^2(K)}^2.
\end{eqnarray}
\end{lemma}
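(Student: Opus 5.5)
The plan is to expand the bilinear form on the diagonal, with $\bee := \bu-\bu_h$ and $r := p-p_h$, and to absorb the only indefinite contribution, the SUPG/PSPG cross term, into the norm. Testing $A_{\mathrm{spg}}$ with $(\bee,r)$ itself, the pressure coupling terms $-(\dive\bee,r)+(\dive\bee,r)$ cancel. The convective term is nonnegative by \eqref{eq:skew_symm}. Writing $\bs_K := (\bb\cdot\nabla)\bee+\nabla r$ on each $K$, one obtains
\begin{equation*}
A_{\mathrm{spg}}\((\bee,r),(\bee,r)\) \ge \no{\(\bee,r\)}{\mathrm{spg}}^2 + \sum_{K\in\mathcal T_h}\delta_K\(-\nu\Delta\bee+\sigma\bee,\bs_K\)_K .
\end{equation*}
The diffusion, reaction, grad-div, jump and $\delta_K\no{\bs_K}{\bL^2(K)}^2$ pieces assemble exactly into $\no{(\bee,r)}{\mathrm{spg}}^2$. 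It therefore suffices to show that the absolute value of the remaining sum is at most $\tfrac12\no{(\bee,r)}{\mathrm{spg}}^2$ plus the two interpolation sums divided by $2$. Multiplying by $2$ then gives \eqref{eq:prelim_ee}. The regularity $\bu\in\bH^2(\Omega)$ is what makes $\Delta\bee$ meaningful elementwise.

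For the reaction part I would use Cauchy--Schwarz and Young:
\[
\delta_K(\sigma\bee,\bs_K)_K \le \delta_K\no{\sigma}{L^\infty(\Omega)}\no{\sigma^{1/2}\bee}{\bL^2(K)}\no{\bs_K}{\bL^2(K)} \le \delta_K\no{\sigma}{L^\infty(\Omega)}^2\no{\sigma^{1/2}\bee}{\bL^2(K)}^2/\no{\sigma}{L^\infty(\Omega)}+\tfrac18\delta_K\no{\bs_K}{\bL^2(K)}^2 .
\]
The condition $\delta_K\le 1/(2\no{\sigma}{L^\infty(\Omega)})$ from \eqref{eq:ass_lem} bounds the first piece by $\tfrac12\no{\sigma^{1/2}\bee}{\bL^2(K)}^2$ (up to the precise Young weights).

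The diffusion part is the main obstacle, because $\Delta\bee$ is not controlled by the norm. I would split $\bee=(\bu-\Ihv\bu)+(\Ihv\bu-\bu_h)$. On the discrete piece I would apply the local inverse inequality $\no{\Delta\bv_h}{\bL^2(K)}\le \cinv^{1/2} h_K^{-1}\no{\nabla\bv_h}{\bL^2(K)}$, normalized so that $\cinv$ enters as in \eqref{eq:ass_lem}. I would then reinsert $\bu$ via $\no{\nabla(\Ihv\bu-\bu_h)}{}\le\no{\nabla\bee}{}+\no{\nabla(\bu-\Ihv\bu)}{}$. This leaves three products with $\delta_K^{1/2}\no{\bs_K}{\bL^2(K)}$, namely
\[
\delta_K^{1/2}\nu\no{\Delta(\bu-\Ihv\bu)}{\bL^2(K)},\qquad \delta_K^{1/2}\nu\cinv^{1/2} h_K^{-1}\no{\nabla\bee}{\bL^2(K)},\qquad \delta_K^{1/2}\nu\cinv^{1/2}h_K^{-1}\no{\nabla(\bu-\Ihv\bu)}{\bL^2(K)} .
\]
Each is split by Young with a weight of $\tfrac18\delta_K\no{\bs_K}{\bL^2(K)}^2$. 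The middle term yields $2\delta_K\cinv\nu^2h_K^{-2}\no{\nabla\bee}{\bL^2(K)}^2$, and the assumption $\delta_K\le h_K^2/(8\cinv\nu)$ bounds it by $\tfrac14\nu\no{\nabla\bee}{\bL^2(K)}^2$, which is absorbed. The outer terms produce exactly the interpolation sums with $\Delta(\bu-\Ihv\bu)$ and $h_K^{-2}\nabla(\bu-\Ihv\bu)$.

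Summing over $K$ collects at most $\tfrac12$ of each of $\sum_K\delta_K\no{\bs_K}{\bL^2(K)}^2$, $\nu\no{\nabla\bee}{\bL^2(\Omega)}^2$ and $\no{\sigma^{1/2}\bee}{\bL^2(\Omega)}^2$. Moving these to the left and multiplying by $2$ yields \eqref{eq:prelim_ee}. The only delicate bookkeeping is choosing the Young weights so that the constants come out as $8$ and $4$ with the stated definition of $\cinv$. Any mismatch in these numerical constants would only reflect the normalization of the inverse constant, not the structure of the argument.
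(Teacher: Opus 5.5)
Your strategy is the paper's. The paper's proof simply defers to Lemma~1 of \cite{JN13}: test with $(\bee,r)$, cancel the pressure coupling, use \eqref{eq:skew_symm}, split $\Delta\bee$ through $\Ihv\bu$, apply an inverse inequality on the discrete part, and absorb with Young's inequality.

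One step, however, does not close as written: the reaction product. This has nothing to do with the normalization of $\cinv$.
\begin{itemize}
\item Cauchy--Schwarz gives the factor $\no{\sigma}{L^\infty(\Omega)}^{1/2}$, not $\no{\sigma}{L^\infty(\Omega)}$.
\item With your uniform weight $\tfrac18\delta_K\no{\bs_K}{\bL^2(K)}^2$, Young's inequality leaves $2\delta_K\no{\sigma}{L^\infty(\Omega)}\no{\sigma^{1/2}\bee}{\bL^2(K)}^2$.
\item The assumption $\delta_K\le 1/(2\no{\sigma}{L^\infty(\Omega)})$ bounds this only by the full $\no{\sigma^{1/2}\bee}{\bL^2(K)}^2$, not by half of it.
\end{itemize}
So after moving terms to the left, the reaction part of the norm is not controlled with the factor $2$ in front of $A_{\mathrm{spg}}$.

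The repair is to redistribute the weights. Give the reaction product the weight $\tfrac14$:
\[
\delta_K\left|(\sigma\bee,\bs_K)_K\right| \le \delta_K\no{\sigma}{L^\infty(\Omega)}\no{\sigma^{1/2}\bee}{\bL^2(K)}^2 + \tfrac14\delta_K\no{\bs_K}{\bL^2(K)}^2 \le \tfrac12\no{\sigma^{1/2}\bee}{\bL^2(K)}^2 + \tfrac14\delta_K\no{\bs_K}{\bL^2(K)}^2 .
\]
Then split the remaining $\tfrac14$ as $\tfrac18$, $\tfrac1{16}$, $\tfrac1{16}$ among the products with $\Delta(\bu-\Ihv\bu)$, $\nabla\bee$ and $\nabla(\bu-\Ihv\bu)$, respectively. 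This gives:
\begin{itemize}
\item The $\nabla\bee$ product yields $4\delta_K\cinv\nu^2h_K^{-2}\no{\nabla\bee}{\bL^2(K)}^2\le\tfrac12\nu\no{\nabla\bee}{\bL^2(K)}^2$ by \eqref{eq:ass_lem}, in your normalization of the inverse inequality.
\item The two interpolation products come out with coefficients $2$ and $4$, i.e., exactly $4$ and $8$ after multiplying by $2$.
\end{itemize}
Each bound in \eqref{eq:ass_lem} is then used with equality, which is presumably how the constants in \eqref{eq:prelim_ee} were obtained.

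Your way of handling the $\cinv$ versus $\cinv^2$ mismatch in the statement is acceptable. Alternatively, with the standard inverse inequality $\no{\Delta\bv_h}{\bL^2(K)}\le\cinv h_K^{-1}\no{\nabla\bv_h}{\bL^2(K)}$, the same computation reproduces \eqref{eq:prelim_ee} verbatim, with $\cinv^2$. This requires reading the first bound in \eqref{eq:ass_lem} as $h_K^2/(8\cinv^2\nu)$.
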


\begin{proof} The proof follows the proof of \cite[Lemma~1]{JN13}, with only slight technical additions like using 
\eqref{eq:skew_symm}. For this reason, a detailed presentation will be omitted here. 
\end{proof}

As next step, local residuals are defined in the usual way, namely the mesh cell residual 
\[
\br_K\(\bu_h,p_h\) = \left.\(\bff + \nu \Delta \bu_h - \(\bb \cdot \nabla\)\bu_h - \sigma \bu_h - \nabla p_h \)\right|_K
\quad \foralls K \in \mathcal T_h
\]
and the residual at the facets 
\begin{equation}\label{eq:rF}
\br_F\(\bu_h,p_h\) =  \begin{cases}
\jp{\(-\nu \nabla \bu_h + p_h \mathbb I\)}\bn_F, & \mbox{if } F \in \mathcal F_{h,\mathrm{in}},\\
\bg  - \(\nu \nabla \bu_h - p_h \mathbb I\) \bn_F, & \mbox{if } F  \in \mathcal F_h, \ F \subset \overline{\gN},\\
\bzero, &  \mbox{if } F  \in \mathcal F_h, \ F \subset \overline{\gD}.
\end{cases}
\end{equation}

\begin{remark}[Polynomial approximations of coefficients] Strictly speaking, instead of the coefficients $\bff, \bb, \sigma$ one 
would need to use polynomial approximations of these coefficients, so that the arising integrals and norms can be 
computed exactly. Then, the errors committed by these approximations enter the error bound. We decided to neglect 
this technical issue to keep the notations simple and to concentrate in the error bounds below on the 
essential terms. 
\end{remark}

With the notation of the residuals, one obtains, by utilizing the Galerkin orthogonality \eqref{eq:gal_orth}, the weak form 
\eqref{eq:weak-1}--\eqref{eq:weak-2} of the Oseen problem, and integration by parts
\begin{eqnarray}\label{eq:bilin_repr}
\lefteqn{
A_{\mathrm{spg}}\((\bu-\bu_h,p-p_h),(\bv,q)\)}\nonumber\\
& = & A_{\mathrm{spg}}\((\bu-\bu_h,p-p_h),\(\bv-\Ihv\bv,q-\Ihq q\)\) \nonumber\\
& = & \sum_{K\in\mathcal T_h} \(\br_K\(\bu_h,p_h\),\bv-\Ihv\bv\)_K- \(\nabla\cdot\bu_h, q-\Ihq q \)\nonumber\\
&& + \sum_{F\in \mathcal F_h} \(\br_F\(\bu_h,p_h\), \bv-\Ihv\bv\)_F \nonumber \\
&& + \sum_{K\in\mathcal T_h} \(\br_K\(\bu_h,p_h\), \delta_K \((\bb\cdot\nabla)\(\bv-\Ihv\bv\) + \nabla \(q-\Ihq q\)\)\)_K \\
&& - \sum_{K\in\mathcal T_h} \mu_K \(\nabla \cdot \bu_h,\nabla \cdot \(\bv-\Ihv\bv\) \)_K, \nonumber
\end{eqnarray}
for all $(\bv,q) \in \bV\times Q$. 

In the same spirit as in \cite{JN13}, some hypotheses on the relation of certain norms of the interpolation
errors and the error in the norm $\no{(\cdot,\cdot)}{\mathrm{spg}}$ are formulated now. The rationale behind these
hypotheses is that the (unknown) interpolation $(\Ihv\bu, \Ihq p)$ is usually considered to be a very good approximation
of the solution $(\bu,p)$ of \eqref{eq:weak-1}--\eqref{eq:weak-2} and consequently that interpolation errors are usually (much) smaller than 
errors committed by the solution $(\bu_h,p_h)$ of the discrete problem \eqref{eq:oseen_SUPG_PSPG_graddiv}--\eqref{eq:l_spg}.

\begin{hypo}[Relations of the interpolation error and the error in the norm $\no{(\cdot,\cdot)}{\mathrm{spg},p}$]
\label{hypo:errors}
Let $(\bu,p)$ the solution of \eqref{eq:weak-1}--\eqref{eq:weak-2} and  $(\bu_h,p_h)$ be the solution of the  SUPG/PSPG/grad-div problem \eqref{eq:oseen_SUPG_PSPG_graddiv}--\eqref{eq:l_spg}.
Assume that $(\bu,p)$ is sufficiently smooth, at least $\bu \in \bH^{k+1}(\Omega)$ and $p\in H^k(\Omega)$. 

First, it will be assumed that 
\begin{equation}
\sum_{K\in\mathcal T_h} \delta_K^{-1} \no{\bu-\Ihv\bu}{L^2(K)}^2 \le  2  \no{\(\bu-\bu_h,p-p_h\)}{\mathrm{spg}}^2.
\label{eq:L2_delta}
\end{equation}
The discussion of this hypothesis and all the followings is based on the special case of a  family of uniform triangulations. 
Consider on the one hand the optimal choices of $\delta_K$ and the corresponding orders of convergence 
discussed in Remark~\ref{rem:para_choice} and, on the other hand, the interpolation estimate \eqref{eq:local_int}
for $s=0$ and smooth functions. Then one finds that both sides of \eqref{eq:L2_delta} are of the same order. 
Thus, the actual hypothesis is the factor $2$ on the right-hand side. 

An assumption of a similar spirit as \eqref{eq:L2_delta} is 
\begin{equation}
\sum_{K\in\mathcal T_h} \delta_K^\alpha \no{\nabla\(\bu-\Ihv\bu\)}{L^2(K)}^2 \le  2  \no{\(\bu-\bu_h,p-p_h\)}{\mathrm{spg}}^2
\label{eq:H1_delta}
\end{equation}
with 
\begin{equation}\label{eq:alpha}
\alpha = \begin{cases} 1 & \mbox{if } \nu <  h \mbox{ and equal order spaces are used,}\\
0 & \mbox{else.}
\end{cases}
\end{equation}
Comparing here the asymptotics of both sides, then one finds again that the orders are the same.

The third assumption reads as follows
\begin{equation}
\sum_{F\in\mathcal F_h} \no{\bb}{L^\infty(F)}^2 \no{\bu-\Ihv\bu}{L^2(F)}^2 \le  2  \no{\(\bu-\bu_h,p-p_h\)}{\mathrm{spg}}^2.
\label{eq:L2_F_b}
\end{equation} 
With the trace inequality \eqref{eq:trace} and the convergence orders discussed in Remark~\ref{rem:para_choice}, it 
can be shown that term on the left-hand side of \eqref{eq:L2_F_b} is of order $2k+1$, whereas the 
term on the right-hand side is of order $2k$ or $2k+1$, depending on the concrete situation, compare Remark~\ref{rem:para_choice}.
Concerning this assumption, notice that the corresponding hypothesis in \cite{JN13} contains the factor 
$\no{\bb}{L^\infty(F)}$ on the left-hand side. However, since the convection field appears quadratically on 
the right-hand side, we think it more appropriate from the scaling point of view that it appears quadratically
on the left-hand side, too. 

Next, it will be assumed that 
\begin{equation}
\sum_{K\in\mathcal T_h} \delta_K  \no{\(\bb\cdot \nabla\) \(\bu-\Ihv\bu\) + \nabla \(p-\Ihq p\)}{L^2(K)}^2 \le  2 \no{\(\bu-\bu_h,p-p_h\)}{\mathrm{spg}}^2.
\label{eq:res_delta}
\end{equation}
In the case of inf-sup stable pairs of finite element spaces with order $k$ for the velocity and $k-1$ for the pressure, 
both sides of \eqref{eq:res_delta} are of the same order $2k$ if the optimal asymptotic choice \eqref{eq:para_inf_sup} of $\delta_K$ 
is used. Notice that both interpolation operators can be chosen independently, so that one can consider the order 
of the interpolation errors for both terms on the left-hand side separately. Performing the same considerations for 
equal order pairs of spaces and the corresponding optimal asymptotic choice \eqref{eq:para_equal} of $\delta_K$, 
one can see that both sides of \eqref{eq:res_delta} are also of the same order, provided that $p \in H^{k+1}(\Omega)$.

Additionally, an assumption for the interpolation error of the pressure is needed. Since the pressure error does not 
appear as a separate term in the error measured in the norm $\no{(\cdot,\cdot)}{\mathrm{spg}}$, the pressure interpolation 
cannot be related directly to the error in this norm. Instead, the pressure interpolation will be related to the velocity
interpolation. Consider first the case of equal order finite element spaces and assume that $p \in H^{k+1}(\Omega)$.
In this case, both interpolation operators map to a finite element space with the same polynomials. On the one hand, 
the interpolation operators are not identical, since $\Ihv$ has to obey Dirichlet boundary conditions and 
if $\partial\Omega =\gD$ then $\Ihq$ has to map to a function with vanishing integral mean value. But on the other hand,
it can be expected that generally the velocity interpolation error is not much worse than the pressure interpolation
error. Thus, it will be assumed that 
\begin{equation}\label{eq:pressure_hypo}
\no{p-\Ihq p}{L^2(\Omega)} \le 2 \no{\bu-\Ihv\bu}{L^2(\Omega)}.
\end{equation}
Of course, there are special situations, e.g., if $\bu \in\bV_h, p\not\in Q_h$, where this assumption does not hold.
Inserting \eqref{eq:pressure_hypo} in \eqref{eq:L2_delta} leads to
\begin{equation}\label{eq:pressure_hypo_equal}
\no{p-\Ihq p}{L^2(\Omega)}^2 \le 4 \max_{K \in \mathcal T_h} \{\delta_K\} \sum_{K\in\mathcal T_h} \delta_K^{-1} \no{\bu-\Ihv\bu}{L^2(K)}^2 \le  8  \max_{K \in \mathcal T_h} \{\delta_K\}  \no{\(\bu-\bu_h,p-p_h\)}{\mathrm{spg}}^2.
\end{equation}
In the case of inf-sup stable pairs of spaces, the pressure interpolator maps to a finite element space with polynomials 
of one degree less than contained in the velocity finite element space. Taking this issue into account and assuming 
again that the velocity interpolation error is not much worse than the pressure interpolation error leads to the 
hypotheses
\begin{equation}\label{eq:pressure_hypo_00}
\no{p-\Ihq p}{L^2(\Omega)} \le 2 h^{-1} \no{\bu-\Ihv\bu}{L^2(\Omega)},
\end{equation}
which gives with  \eqref{eq:L2_delta}
\begin{equation}\label{eq:pressure_hypo_infsup}
\no{p-\Ihq p}{L^2(\Omega)}^2 \le   \frac{8}{h^2}  \max_{K \in \mathcal T_h} \{\delta_K\}  \no{\(\bu-\bu_h,p-p_h\)}{\mathrm{spg}}^2.
\end{equation}
The hypotheses \eqref{eq:pressure_hypo} and \eqref{eq:pressure_hypo_00} are based on the assumption that the, 
usually unknown, velocity and pressure components of the solution are of a similar complexity. If it is known
a priori that this is not the case, then the constants on the right-hand sides of  \eqref{eq:pressure_hypo} and \eqref{eq:pressure_hypo_00} 
could be adapted, one direction or the other. 
\end{hypo}

\begin{theorem}[A posteriori error estimate, upper bound]\label{thm:upper_bound}
Let $(\bu,p)$ be a sufficiently smooth solution of  \eqref{eq:weak-1}--\eqref{eq:weak-2}
and let $(\bu_h,p_h)$ be the solution of the  SUPG/PSPG/grad-div problem \eqref{eq:oseen_SUPG_PSPG_graddiv}--\eqref{eq:l_spg}.
Assume \eqref{eq:ass_lem}. Under the hypotheses stated in Remark~\ref{hypo:errors}, the following estimate holds 
\begin{eqnarray}\label{eq:est}
\lefteqn{\no{\(\bu-\bu_h,p-p_h\)}{\mathrm{spg}}^2} \nonumber \\
&\le& \eta_{K,\mathrm{res}}^2 + \eta_{K,\mathrm{div}}^2 + \eta_F^2 + \eta_{K,\delta}^2 + \eta_{K,\mu}^2 
+ 16 \sum_{K\in \mathcal T_h} \delta_K \cinv^2 \nu^2h_K^{-2} \no{\nabla\(\bu-\Ihv\bu\)}{L^2(K)}^2\nonumber\\
&& + 8  \sum_{K\in \mathcal T_h} \delta_K \nu^2  \no{\Delta\(\bu-\Ihv\bu\)}{L^2(K)}^2,
\end{eqnarray}
where
\begin{eqnarray*}
\eta_{K,\mathrm{res}}^2 & = &  \min\left\{ \frac{C}{\sigma_0},\frac{Ch_K^2}{\nu} ,  40 \delta_K   \right\}
\no{\br_K(\bu_h,p_h)}{L^2(K)}^2, \\
\eta_{K,\mathrm{div}}^2 & = &
\begin{cases} 160 \max_{K \in \mathcal T_h} \{\delta_K\} \no{\nabla\cdot\bu_h}{L^2(\Omega)}^2, & \mbox{for equal order pairs of spaces,}\\
\frac{160}{h^2} \max_{K \in \mathcal T_h} \{\delta_K\} \no{\nabla\cdot\bu_h}{L^2(\Omega)}^2, & \mbox{for inf-sup stable pairs of spaces,}\\
\end{cases} \\
\eta_F^2 & = & \min\left\{\frac{Ch_K}{\nu},\frac{C}{h_K\sigma_0},  \frac{C}{\nu^{1/2}\sigma_0^{1/2}}, \frac{40}{\no{\bb}{L^\infty(F)}^2}
\right\} \no{\br_F\(\bu_h,p_h\)}{L^2(F)}^2, \\
\eta_{K,\delta}^2 & = & 40 \sum_{K\in\mathcal T_h} \delta_K \no{\br_K\(\bu_h,p_h\)}{L^2(K)}^2,  \\ 
\eta_{K,\mu}^2 & = & \min\left\{\frac{40 d \mu_K^2}{\delta_K^\alpha},  \frac{C d \mu_K^2}{\nu}\right\} \no{\nabla \cdot \bu_h}{L^2(K)}^2 
\end{eqnarray*}
with $\alpha$ defined in \eqref{eq:alpha} and positive constants $C$, which might be different in the different expressions, 
but which are independent of $h$, the coefficients of the problem, and the stabilization parameters. 
\end{theorem}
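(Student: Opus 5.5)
The plan is to start from the preliminary estimate \eqref{eq:prelim_ee} and bound the term $2A_{\mathrm{spg}}\((\bu-\bu_h,p-p_h),(\bu-\bu_h,p-p_h)\)$. The two interpolation remainder terms in \eqref{eq:prelim_ee} appear in \eqref{eq:est} with doubled constants $16$ and $8$. This suggests the following: bound $2A_{\mathrm{spg}}(\cdot,\cdot)$ by a sum of residual terms plus $\tfrac12\no{(\bu-\bu_h,p-p_h)}{\mathrm{spg}}^2$, absorb that half into the left-hand side, and multiply the resulting inequality by $2$. To this end, I would insert $(\bv,q)=(\bu-\bu_h,p-p_h)$ into the representation \eqref{eq:bilin_repr}. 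Since $\Ihv\bu_h=\bu_h$ and $\Ihq p_h=p_h$ (the interpolants are projections onto the finite element spaces), the test functions become $\bu-\Ihv\bu$ and $p-\Ihq p$. The result is five terms: cell residual, divergence--pressure, facet residual, SUPG/PSPG residual, and grad-div. Each is estimated by Cauchy--Schwarz and Young's inequality with weight $\varepsilon$. I would choose $\varepsilon$ so that each term contributes at most $\tfrac1{10}\no{(\bu-\bu_h,p-p_h)}{\mathrm{spg}}^2$ after the hypotheses are applied. This explains the recurring factor $40=2\cdot2\cdot10$ (the factor $2$ in the hypotheses times the Young constant) and the factor $160$.

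The individual terms are handled as follows.

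\textbf{Cell residual term.} Write $(\br_K,\bu-\Ihv\bu)_K\le \delta_K^{1/2}\no{\br_K}{}\,\delta_K^{-1/2}\no{\bu-\Ihv\bu}{}$ and use \eqref{eq:L2_delta}. This gives the $40\delta_K$ alternative. The alternatives $C/\sigma_0$ and $Ch_K^2/\nu$ follow from bounding $\no{\bu-\Ihv\bu}{L^2(K)}$ by $\sigma_0^{-1/2}\no{\sigma^{1/2}(\bu-\Ihv\bu)}{}$, respectively by $Ch_K\no{\nabla(\cdot)}{}$ via \eqref{eq:local_int} with $t=1$. Here I would use the local interpolant stability to compare with the error norm terms $\no{\sigma^{1/2}\bee}{}$ and $\nu\no{\nabla\bee}{}^2$, in the spirit of \cite{JN13}. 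Since the spg-norm is taken elementwise, the minimum can be taken cell by cell.

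\textbf{Divergence--pressure term.} Apply Cauchy--Schwarz globally to $(\nabla\cdot\bu_h,p-\Ihq p)$, then use \eqref{eq:pressure_hypo_equal} or \eqref{eq:pressure_hypo_infsup}. Young's inequality then yields $\eta_{K,\mathrm{div}}$ with the factor $160=20\cdot 8$.

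\textbf{SUPG/PSPG term.} Cauchy--Schwarz with $\delta_K$ together with \eqref{eq:res_delta} gives $\eta_{K,\delta}$.

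\textbf{Grad-div term.} Use $\no{\nabla\cdot\bw}{}\le\sqrt d\no{\nabla\bw}{}$ and then either \eqref{eq:H1_delta}, which gives $40d\mu_K^2\delta_K^{-\alpha}$, or the $\nu\no{\nabla\cdot}{}^2$ part of the norm, which gives $Cd\mu_K^2/\nu$.

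\textbf{Facet residual term.} I expect this step to be the main obstacle, because four alternatives must be justified. The $40/\no{\bb}{L^\infty(F)}^2$ alternative comes directly from multiplying and dividing by $\no{\bb}{L^\infty(F)}$ and using \eqref{eq:L2_F_b}. The others come from the trace inequality \eqref{eq:trace} applied to $\bu-\Ihv\bu$ on a neighbouring cell, combined with \eqref{eq:local_int}:
\begin{itemize}[leftmargin=2em]
\item bounding both $L^2$ and gradient parts by $h_K^{1/2}\no{\nabla\cdot}{}$ gives $Ch_K/\nu$;
\item using only $L^2$ parts weighted by $\sigma_0$ gives $C/(h_K\sigma_0)$;
\item the mixed product $\no{\cdot}{}^{1/2}\no{\nabla\cdot}{}^{1/2}$ with $\sigma_0^{1/2}$ and $\nu^{1/2}$ weights gives $C/(\nu\sigma_0)^{1/2}$.
\end{itemize}
Here one must control $\no{\bu-\Ihv\bu}{}$ and $\no{\nabla(\bu-\Ihv\bu)}{}$ on patches by the corresponding error quantities, as in the robust estimates of \cite{Ver98,JN13}. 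The finite overlap of patches, guaranteed by shape regularity, is absorbed into $C$.

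Finally, summing all the bounds gives $2A_{\mathrm{spg}}\le \sum\eta^2/2+\tfrac12\no{(\bu-\bu_h,p-p_h)}{\mathrm{spg}}^2$ (after accounting for the factor $2$). Inserting this into \eqref{eq:prelim_ee}, absorbing the half-norm, and multiplying by $2$ yields \eqref{eq:est}.
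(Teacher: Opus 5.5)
Your plan follows the paper's proof. You insert the error into \eqref{eq:bilin_repr}, use \eqref{eq:err_inter} to turn the test functions into $\bu-\Ihv\bu$ and $p-\Ihq p$, bound each of the five terms by a residual quantity plus $\frac1{20}\no{(\bu-\bu_h,p-p_h)}{\mathrm{spg}}^2$, absorb the accumulated $\frac12$ and multiply by $2$. This is exactly where the constants $16$, $8$, $40$ and $160$ come from, and your treatment of the cell-residual, divergence--pressure, SUPG/PSPG and grad-div terms essentially matches the paper's. (A minor point the paper also glosses over: if the minimum is taken cellwise between a hypothesis-based bound and local bounds, both consume a share of the norm. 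The explicit constants then need slightly more slack than $\frac1{20}$ per term.)

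\textbf{The step that fails.} In the facet term, your second bullet, ``using only $L^2$ parts weighted by $\sigma_0$ gives $C/(h_K\sigma_0)$'', does not work.
\begin{itemize}
\item The trace inequality \eqref{eq:trace} for $q=\bee-\Ihv\bee$, $\bee=\bu-\bu_h$, always contains the product $\no{q}{L^2(K)}\no{\nabla q}{L^2(K)}$.
\item The only part of the spg-norm that controls the full gradient is $\nu^{1/2}\no{\nabla\bee}{L^2(K)}$. The SUPG term sees only $(\bb\cdot\nabla)\bee+\nabla(p-p_h)$, and the grad-div term only the divergence.
\end{itemize}
With $\beta_K=\min\{h_K\nu^{-1/2},\sigma_0^{-1/2}\}$, the trace inequality and \eqref{eq:local_int} therefore give
\[
\no{q}{L^2(F)}^2\le C\left(h_K^{-1}\beta_K^2+\nu^{-1/2}\beta_K\right)\no{(\bu-\bu_h,p-p_h)}{\mathrm{spg}(K)}^2\le C\nu^{-1/2}\beta_K\no{(\bu-\bu_h,p-p_h)}{\mathrm{spg}(K)}^2 .
\]
This is the weight $\min\{Ch_K/\nu,\,C(\nu\sigma_0)^{-1/2}\}$, and this route gives nothing better. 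Since $(\nu\sigma_0)^{-1/2}$ is the geometric mean of $h_K/\nu$ and $1/(h_K\sigma_0)$, the entry $1/(h_K\sigma_0)$ is the smallest exactly when $\nu^{1/2}<h_K\sigma_0^{1/2}$. In that regime the product term is only bounded by the strictly larger $(\nu\sigma_0)^{-1/2}$.

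\textbf{The paper does not close this either.} Its chain ends with
\[
h_K^{-1/2}\beta_K+\nu^{-1/4}\beta_K^{1/2}\le C\min\{h_K^{1/2}\nu^{-1/2},h_K^{-1/2}\sigma_0^{-1/2},\nu^{-1/4}\sigma_0^{-1/4}\}.
\]
This has the form $\min\{a,b\}+\min\{a,c\}\le C\min\{a,b,c\}$ with $c=\sqrt{ab}$. It cannot hold with a $\nu$-independent $C$ once $b<a$, because the ratio of the two sides is then $1+\sqrt{a/b}$.

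\textbf{What can be proved, and a possible fix.} With the tools you list, you can prove \eqref{eq:est} only with $C/(h_K\sigma_0)$ removed from the minimum in $\eta_F$. To keep that entry you need three things:
\begin{itemize}
\item a $\nu$-independent control of $\nabla(\bu-\Ihv\bu)$, e.g.\ from hypothesis \eqref{eq:H1_delta};
\item the trace inequality in the form $\no{q}{L^2(F)}^2\le C(h_K^{-1}\no{q}{L^2(K)}^2+h_K\no{\nabla q}{L^2(K)}^2)$;
\item an extra condition such as $h_K^2\sigma_0\le C\delta_K^\alpha$.
\end{itemize}
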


\begin{proof} Setting $(\bv,q) = (\bu-\bu_h,p-p_h)$ in \eqref{eq:bilin_repr}, observing that 
\begin{equation}\label{eq:err_inter}
\(\bu-\bu_h\) - \Ihv\(\bu-\bu_h\) = \bu-\Ihv\bu, \quad \(p-p_h\) - \Ihq  \(p-p_h\) = p - \Ihq p,
\end{equation}
and inserting the result in \eqref{eq:prelim_ee}  yields 
\begin{eqnarray}\label{eq:start_est}
\lefteqn{\no{\(\bu-\bu_h,p-p_h\)}{\mathrm{spg}}^2} \nonumber \\
&\le&
2 \sum_{K\in\mathcal T_h} \(\br_K\(\bu_h,p_h\),\bu-\Ihv\bu\)_K- 2\(\nabla\cdot\bu_h, p-\Ihq p \)\nonumber\\
&& + 2\sum_{F\in \mathcal F_h} \(\br_F\(\bu_h,p_h\), \bu-\Ihv\bu\)_F \nonumber \\
&& + 2\sum_{K\in\mathcal T_h} \(\br_K\(\bu_h,p_h\), \delta_K \((\bb\cdot\nabla)\(\bu-\Ihv\bu\) + \nabla \(p-\Ihq p\)\)\)_K \nonumber \\
&& - 2 \sum_{K\in\mathcal T_h} \mu_K \(\nabla \cdot \bu_h,\nabla \cdot \(\bu-\Ihv\bu\) \)_K \nonumber\\
&& + 8 \sum_{K\in \mathcal T_h} \delta_K \cinv^2 \nu^2h_K^{-2} \no{\nabla\(\bu-\Ihv\bu\)}{L^2(K)}^2\nonumber\\
&& + 4  \sum_{K\in \mathcal T_h} \delta_K \nu^2  \no{\Delta\(\bu-\Ihv\bu\)}{L^2(K)}^2.
\end{eqnarray}
The terms on the right-hand side will be estimated separately, some of them in various ways. 

For the first term on the right-hand side of \eqref{eq:start_est}, one obtains by using the 
Cauchy--Schwarz inequality, \eqref{eq:err_inter}, \eqref{eq:local_int} with $s=t=0$, and Young's inequality 
\[
\sum_{K\in\mathcal T_h} \(\br_K\(\bu_h,p_h\),\bu-\Ihv\bu\)_K \le  \sum_{K\in\mathcal T_h} \frac{C}{\sigma_0}
\no{\br_K}{L^2(K)}^2 + \frac1{20} \no{\sigma^{1/2}\(\bu-\bu_h\)}{L^2(\Omega)}^2.
\]
Using a similar way, with $s=0$, $t=1$ in \eqref{eq:local_int}, leads to 
\[
\sum_{K\in\mathcal T_h} \(\br_K\(\bu_h,p_h\),\bu-\Ihv\bu\)_K \le \sum_{K\in\mathcal T_h} \frac{Ch_K^2}{\nu} 
\no{\br_K}{L^2(K)}^2 + \frac{\nu}{20} \no{\nabla\(\bu-\bu_h\)}{L^2(\Omega)}^2.
\]
A third estimate can be performed by utilizing hypothesis \eqref{eq:L2_delta}, which gives 
\begin{eqnarray*}
\sum_{K\in\mathcal T_h} \(\br_K\(\bu_h,p_h\),\bu-\Ihv\bu\)_K &\le& \sum_{K\in\mathcal T_h} 10 \delta_K \no{\br_K}{L^2(K)}^2 
+ \frac1{40} \sum_{K\in\mathcal T_h} \delta_K^{-1} \no{\bu-\Ihv\bu}{L^2(K)}^2 \\
& \le & \sum_{K\in\mathcal T_h} 10 \delta_K \no{\br_K}{L^2(K)}^2 + \frac1{20} \no{\(\bu-\bu_h,p-p_h\)}{\mathrm{spg}}^2.
\end{eqnarray*}
Collecting these estimates yields
\begin{eqnarray*}
\lefteqn{
\sum_{K\in\mathcal T_h} \(\br_K\(\bu_h,p_h\),\bu-\Ihv\bu\)_K}\\
&\le&\sum_{K\in\mathcal T_h}   \min\left\{ \frac{C}{\sigma_0},\frac{Ch_K^2}{\nu} ,  10 \delta_K   \right\}
\no{\br_K}{L^2(K)}^2  + \frac1{20} \no{\(\bu-\bu_h,p-p_h\)}{\mathrm{spg}}^2.
\end{eqnarray*}

For the second term on the right-hand side of \eqref{eq:start_est} consider first equal order pairs of finite element 
spaces. Then, this term is estimated by the Cauchy--Schwarz inequality, \eqref{eq:pressure_hypo_equal}, and Korn's inequality
\begin{eqnarray*}
- \(\nabla\cdot\bu_h, p-\Ihq p \)
&\le& \no{\nabla\cdot\bu_h}{L^2(\Omega)} \no{p-\Ihq p }{L^2(\Omega)} \\
& \le & 40 \max_{K \in \mathcal T_h} \{\delta_K\} \no{\nabla\cdot\bu_h}{L^2(\Omega)}^2 + 
\frac1{20}  \no{\(\bu-\bu_h,p-p_h\)}{\mathrm{spg}}^2.
\end{eqnarray*}
To derive the estimate for inf-sup stable pairs of spaces, one uses hypothesis \eqref{eq:pressure_hypo_infsup}
to obtain 
\[
- \(\nabla\cdot\bu_h, p-\Ihq p \) \le \frac{40}{h^2} \max_{K \in \mathcal T_h} \{\delta_K\} \no{\nabla\cdot\bu_h}{L^2(\Omega)}^2 + 
\frac1{20}  \no{\(\bu-\bu_h,p-p_h\)}{\mathrm{spg}}^2.
\]

Coming to the third term on the right-hand side of \eqref{eq:start_est}, its estimate starts with the 
Cauchy--Schwarz inequality and \eqref{eq:err_inter}, leading to 
\begin{equation}\label{eq:res_facets}
\sum_{F\in \mathcal F_h} \(\br_F\(\bu_h,p_h\), \bu-\Ihv\bu\)_F
\le  \sum_{F\in \mathcal F_h}  \no{\br_F\(\bu_h,p_h\)}{L^2(F)} \no{\(\bu-\bu_h\) - \Ihv\(\bu-\bu_h\) }{L^2(F)}.
\end{equation}
Denote $\bee_{\bu} = \bu-\bu_h$, then the second term can be estimated by the local trace inequality \eqref{eq:trace},
the interpolation estimate \eqref{eq:local_int}, the shape-regularity of the family of triangulations, 
Young's inequality, and the definition of the norm $\no{\cdot}{\mathrm{spg}}$ 
\begin{eqnarray*}
\lefteqn{ \no{\bee_{\bu} - \Ihv\bee_{\bu}}{L^2(F)}}\\
& \le&
 C \(h_F^{-1/2} \no{\bee_{\bu}- \Ihv\bee_{\bu}}{L^2(K)} + \no{\bee_{\bu}- \Ihv\bee_{\bu}}{L^2(K)}^{1/2} \no{\nabla\( \bee_{\bu}- \Ihv\bee_{\bu}\)}{L^2(K)}^{1/2}\)\\
 & \le &  C  h_K^{-1/2}  \min\left\{\frac{h_K}{\nu^{1/2}},\frac1{\sigma_0^{1/2}} \right\} \(\nu^{1/2}\no{\nabla \bee_{\bu}}{L^2(K)}
 + \no{\sigma^{1/2} \bee_{\bu}} {L^2(K)}\)\\
&& + C \nu^{-1/4}  \min\left\{\frac{h_K}{\nu^{1/2}},\frac1{\sigma_0^{1/2}} \right\}^{1/2} \(\nu^{1/2} \no{\nabla \bee_{\bu}}{L^2(K)}
 + \nu^{1/4}\no{\nabla \bee_{\bu}}{L^2(K)}^{1/2}\no{\sigma^{1/2} \bee_{\bu}}{L^2(K)}^{1/2}\)\\
& \le &   C \( h_K^{-1/2}  \min\left\{\frac{h_K}{\nu^{1/2}},\frac1{\sigma_0^{1/2}} \right\}
+ \nu^{-1/4}  \min\left\{\frac{h_K}{\nu^{1/2}},\frac1{\sigma_0^{1/2}} \right\}^{1/2}\)\\
&& \times \(\nu^{1/2}\no{\nabla \bee_{\bu}}{L^2(K)}
 + \no{\sigma^{1/2} \bee_{\bu}} {L^2(K)}\)\\
&\le &  C   \min\left\{\frac{h_K^{1/2}}{\nu^{1/2}},\frac1{h_K^{1/2}\sigma_0^{1/2}},  \frac1{\nu^{1/4}\sigma_0^{1/4}}
\right\}\no{\(\bu-\bu_h,p-p_h\)}{\mathrm{spg}(K)}.
\end{eqnarray*}
This estimate is inserted in \eqref{eq:res_facets} and Young's inequality is applied to give 
\begin{eqnarray*}
\lefteqn{\sum_{F\in \mathcal F_h} \(\br_F\(\bu_h,p_h\), \bu-\Ihv\bu\)_F}\\
&\le& \sum_{F\in \mathcal F_h} C \min\left\{\frac{h_K}{\nu},\frac1{h_K\sigma_0},  \frac1{\nu^{1/2}\sigma_0^{1/2}}
\right\}
\no{\br_F\(\bu_h,p_h\)}{L^2(F)}^2 + \frac1{20} \no{\(\bu-\bu_h,p-p_h\)}{\mathrm{spg}}^2.
\end{eqnarray*}
Alternatively, applying Young's inequality in \eqref{eq:res_facets} and then utilizing hypothesis \eqref{eq:L2_F_b} leads to 
\begin{eqnarray*}
\lefteqn{\sum_{F\in \mathcal F_h} \(\br_F\(\bu_h,p_h\), \bu-\Ihv\bu\)_F}\\
&\le& \sum_{F\in \mathcal F_h} \frac{10}{\no{\bb}{L^\infty(F)}^2}
\no{\br_F\(\bu_h,p_h\)}{L^2(F)}^2 + \frac1{20} \no{\(\bu-\bu_h,p-p_h\)}{\mathrm{spg}}^2.
\end{eqnarray*}

The SUPG term is estimated simply by using the Cauchy--Schwarz inequality, Young's inequality, and hypothesis 
\eqref{eq:res_delta}
\begin{eqnarray*}
\lefteqn{\sum_{K\in\mathcal T_h} \(\br_K\(\bu_h,p_h\), \delta_K \((\bb\cdot\nabla)\(\bu-\Ihv\bu\) + \nabla \(p-\Ihq p\)\)\)_K}\\
&\le & \sum_{K\in\mathcal T_h} 10 \delta_K \no{\br_K\(\bu_h,p_h\)}{L^2(K)}^2 + \frac1{40}
\sum_{K\in\mathcal T_h} \delta_K \no{(\bb\cdot\nabla)\(\bu-\Ihv\bu\) + \nabla \(p-\Ihq p\)}{L^2(K)}^2 \\
&\le& \sum_{K\in\mathcal T_h} 10 \delta_K \no{\br_K\(\bu_h,p_h\)}{L^2(K)}^2 + \frac1{20}\no{\(\bu-\bu_h,p-p_h\)}{\mathrm{spg}}^2.
\end{eqnarray*}

The term in \eqref{eq:start_est} coming from the grad-div stabilization 
can be estimated by using the Cauchy--Schwarz inequality, 
a standard estimate of the divergence by the gradient, Young's inequality, and
hypothesis \eqref{eq:H1_delta}, so that 
\begin{eqnarray*}
\lefteqn{\sum_{K\in\mathcal T_h} \mu_K \(\nabla \cdot \bu_h,\nabla \cdot \(\bu-\Ihv\bu\) \)_K}\\
& \le & \sum_{K\in\mathcal T_h} \mu_K \no{\nabla \cdot \bu_h}{L^2(K)} \no{\nabla \cdot \(\bu-\Ihv\bu\)}{L^2(K)}\\
& \le & \sum_{K\in\mathcal T_h} \frac{10 d \mu_K^2}{\delta_K^\alpha} \no{\nabla \cdot \bu_h}{L^2(K)}^2 + \frac{1}{40}\sum_{K\in\mathcal T_h} 
\delta_K^\alpha \no{\nabla \(\bu-\Ihv\bu\)}{L^2(K)}^2\\
& \le & \sum_{K\in\mathcal T_h} \frac{10 d \mu_K^2}{\delta_K^\alpha} \no{\nabla \cdot \bu_h}{L^2(K)}^2 + \frac1{20}\no{\(\bu-\bu_h,p-p_h\)}{\mathrm{spg}}^2.
\end{eqnarray*}
Alternatively, one can use \eqref{eq:err_inter}, the estimate of the divergence by the gradient and 
the interpolation property \eqref{eq:local_int} to obtain
\begin{eqnarray*}
\lefteqn{\sum_{K\in\mathcal T_h} \mu_K \(\nabla \cdot \bu_h,\nabla \cdot \(\bu-\Ihv\bu\) \)_K}\\
& \le &\sum_{K\in\mathcal T_h} \mu_K \no{\nabla \cdot \bu_h}{L^2(K)} \no{\nabla \cdot \((\bu-\bu_h) - 
\Ihv\(\bu-\bu_h\)\)}{L^2(K)}\\
& \le &\sum_{K\in\mathcal T_h} C d^{1/2} \mu_K \no{\nabla \cdot \bu_h}{L^2(K)} \no{\nabla (\bu-\bu_h)}{L^2(K)}\\
& \le & \sum_{K\in\mathcal T_h} \frac{C d \mu_K^2}{\nu} \no{\nabla \cdot \bu_h}{L^2(K)}^2 +
\frac1{20}\no{\(\bu-\bu_h,p-p_h\)}{\mathrm{spg}}^2.
\end{eqnarray*}

Inserting the derived bounds for the terms on the right-hand side of \eqref{eq:start_est} in this estimate gives
the statement of the theorem. 
\end{proof}

\begin{remark}[On the last two terms on the right-hand side of \eqref{eq:est}]\label{rem:est_last_two_terms}
The last two terms on the right-hand side of \eqref{eq:est} are of the same form as in the corresponding error bound for the 
scalar convection-diffusion-reaction problem presented in \cite{JN13}. These terms are discussed in 
\cite[Remark~3]{JN13}. Here, this discussion shall not be repeated and it is just notices that these terms can be expected 
to be small in the convection-dominated regime, which comes essentially from the factor $\nu^2$, 
compare also Figure~\ref{fig:smooth-hypos} below. For the 
diffusion-dominated regime, one can apply other techniques than in the present paper to derive global a posteriori
error bounds that do not lead to terms of this form. 
\end{remark}

\begin{remark}[On adaptive mesh refinement] As already mentioned in the introduction, a second goal of utilizing a posteriori error estimators, 
besides providing information about global errors, consists of controlling an adaptive mesh refinement. 
To support the use of an a posteriori error estimator from the point of view of numerical analysis, local
estimates are derived so that the local error is bounded by a local contribution of the error estimator. 
In the case of residual-based estimators, one applies cut-off functions, so-called bubble functions, 
to derive the local estimate, see \cite{Ver94}. The derivation is usually quite technical and very 
lengthy, compare \cite[Section~4]{JN13} for the case of convection-dominated convection-diffusion-reaction 
problems. Since the main goal of the current paper consists of proposing a robust estimator for the 
error $\no{\(\bu-\bu_h,p-p_h\)}{\mathrm{spg}}$, as well as for the sake of brevity, we decided not to 
present a local error analysis. The numerical studies in Section~\ref{sec:layer} will show that 
with the proposed estimator properly refined grids are obtained. 
\end{remark}

\section{Numerical Studies}\label{sec:numres}

This section presents numerical studies that investigate the robustness in the convection-dominated regime, with respect to the $\no{(\cdot,\cdot)}{\mathrm{spg}}$
norm of the error, of the a posteriori error estimator
\begin{equation}\label{eq:eta}
\eta = \left( \sum_{K\in\mathcal T_h} \eta_{K,\mathrm{res}}^2 + \eta_{K,\mathrm{div}}^2 + \eta_F^2 + \eta_{K,\delta}^2 + \eta_{K,\mu}^2 \right)^{1/2},
\end{equation}
where the individual contributions are defined in Theorem~\ref{thm:upper_bound} and always the 
constant $C=1$ was used. Two examples in 
two dimensions are considered. The first one has a smooth solution without layers and the solution
of the second example exhibits boundary layers. Both examples are defined in the unit square. 

Simulations were performed on triangular grids, where on the coarsest grid, 
the unit square was divided with a diagonal from 
bottom left to top right, and then refinement was applied. The equal order pairs $P_1/P_1$, 
$P_2,P_2$, $P_3/P_3$ and the inf-sup stable Taylor--Hood pairs $P_2/P_1$, $P_3/P_2$ were 
included in our numerical studies. The stabilization parameters \eqref{eq:para_inf_sup} for the 
inf-sup stable pairs were chosen to be 
\[
\delta_K = 0.5 h_K^2, \quad \mu_K = 0.5,
\]
and the parameters \eqref{eq:para_equal} for the equal order pairs 
\[
\delta_K = \begin{cases}
0.5  h_K & \mbox{if } \nu < h_K,\\
0.5  h_K^2 & \mbox{if } \nu \ge h_K,
\end{cases} \quad
\mu_K = 0.5  h_K.
\]
The total number of degrees of freedom for velocity (including Dirichlet nodes) and pressure is abbreviated with `DoFs'. All simulations were performed with the code {\em MooNMD}, \cite{JM04}.

\subsection{Smooth solution without layers} \label{sec:smooth}

The right-hand side and the Dirichlet boundary conditions are chosen such that
\begin{eqnarray}\label{eq:smooth_u}
\bu(x,y) & = & \begin{pmatrix}  1000 x^2 (1-x)^4 y^2 (1-y)(3-5y) \\
-2000 x (1-x)^3 (1-3x) y^3 (1-y)^2 \end{pmatrix}, \\
p(x,y) &=& \pi^2 ( x y^3 \cos (2\pi x^2 y) - x^2 y \sin (2\pi x y) )
          + \frac{1}{8} \label{eq:smooth_p}
\end{eqnarray}
is the solution of the Oseen problem \eqref{eq:oseen-strong}, where the convection
field is chosen to be $\bb = \bu$ and the reaction parameter
set to be  $\sigma = 1$. For this example, uniformly refined grids were used. 

Since the solution does not possess layers, one expects from the a priori error 
analysis that the errors $\no{\(\bu-\bu_h,p-p_h\)}{\mathrm{spg}}$ behave robustly, 
i.e., they do not blow up for small values of $\nu$. Exactly this behavior can be 
observed in Figure~\ref{fig:smooth-all-error}. In addition, one can see the optimal 
orders of error reduction for the convection-dominated regime, compare Remark~\ref{rem:para_choice}.

\begin{figure}[t!]
\centerline{\includegraphics[width=0.32\textwidth]{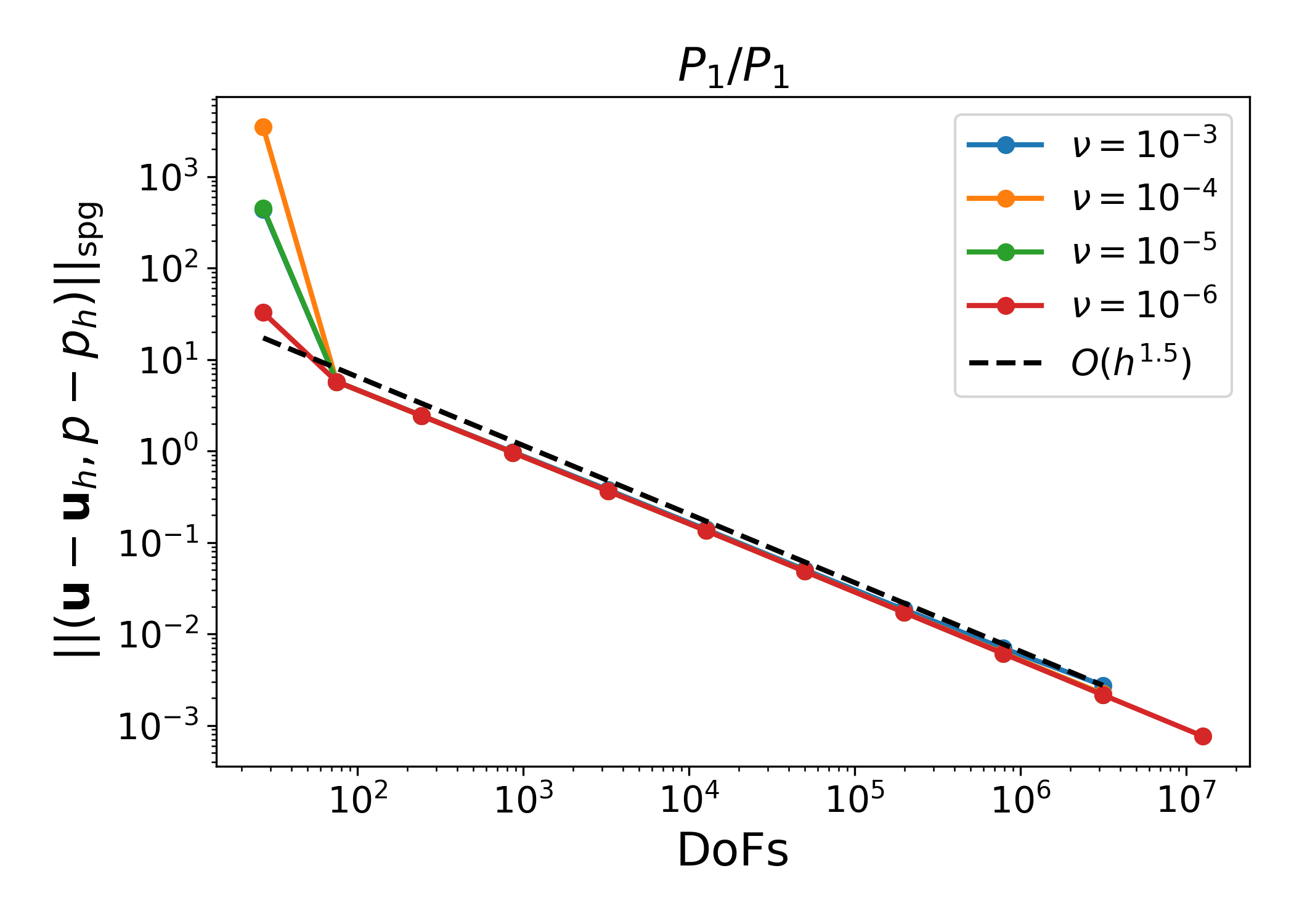}
  \hspace*{0.5em}
  \includegraphics[width=0.32\textwidth]{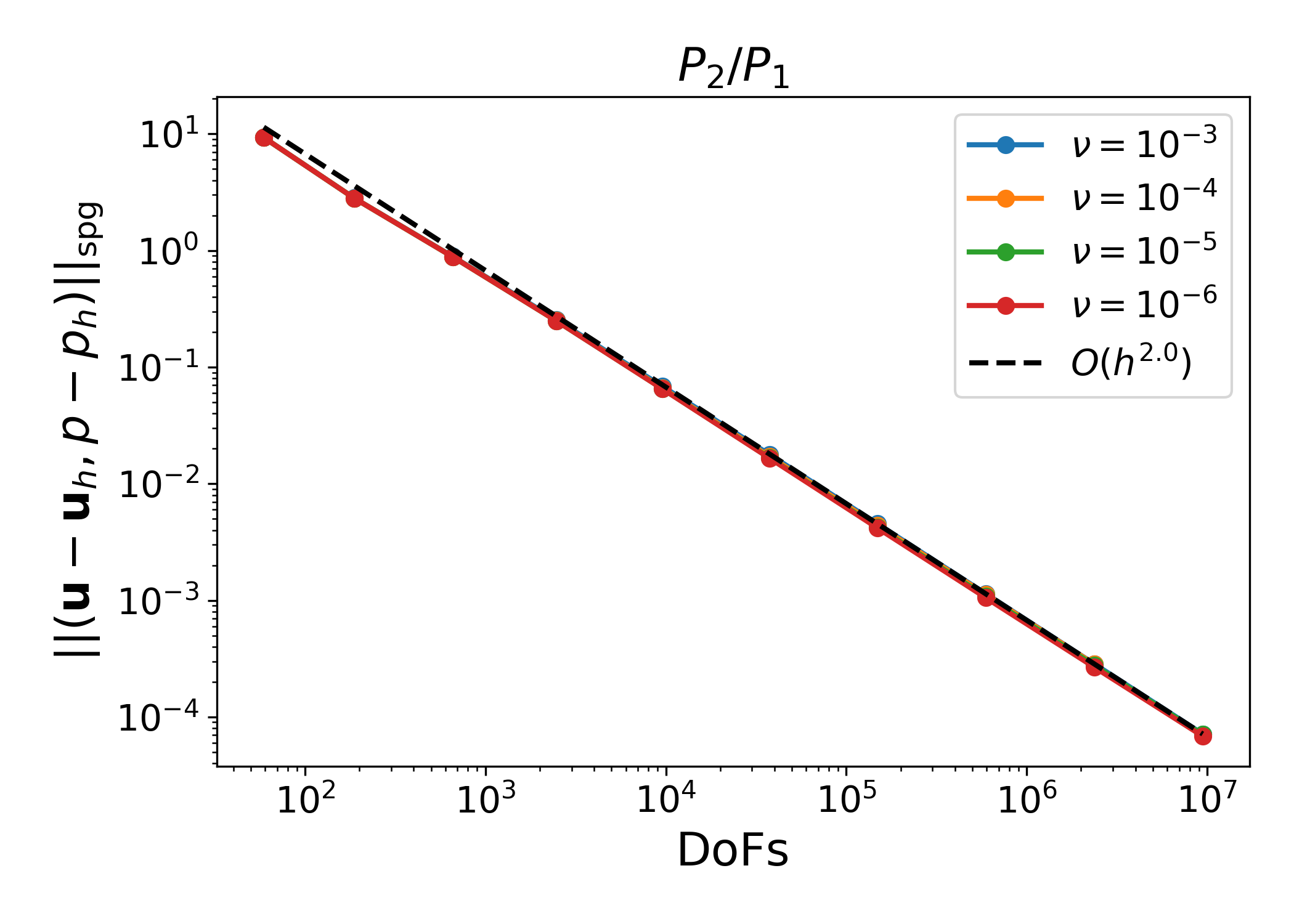}
  \hspace*{0.5em}
  \includegraphics[width=0.32\textwidth]{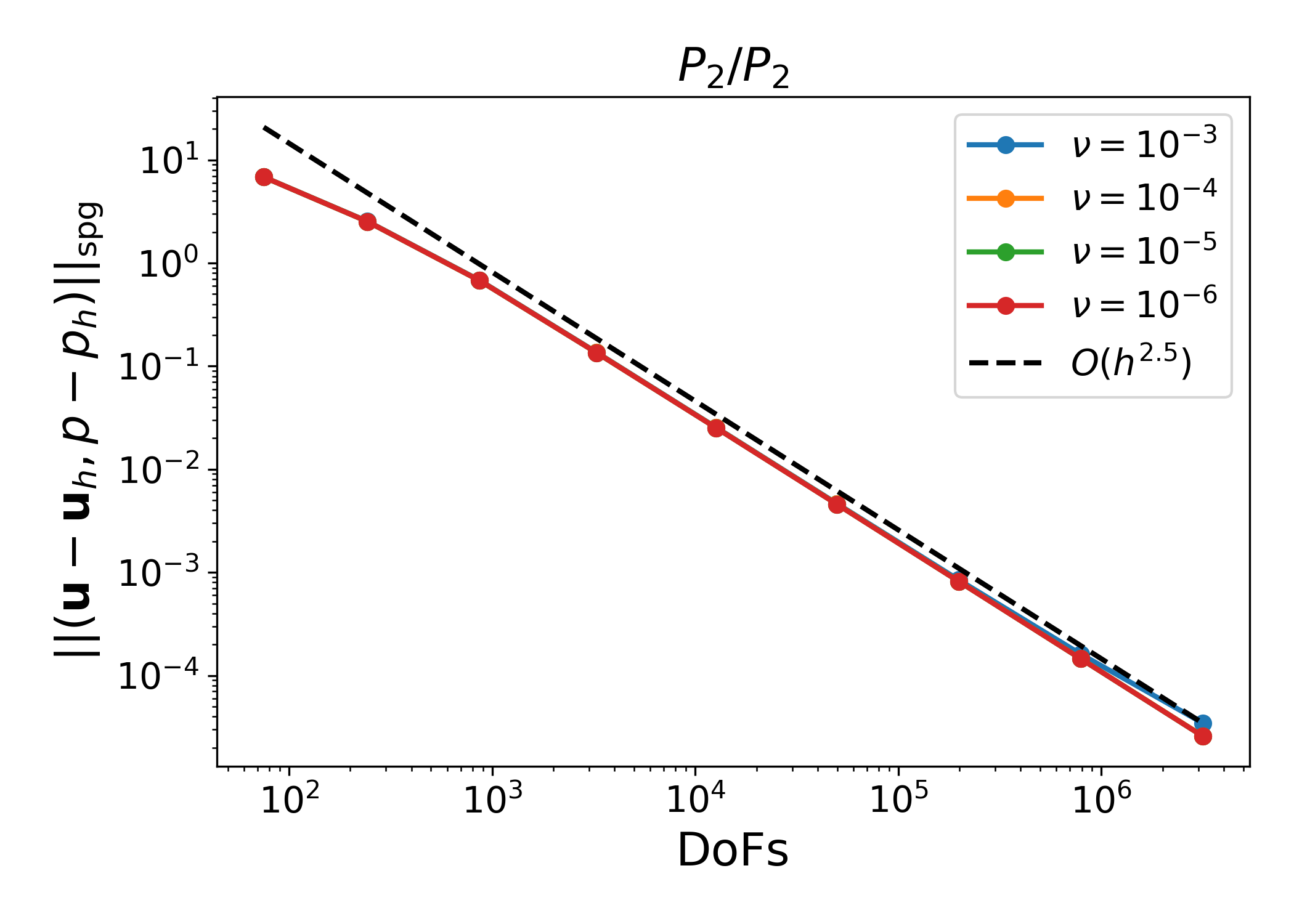}}
\centerline{\includegraphics[width=0.32\textwidth]{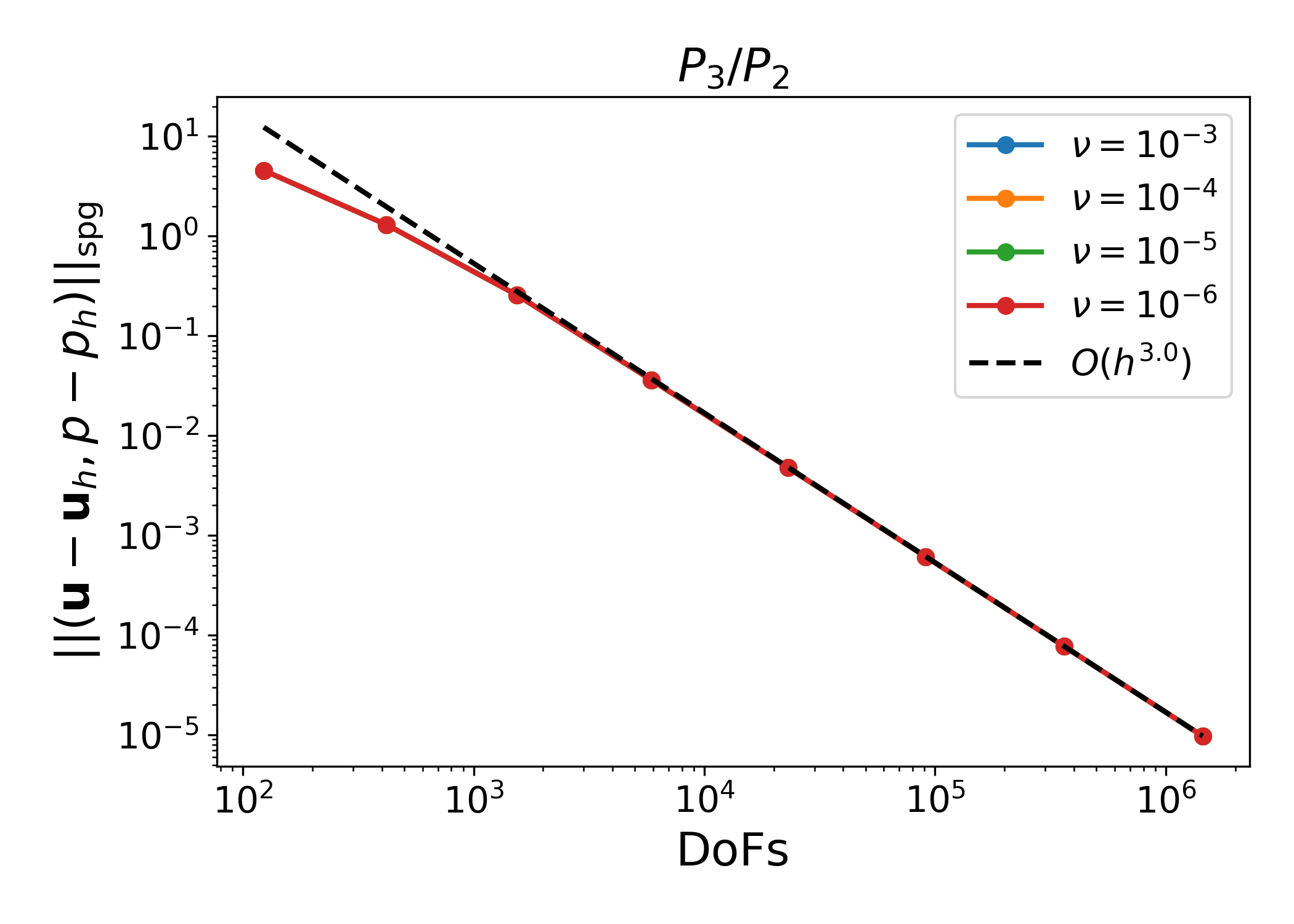}
\hspace*{0.5em}
\includegraphics[width=0.32\textwidth]{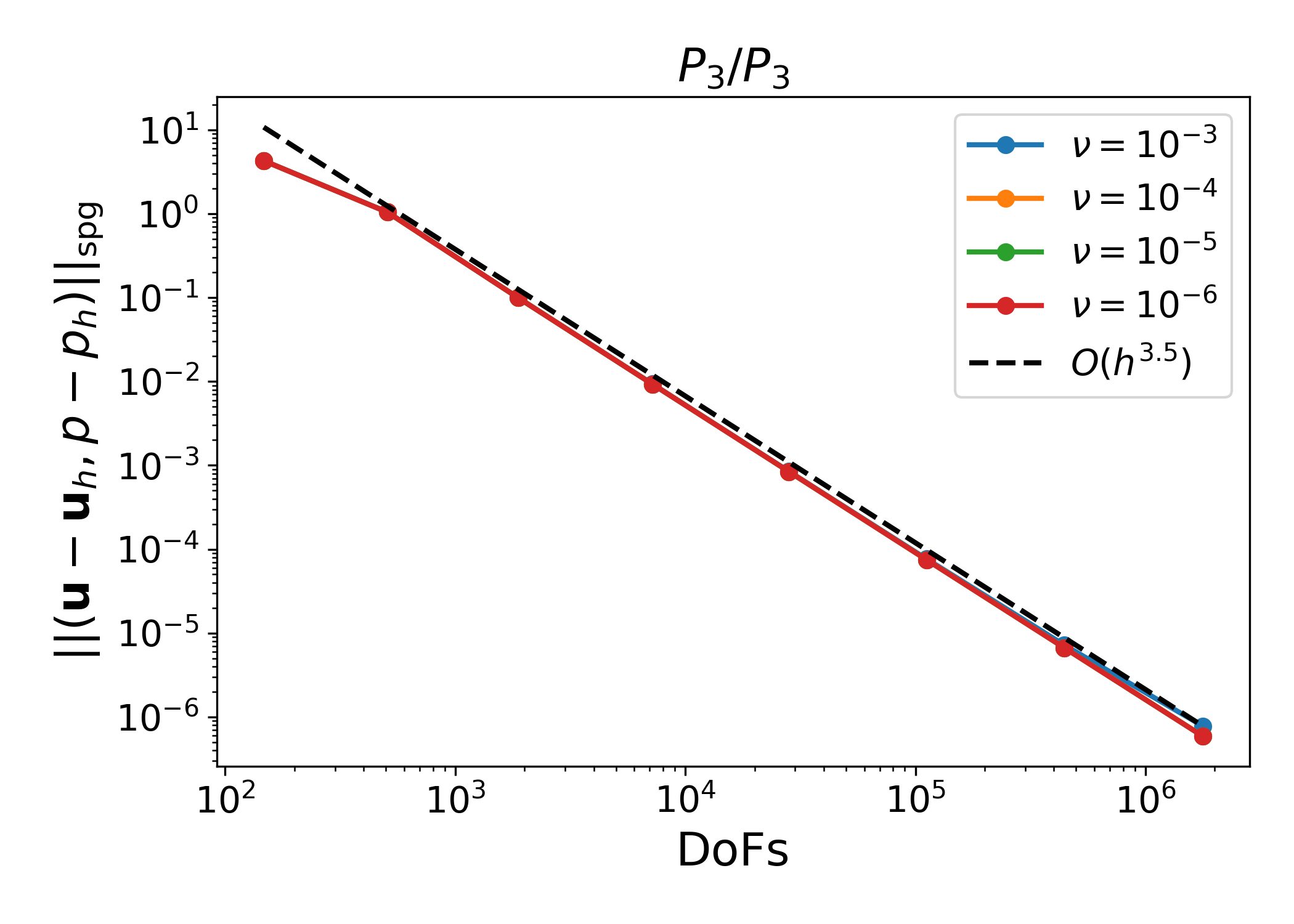}}
  \caption{Example~\ref{sec:smooth}. Errors $\no{\(\bu-\bu_h,p-p_h\)}{\mathrm{spg}}$ 
  for different pairs of finite element spaces and different values of the viscosity coefficient.}
  \label{fig:smooth-all-error}
\end{figure}


\begin{figure}[t!]
\centerline{\includegraphics[width=0.32\textwidth]{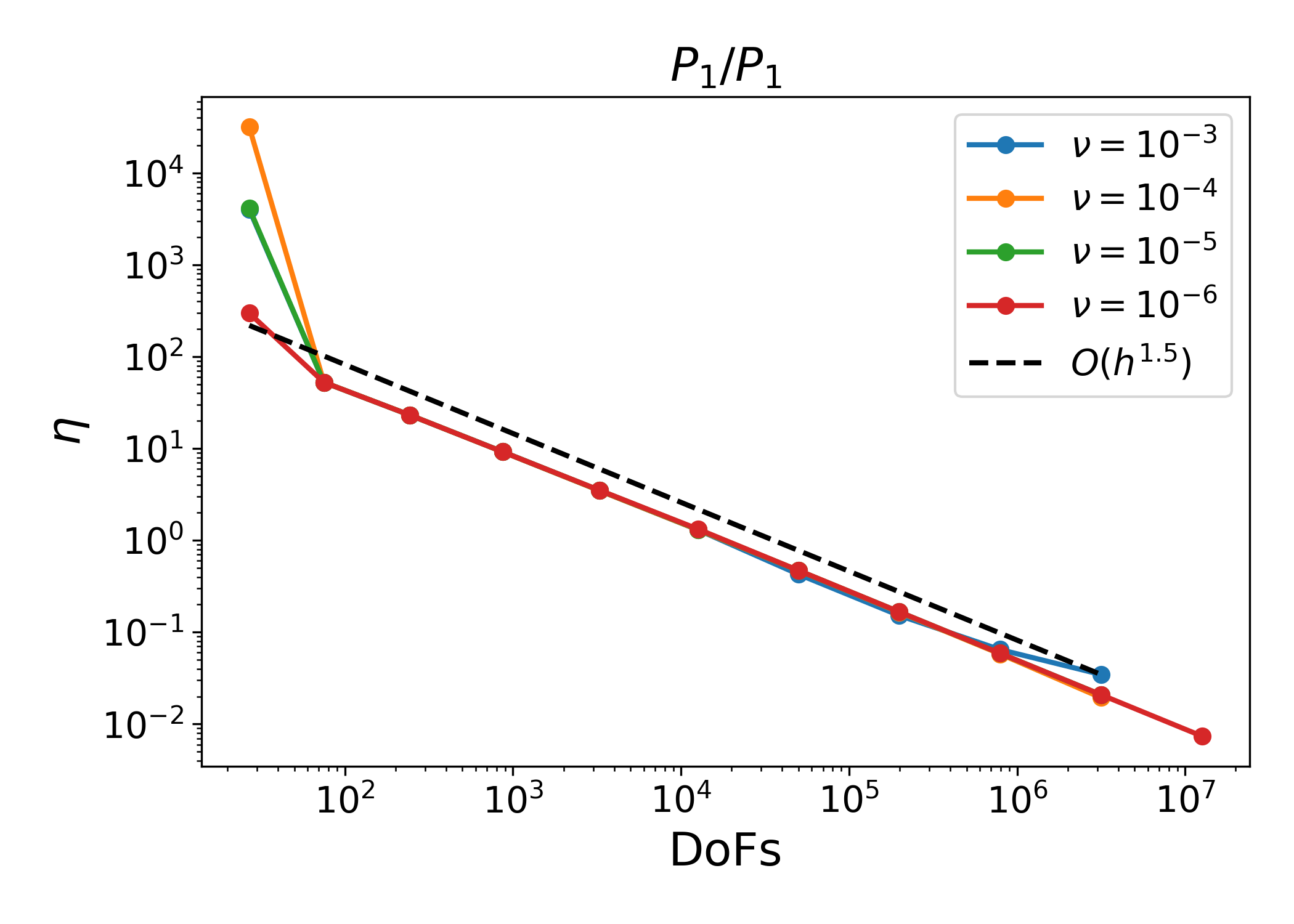}
\hspace*{0.5em}
\includegraphics[width=0.32\textwidth]{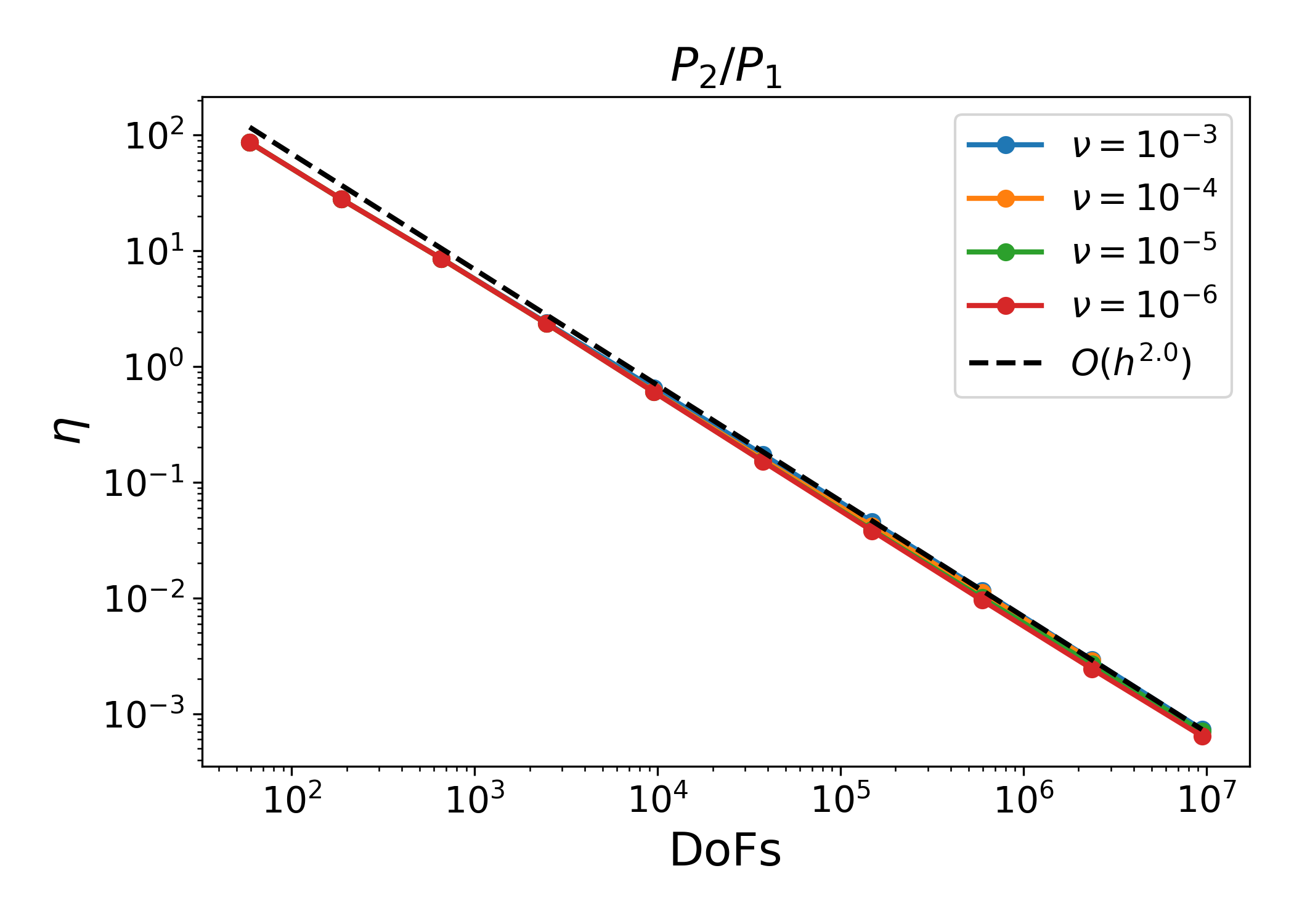}
\hspace*{0.5em}\includegraphics[width=0.32\textwidth]{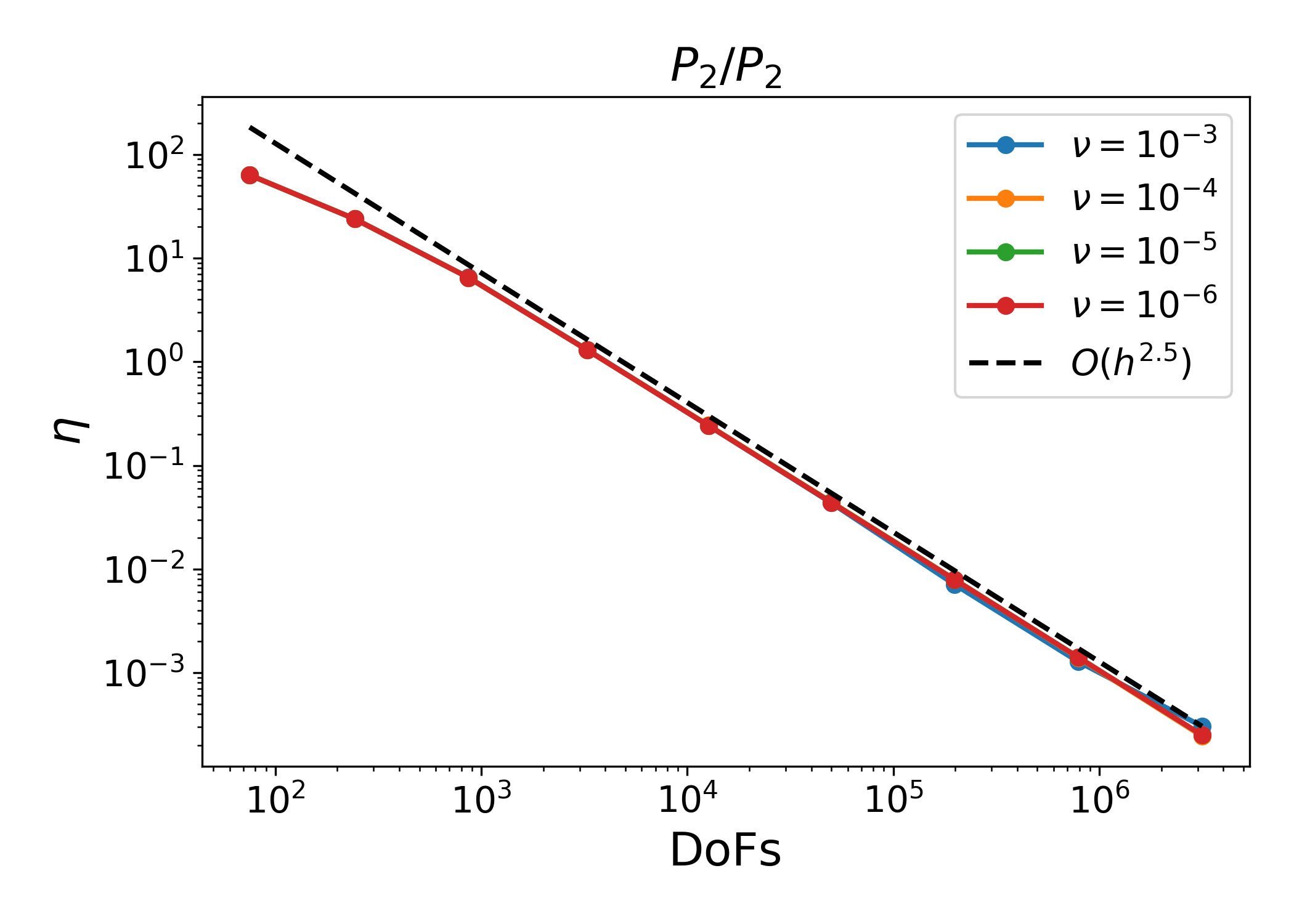}}
\centerline{\includegraphics[width=0.32\textwidth]{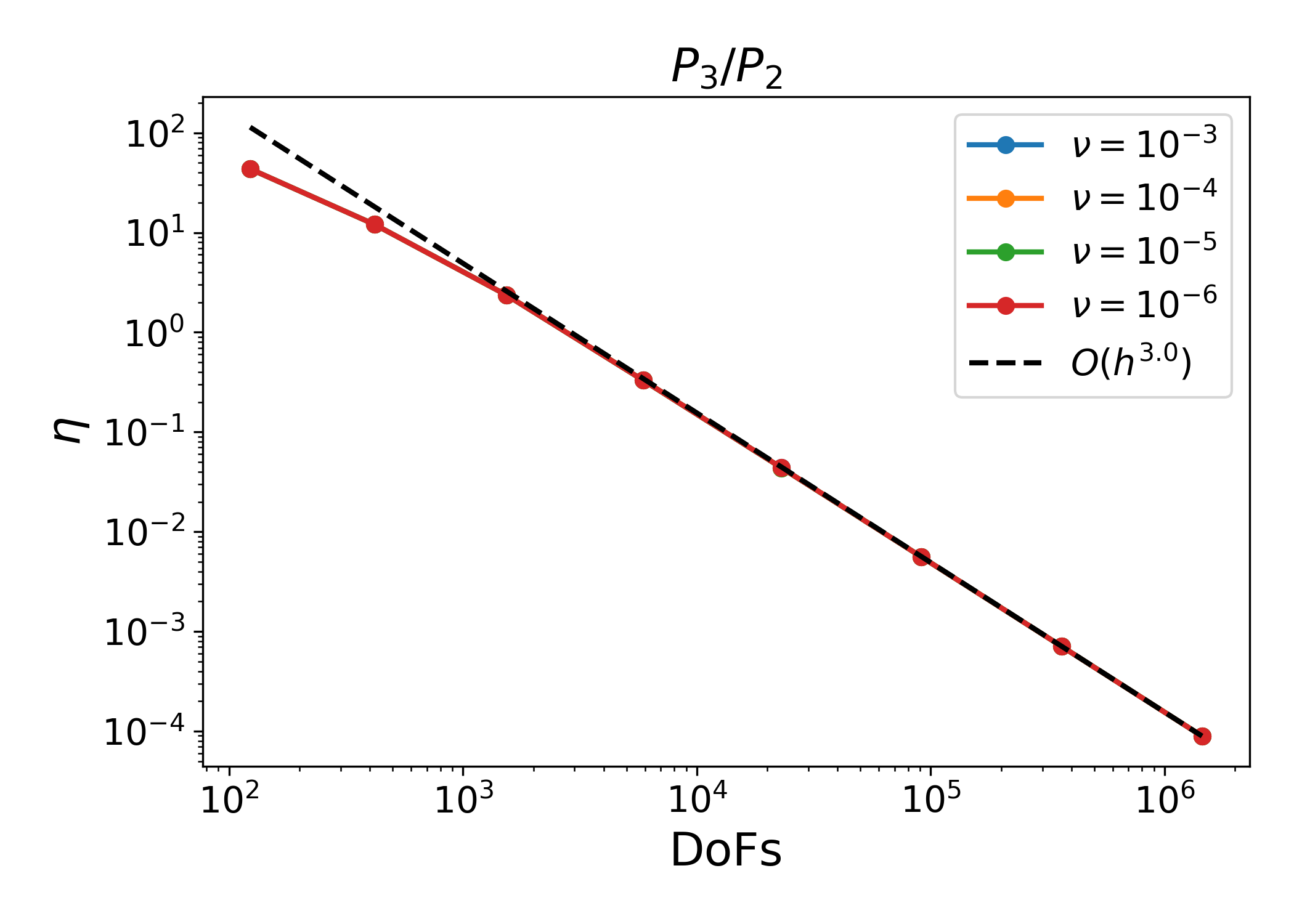}
\hspace*{0.5em}
\includegraphics[width=0.32\textwidth]{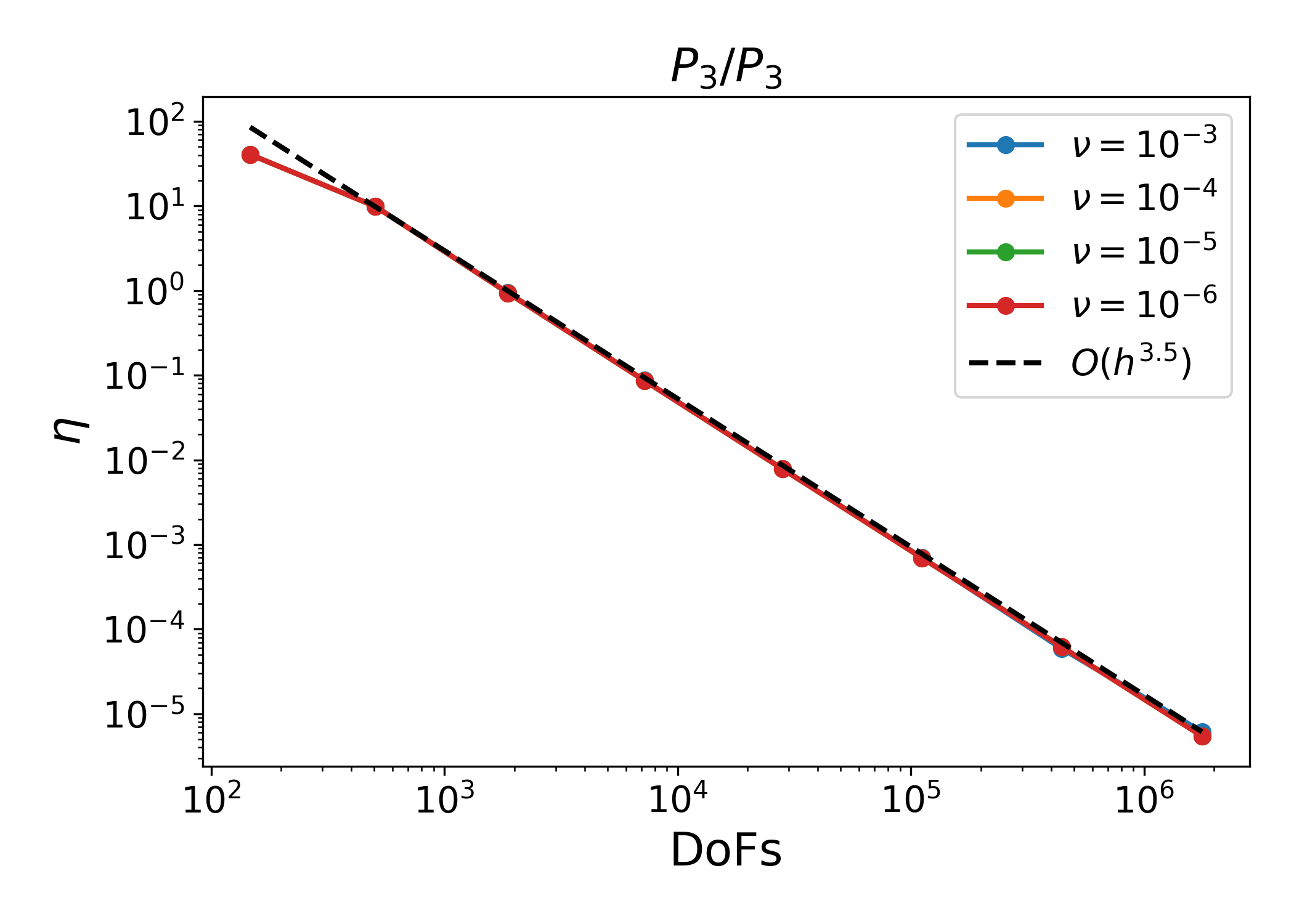}}
  \caption{Example~\ref{sec:smooth}. The a posteriori error estimator $\eta$ 
  for different pairs of finite element spaces and different values of the viscosity coefficient.}
  \label{fig:smooth-all-estimate}
\end{figure}

\begin{figure}[t!]
\centerline{
  \includegraphics[width=0.32\textwidth]{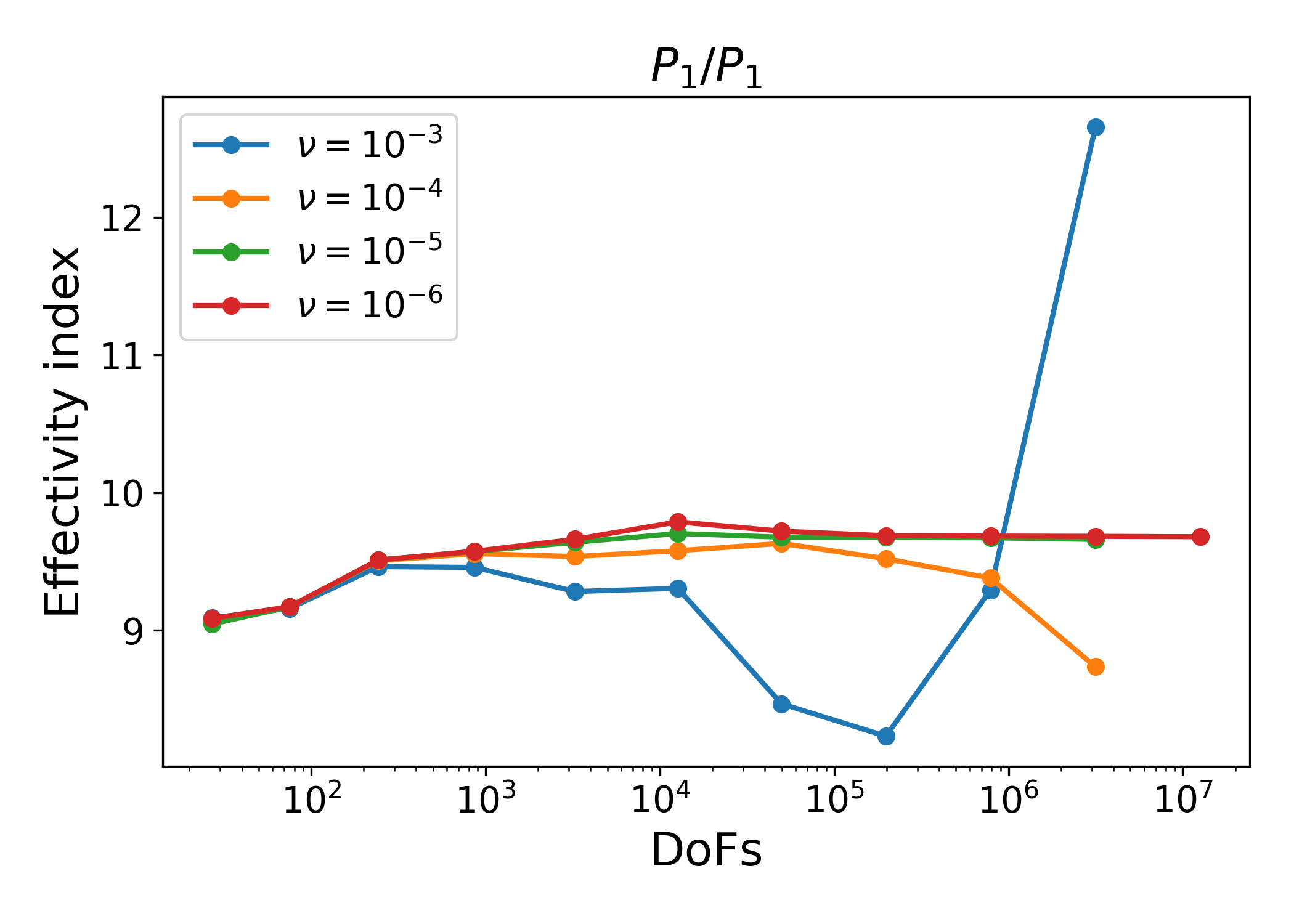}
  \hspace*{0.5em}
  \includegraphics[width=0.32\textwidth]{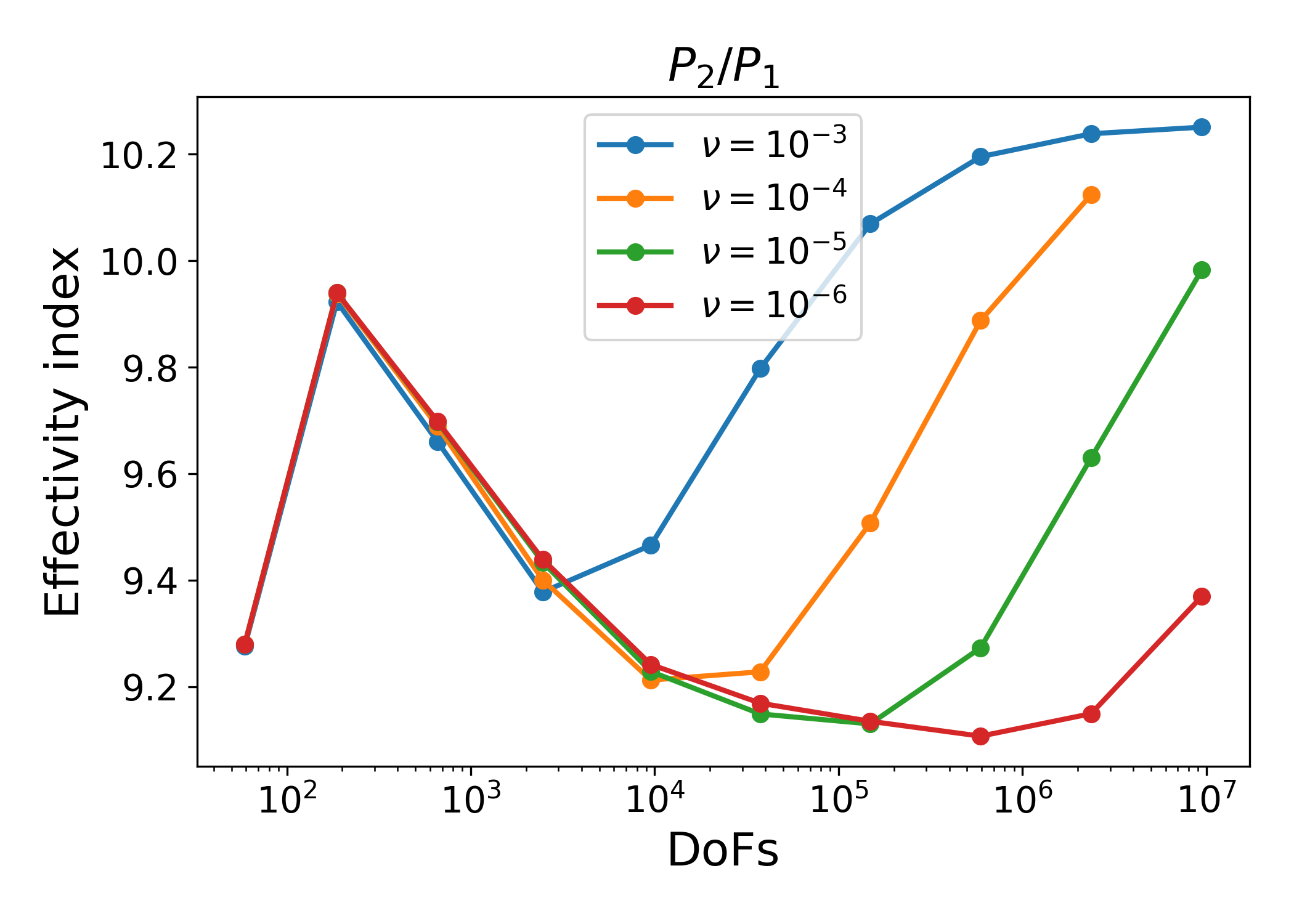}
  \hspace*{0.5em}
  \includegraphics[width=0.32\textwidth]{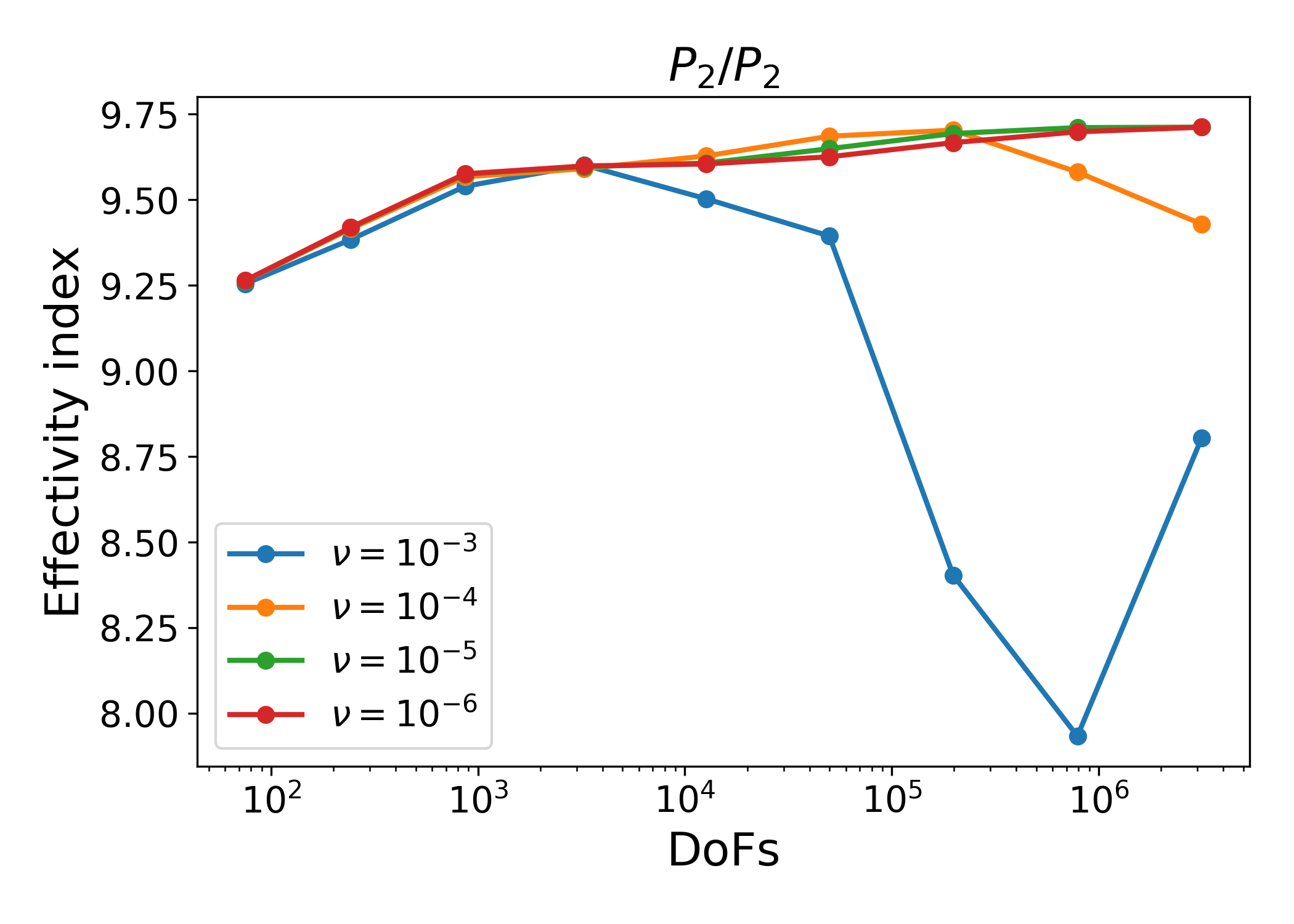}}
\centerline{\includegraphics[width=0.32\textwidth]{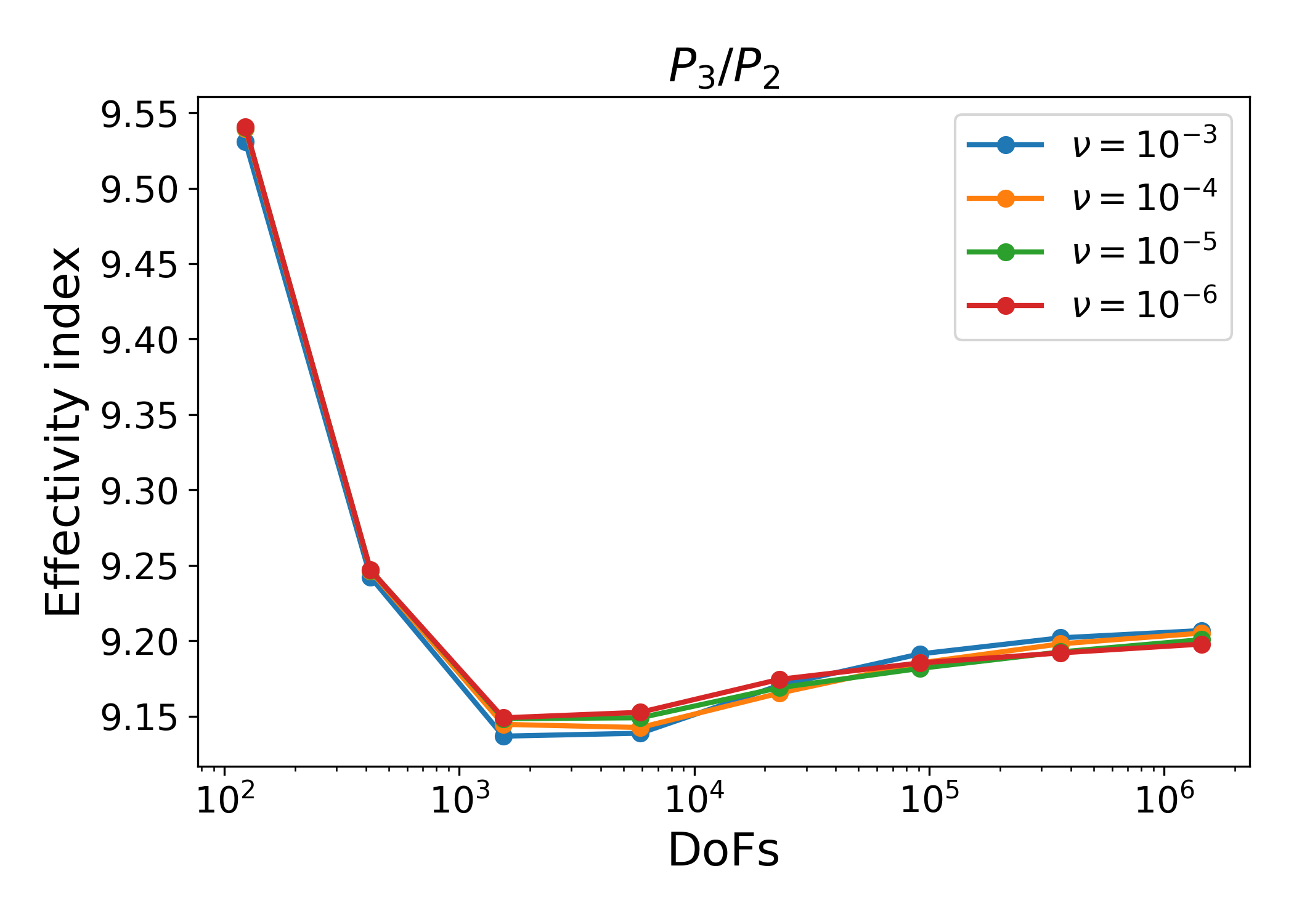}
\hspace*{0.5em}
  \includegraphics[width=0.32\textwidth]{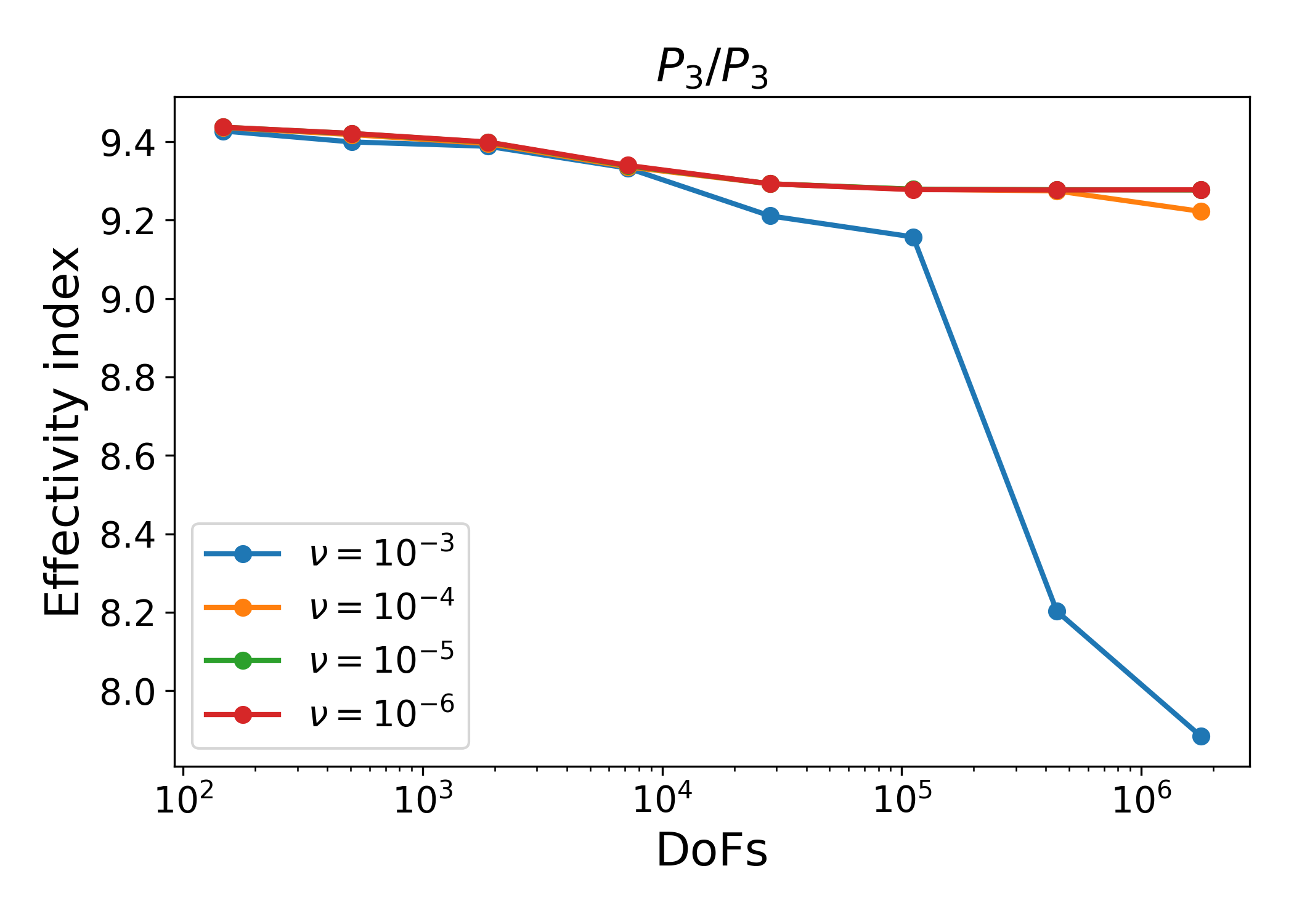}}
  \caption{Example~\ref{sec:smooth}. Effectivity indices for different pairs of finite element spaces and different values of the viscosity coefficient.}
  \label{fig:smooth-all-effectivity}
\end{figure}

Figure~\ref{fig:smooth-all-estimate} shows that the error estimator behaves in all cases
qualitatively in the same way as the error. The quantitative behavior is given by the 
effectivity index, which is the ratio of the error estimate and the error. Figure~\ref{fig:smooth-all-effectivity}
reveals that this index takes in most situations values of around 9 and in the other situations values in $[4,11]$.
That means, the estimator always
overestimates the error, often by a factor of around 9, independently of the pair of 
finite element spaces and the value of the viscosity coefficient. The latter property 
supports the analysis on the robustness of the proposed estimator. Notice that the 
numerical studies for scalar convection-diffusion problems in \cite{JN13} show a similar 
overestimation (factor 5-10). 

\begin{figure}[t!]
\centerline{\includegraphics[width=0.32\textwidth]{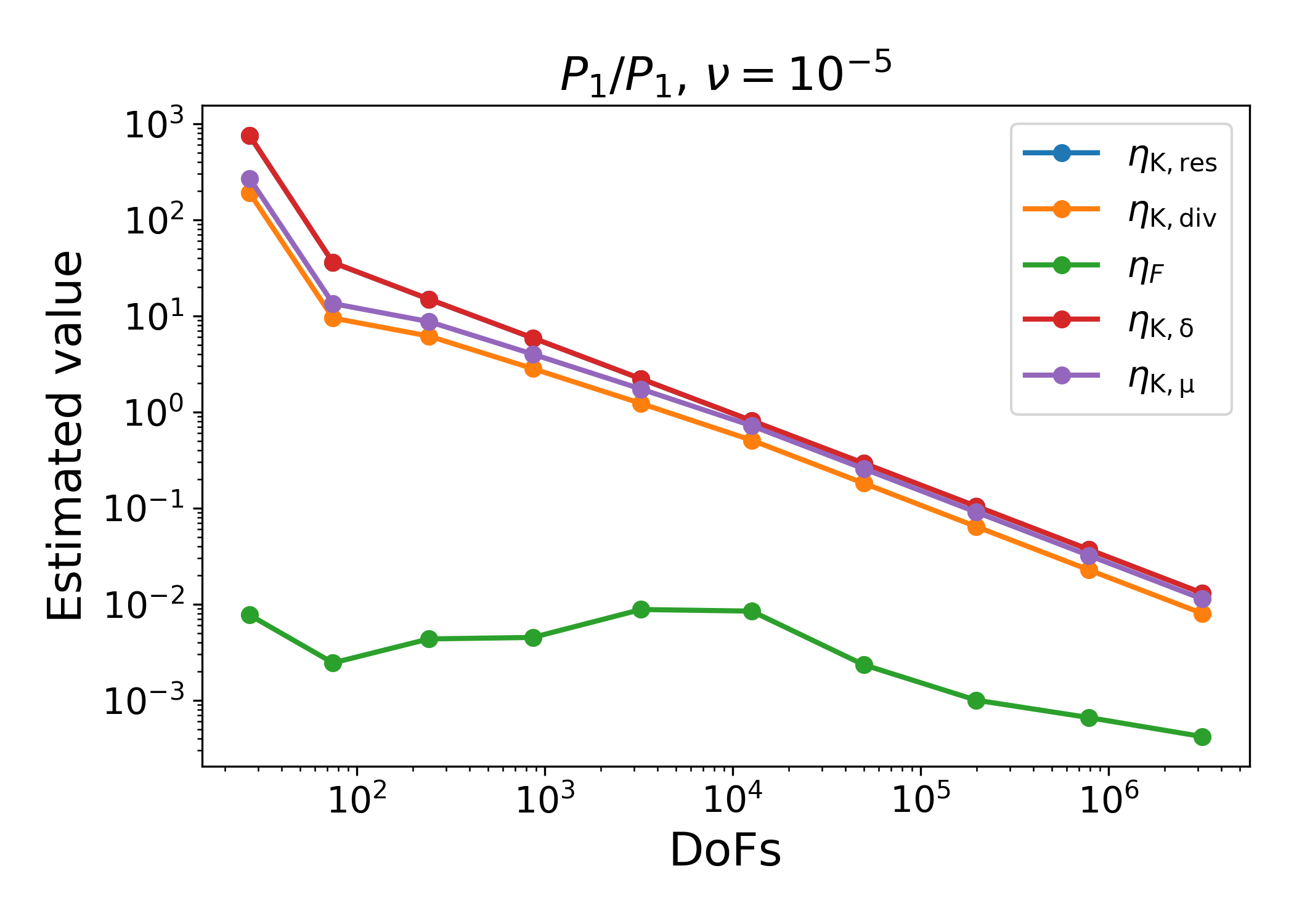}
\hspace*{0.5em}
\includegraphics[width=0.32\textwidth]{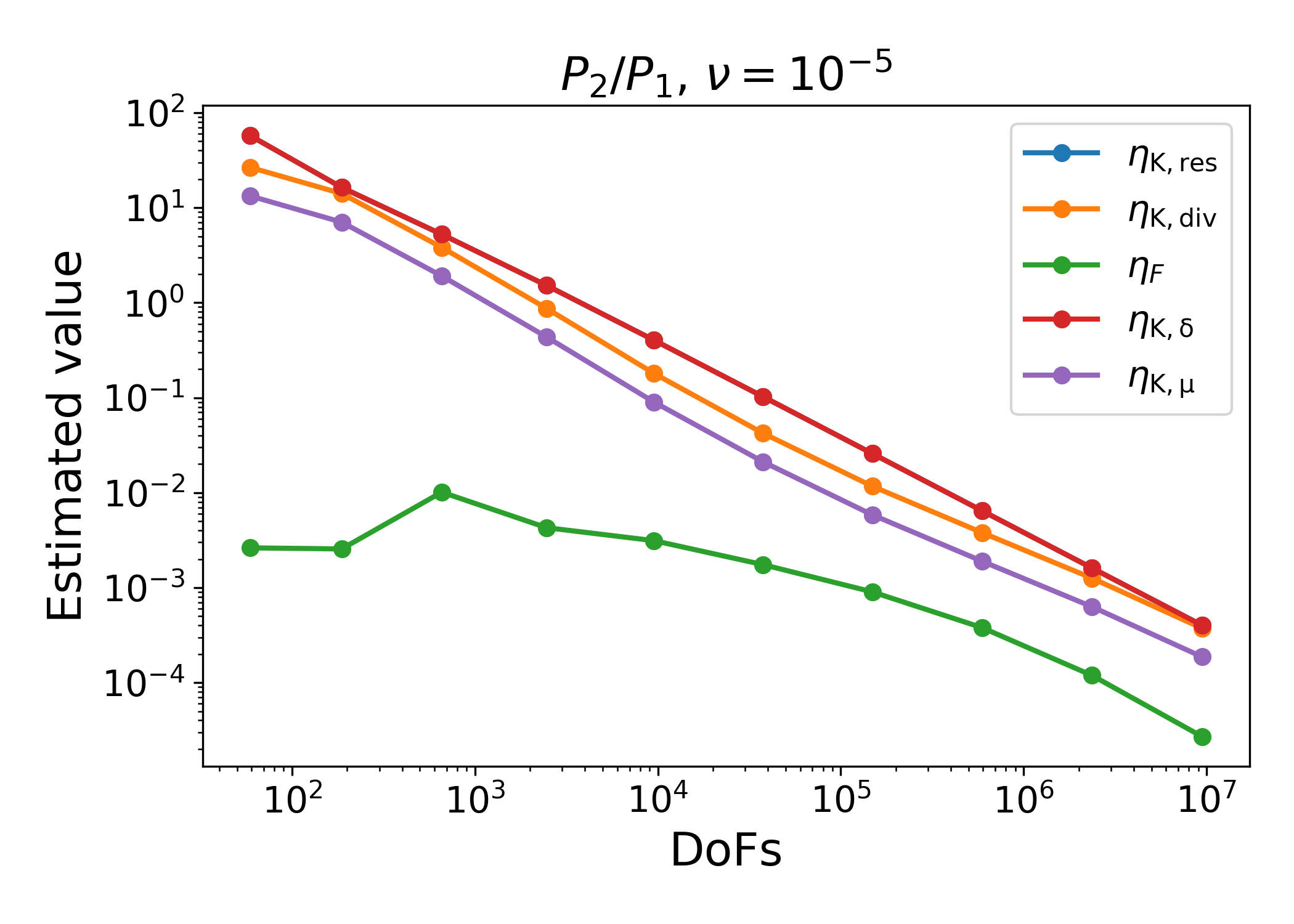}
\hspace*{0.5em}
\includegraphics[width=0.32\textwidth]{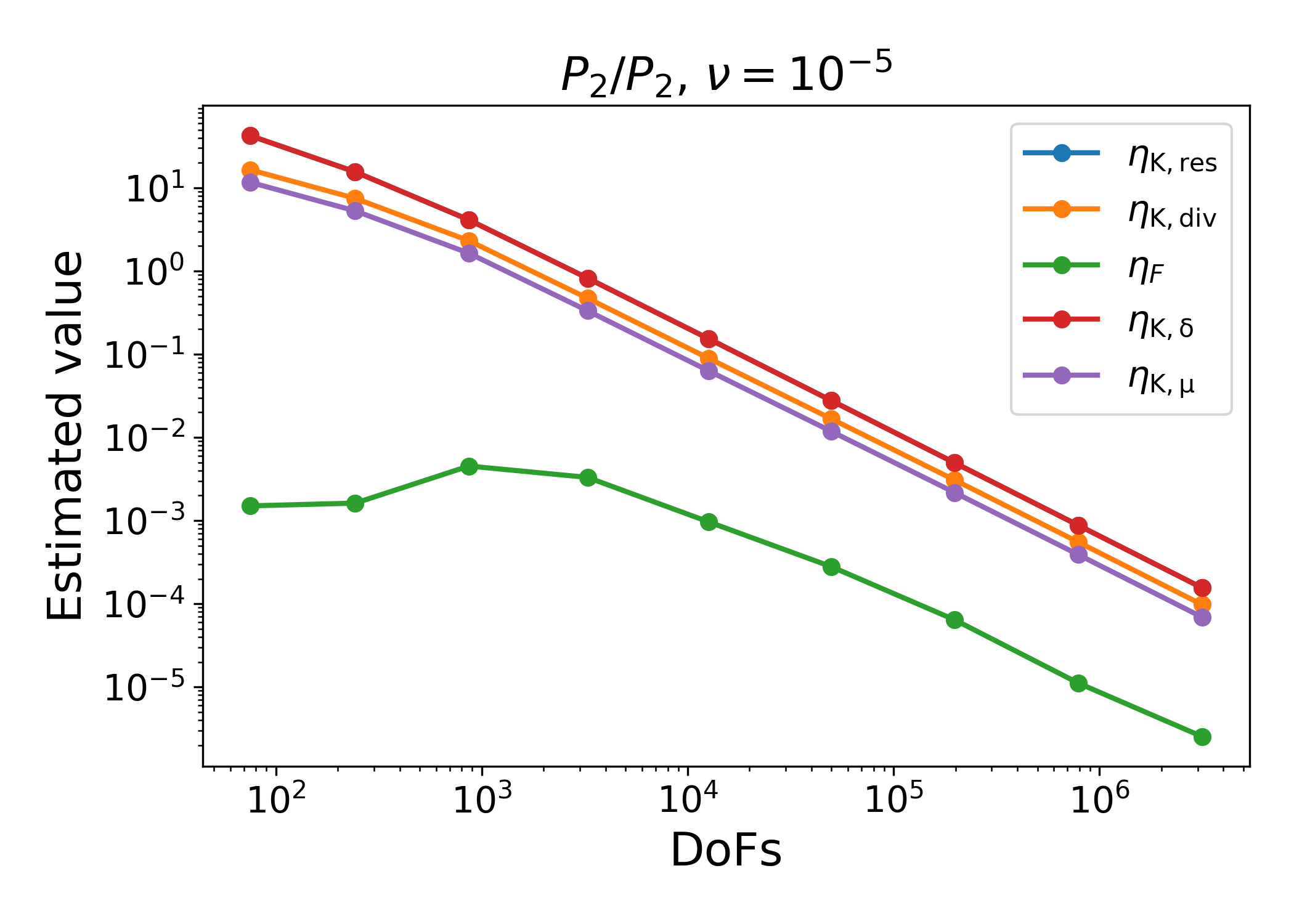}}
\centerline{\includegraphics[width=0.32\textwidth]{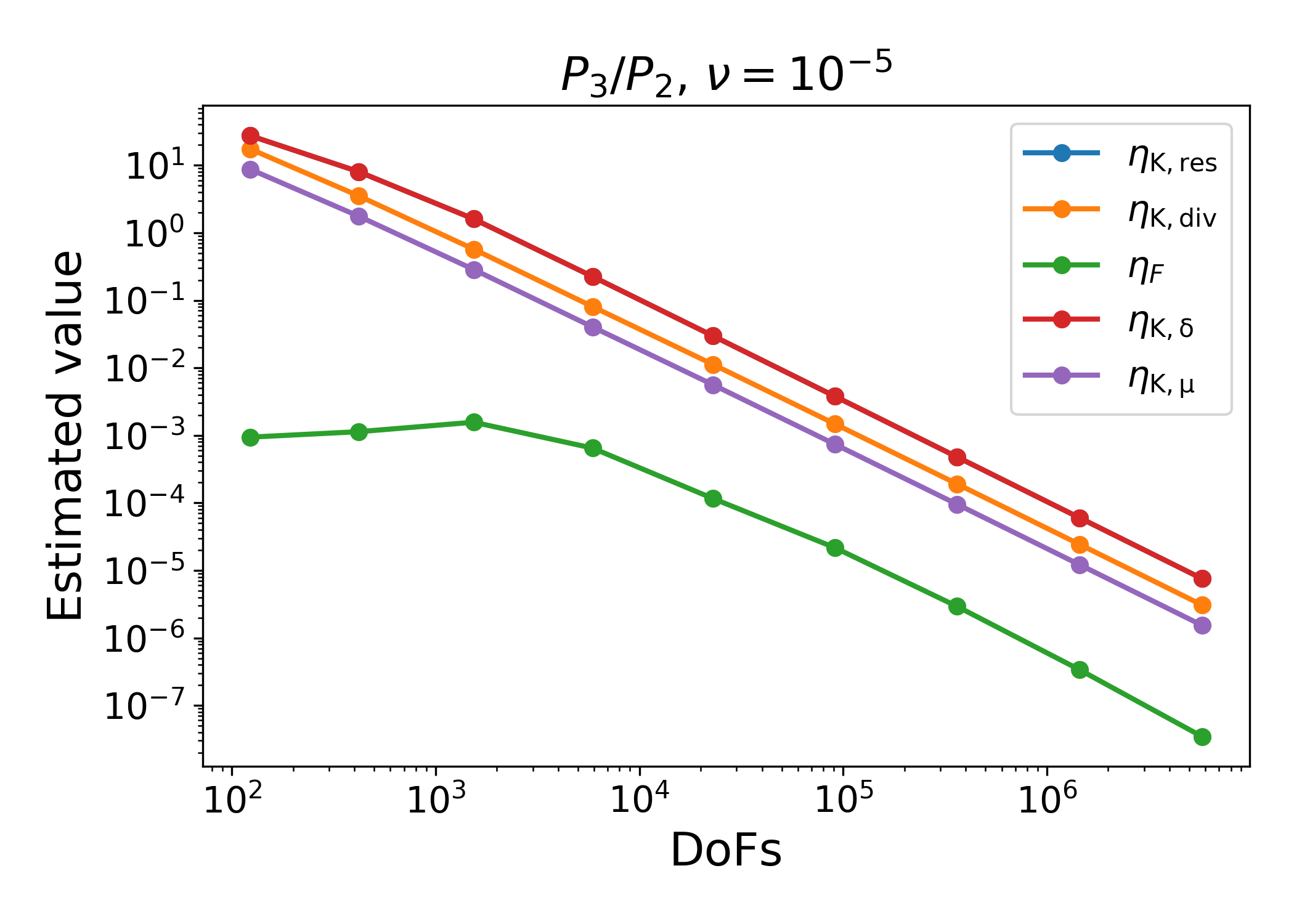}
\hspace*{0.5em}
\includegraphics[width=0.32\textwidth]{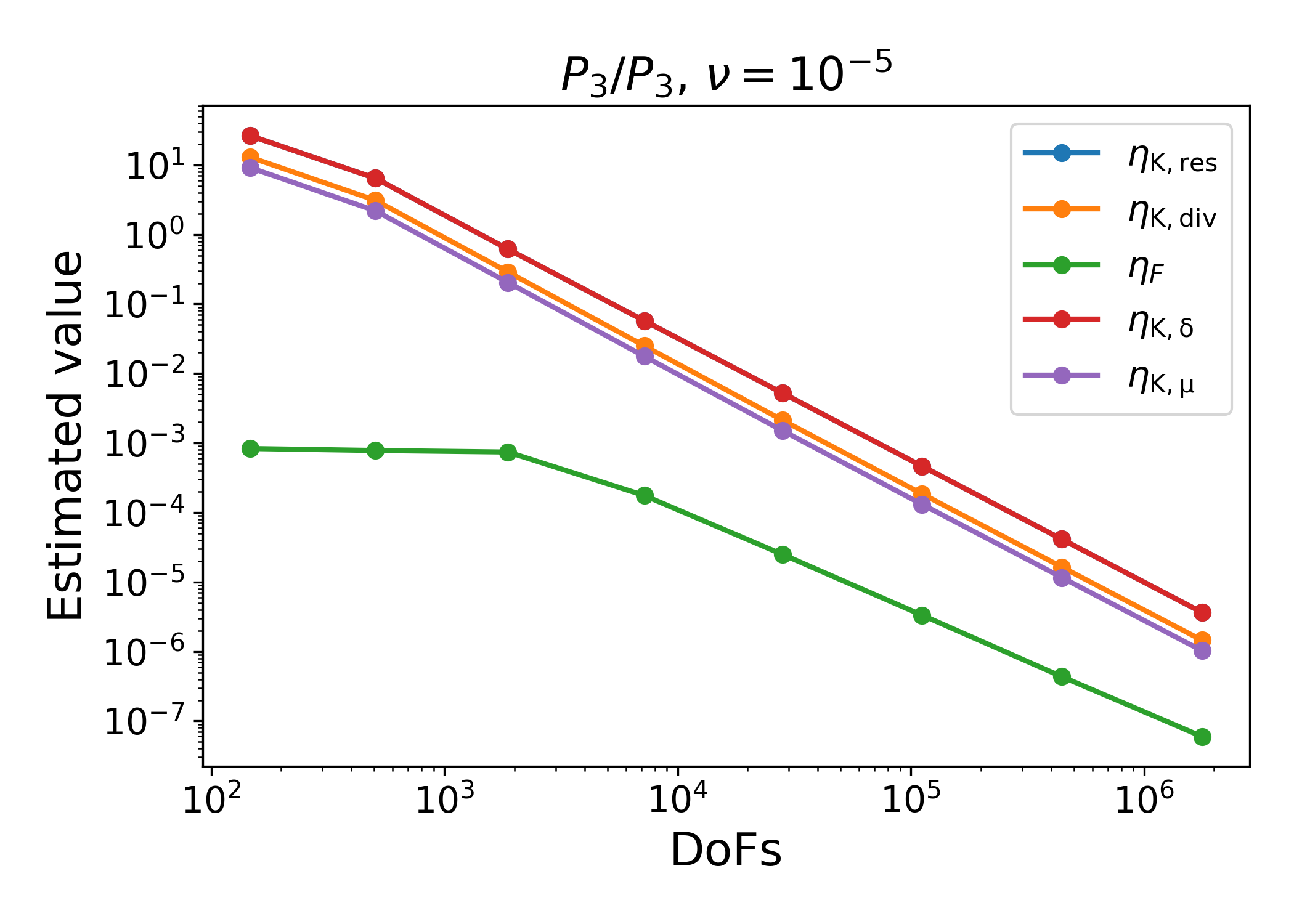}}
  \caption{Example~\ref{sec:smooth}. Contributions of the individual parts of $\eta$, simulations with  $\nu=10^{-5}$.}
  \label{fig:smooth-all-indicators}
\end{figure}

An investigation of the contributions of the individual parts of the error estimator $\eta$ from 
\eqref{eq:eta} is presented in Figure~\ref{fig:smooth-all-indicators}. In this figure, 
$\eta_{K,\ast}$ stands for $(\sum_{K\in\mathcal T_h} \eta_{K,\ast}^2)^{1/2}$. It can be 
seen that $\eta_{K,\mathrm{res}}$, $\eta_{K,\mathrm{div}}$, $\eta_{K,\delta}$, and  $\eta_{K,\mu}$
are often of similar size and that $\eta_F$ is always notably smaller. In particular, the 
contribution  $\eta_{K,\mu}$ from the grad-div stabilization is not negligible. Concerning 
$\eta_F$, the jump of the pressure always vanishes in the residuals at the internal facets defined 
in \eqref{eq:rF}, since in all cases continuous pressure approximations were used, so that 
the term $\br_F (\bu_h,p_h)$, and with that $\eta_F$, contains the small factor $\nu$ coming 
from the velocity contribution in the jumps. 

\begin{figure}[t!]
\centerline{\includegraphics[width=0.32\textwidth]{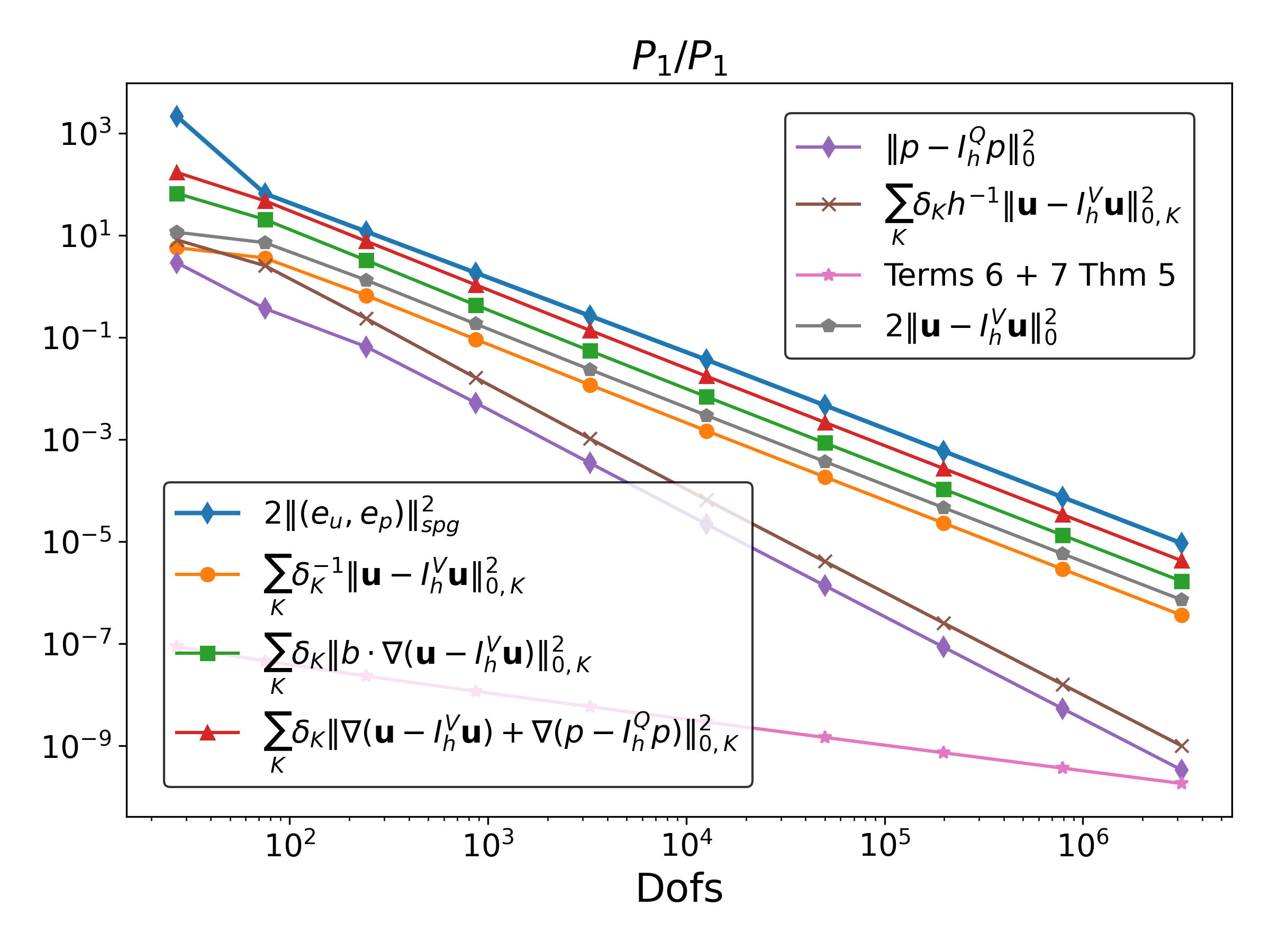}
\hspace*{0.5em}
\includegraphics[width=0.32\textwidth]{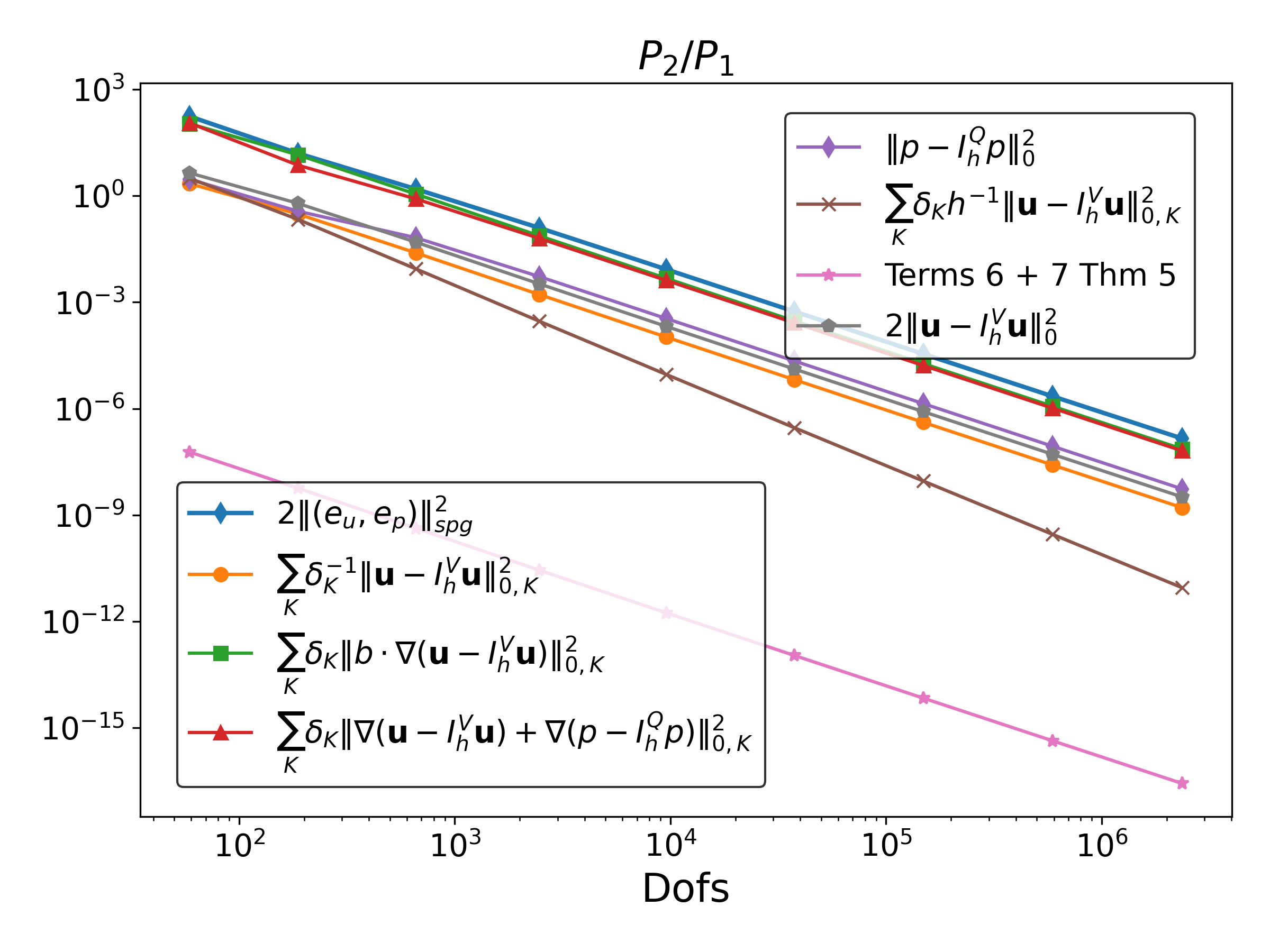}
\hspace*{0.5em}
\includegraphics[width=0.32\textwidth]{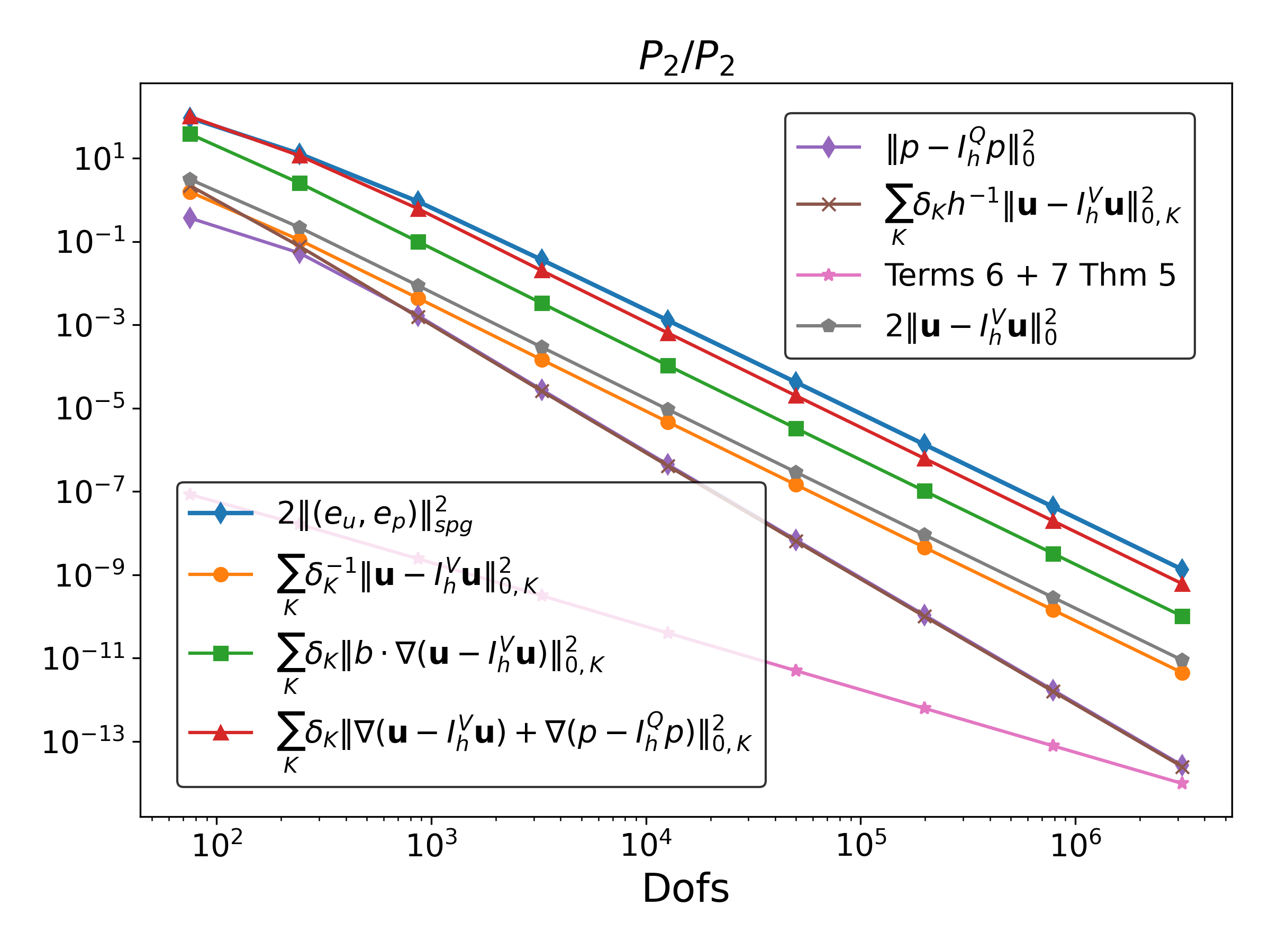}}
\centerline{\includegraphics[width=0.32\textwidth]{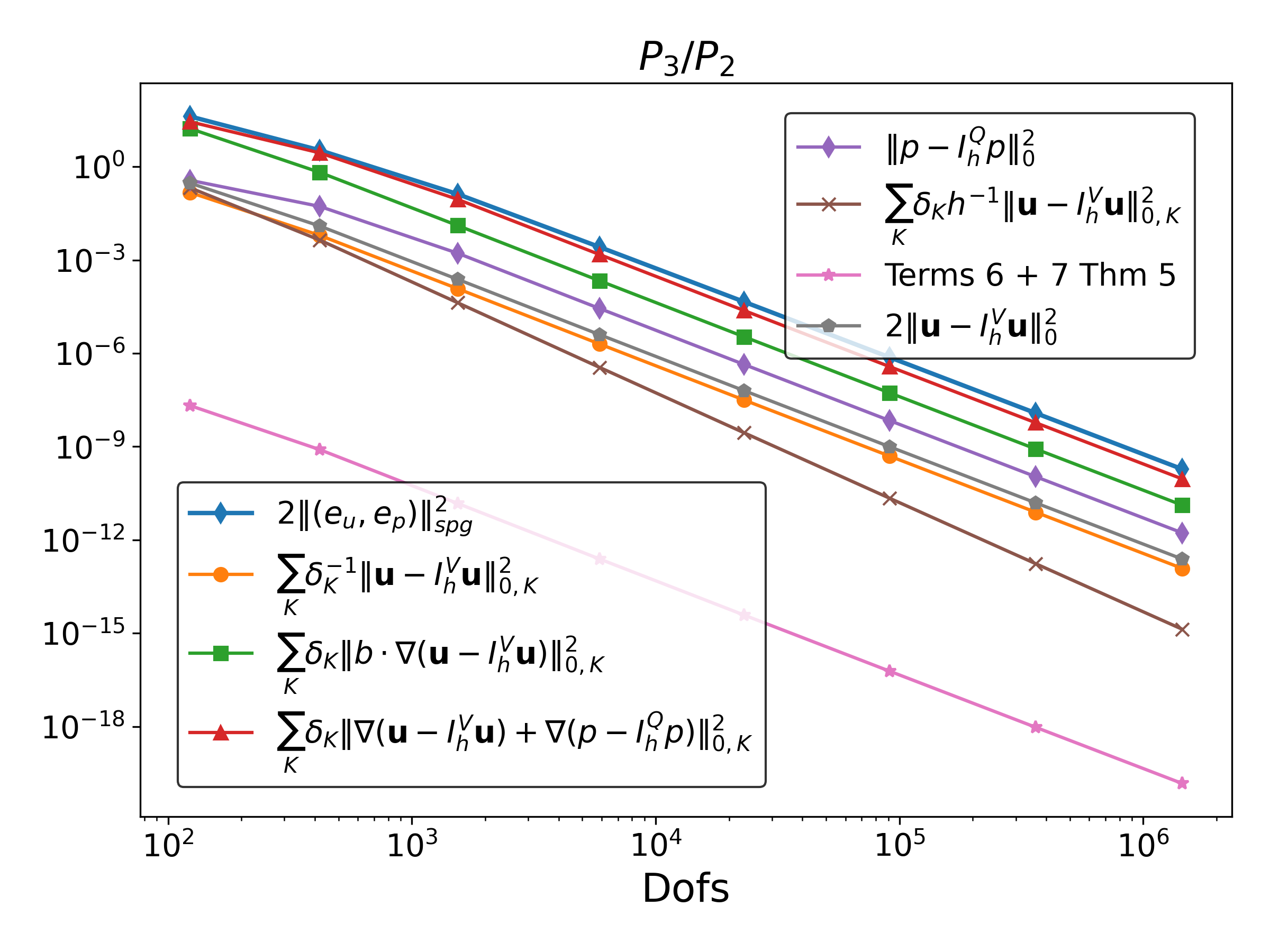}
\hspace*{0.5em}
\includegraphics[width=0.32\textwidth]{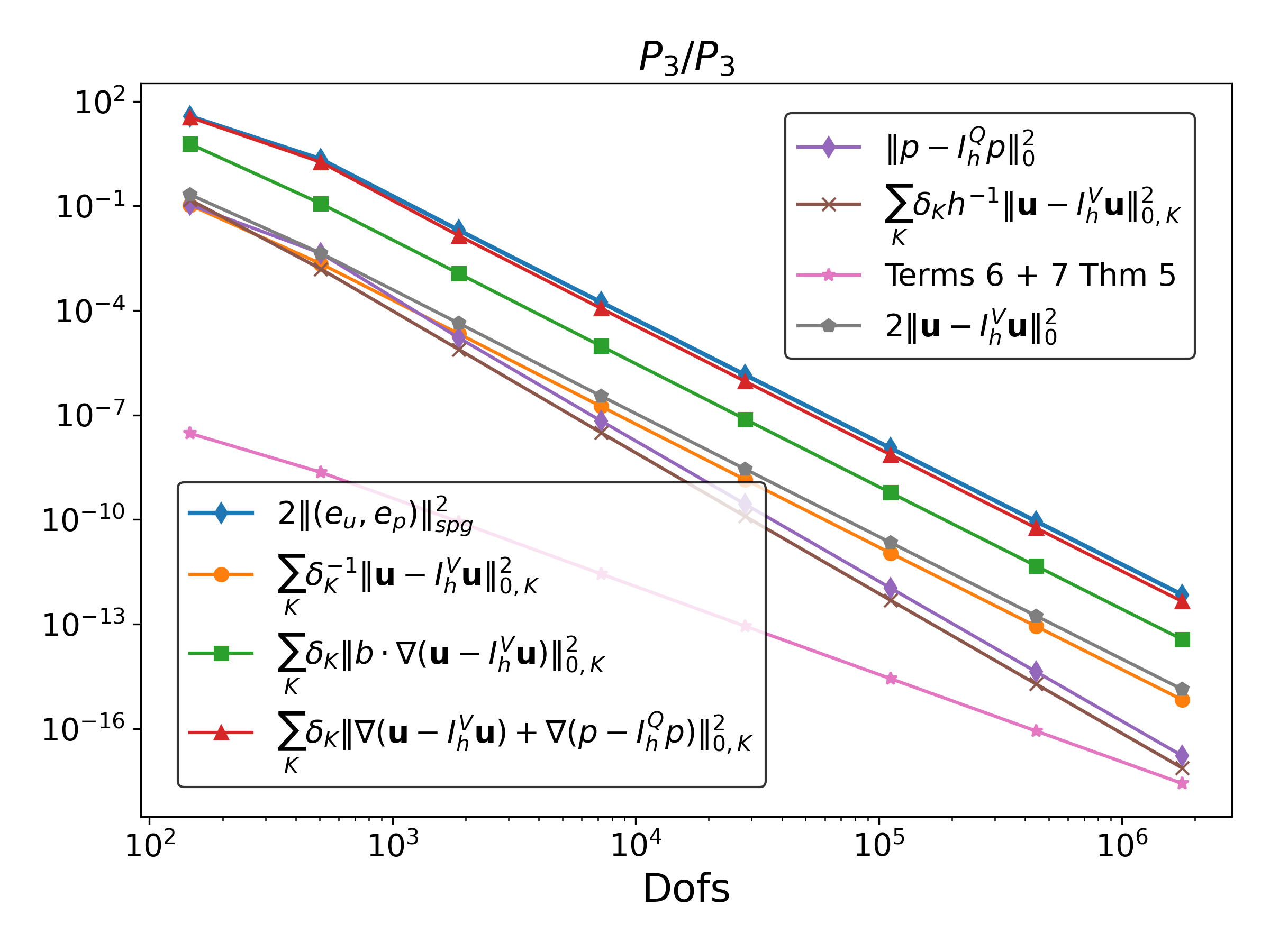}}
  \caption{Example~\ref{sec:smooth}. Comparison of $2 \no{\(\bu-\bu_h,p-p_h\)}{\mathrm{spg}}^2$ (blue line) and of $2 \|\bu-\Ihv\bu\|_{L^2(\Omega)}^2$ (gray line) with  terms that appear in the 
  hypotheses from Hypothesis~\ref{hypo:errors} as well as with the sum of the  two last terms from the error bound \eqref{eq:est},  simulations with  $\nu=10^{-6}$.}
  \label{fig:smooth-hypos}
\end{figure}

The derivation of the estimator $\eta$ from \eqref{eq:eta} is based on a couple of hypotheses 
with respect to interpolation errors. It is argued in Hypothesis~\ref{hypo:errors} that, 
on the one hand, these hypotheses are often reasonable but, on the other hand, there are cases 
where they might not be satisfied. Figure~\ref{fig:smooth-hypos} presents for the considered 
example a comparison between the terms on the left-hand sides of the hypotheses and the 
terms on the right-hand sides, which are $2 \no{\(\bu-\bu_h,p-p_h\)}{\mathrm{spg}}^2$ 
and $2 \|\bu-\Ihv\bu\|_{L^2(\Omega)}^2$. It can be seen that for this example all hypotheses 
are satisfied. In addition, there are two terms in the error bound, the last two terms on the 
right-hand side of \eqref{eq:est}, which are argued to be negligible in practice, 
compare Remark~\ref{rem:est_last_two_terms}. In Figure~\ref{fig:smooth-hypos}, the sum of both 
terms is presented and it can be seen that it is in fact much smaller than all the other terms, 
and with that much smaller than the error estimate. 

\subsection{Solution with Boundary Layers}\label{sec:layer}

This example is taken from \cite{KK20}. The solution is given by 
\[
  \bu = \operatorname{curl}\phi = \left(\frac{\partial \phi}{\partial y},\,-\frac{\partial \phi}{\partial x}\right)^T,\quad
  p = \mathrm{e}^{x+y} - p_m,
\]
with the potential function
\[
  \phi(x,y) = x^2 y^2 \left(1 - \mathrm{e}^{\lambda(x-1)}\right)^2 \left(1 - \mathrm{e}^{\lambda(y-1)}\right)^2,
\]
$\lambda = 0.5/\sqrt{\nu}$, and $p_m \in \mathbb{R}$ is chosen such that
$\int_\Omega p\ \mathrm{d}\bx = 0$. By construction, the velocity field possesses boundary
layers at $x=1$ and $y=1$. The convection field for the Oseen problem was chosen to be 
$\bb = (1,1)^T$ and the reaction field to be $\sigma = 0$. Dirichlet boundary conditions were 
prescribed at $\partial\Omega$.

We present results on adaptively refined grids, where the adaptive refinement was controlled 
by the local error estimator 
\[
 \left(\eta_{K,\mathrm{res}}^2 + \eta_{K,\mathrm{div}}^2 + \eta_F^2 + \eta_{K,\delta}^2 + \eta_{K,\mu}^2 \right)^{1/2},
 \quad K \in \mathcal T_h.
\]
The same refinement strategy as used in  \cite{Joh00} was applied, with the same parameters as 
in this paper.  Exemplarily, Figure~\ref{fig:bl-meshes} presents some 
adaptively refined grids.  It can be observed that the error estimator leads correctly to a refinement 
in the region and the neighborhood 
of the boundary layers. 

\begin{figure}[t!]
  \centering
  \subfigure[Level 5, total DoFs $=5130$]{\includegraphics[width=0.40\textwidth]{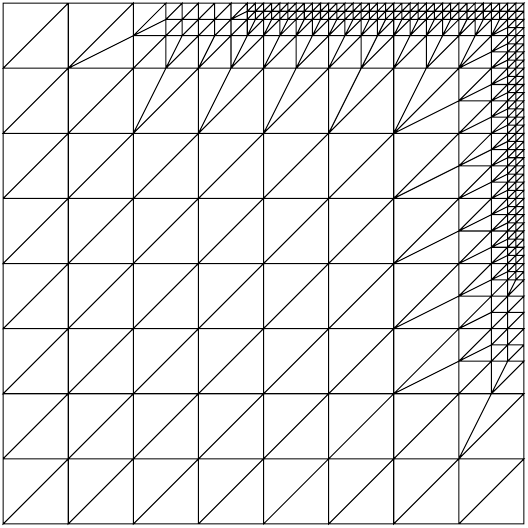}}
  \hspace*{0.5em}
  \subfigure[Level 10, total DoFs $=14217$]{\includegraphics[width=0.40\textwidth]{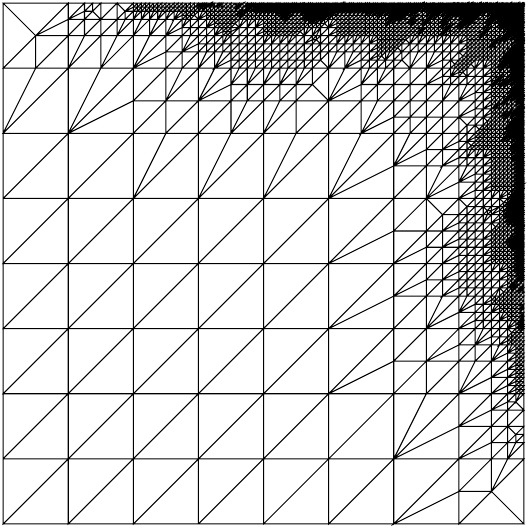}}\\[0.5em]
  \subfigure[Level 15, total DoFs $=114108$]{\includegraphics[width=0.40\textwidth]{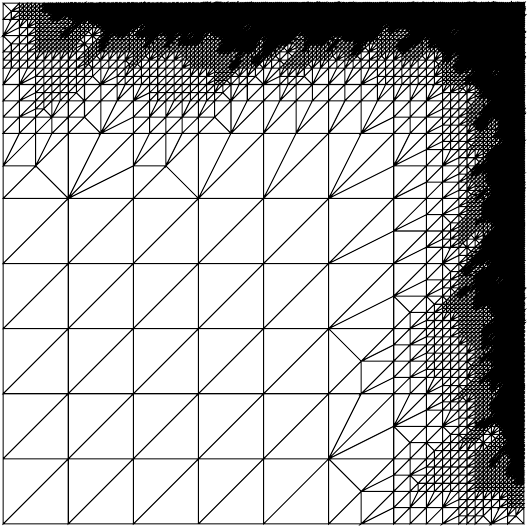}}
  \hspace*{0.5em}
  \subfigure[Level 19, total DoFs $=2748675$]{\includegraphics[width=0.40\textwidth]{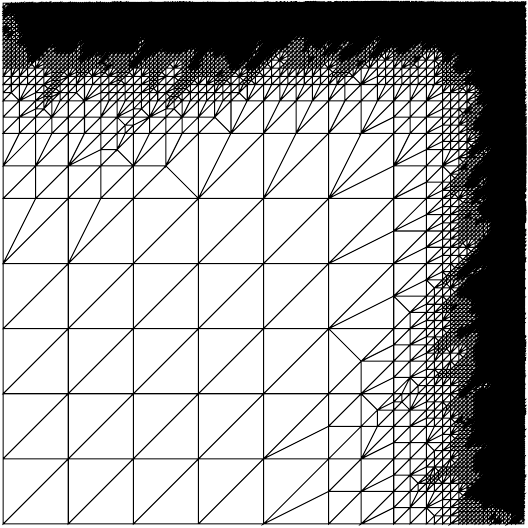}}
  \caption{Example~\ref{sec:layer}. Adaptively refined meshes for the  $P_2/P_1$ simulation with $\nu=10^{-6}$.}
  \label{fig:bl-meshes}
\end{figure}

The a priori error analysis states that $\no{\(\bu-\bu_h,p-p_h\)}{\mathrm{spg}}$ is bounded by 
a constant, which is independent of the viscosity, times a higher order Sobolev norm of the solution. 
Because of the boundary layers, higher order Sobolev norms of the velocity field depend on 
inverse powers of the viscosity coefficient, which is also clear from the analytic representation of $\bu$. 
Consequently, one expects for this example that $\no{\(\bu-\bu_h,p-p_h\)}{\mathrm{spg}}$ increases 
if the viscosity coefficient decreases, which is the reason that the corresponding a priori estimate is sometimes called 
semi-robust. This behavior is clearly visible in Figure~\ref{fig:bl-all-error}.

\begin{figure}[t!]
  \centerline{
  \includegraphics[width=0.32\textwidth]{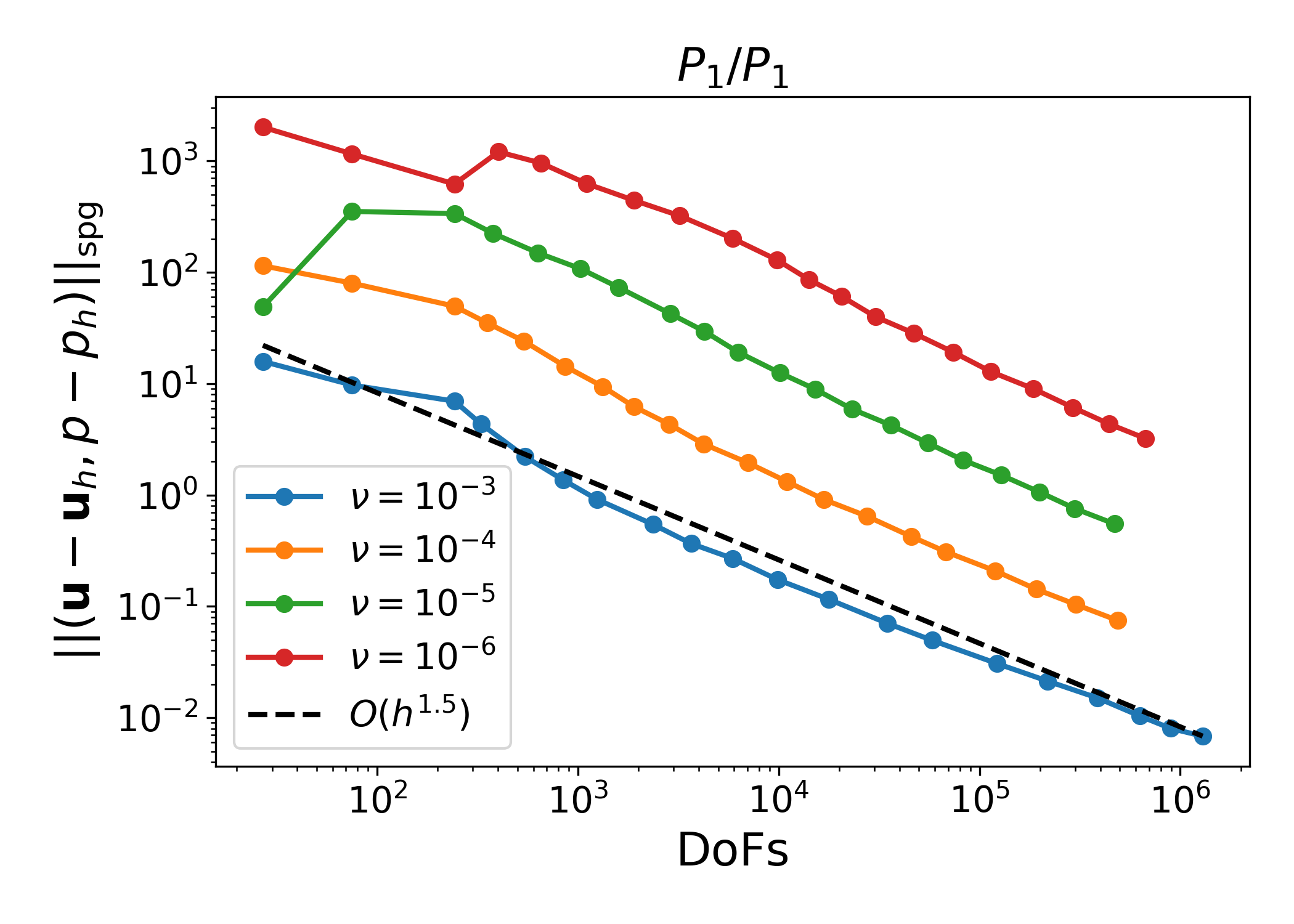}
   \hspace*{0.5em}
  \includegraphics[width=0.32\textwidth]{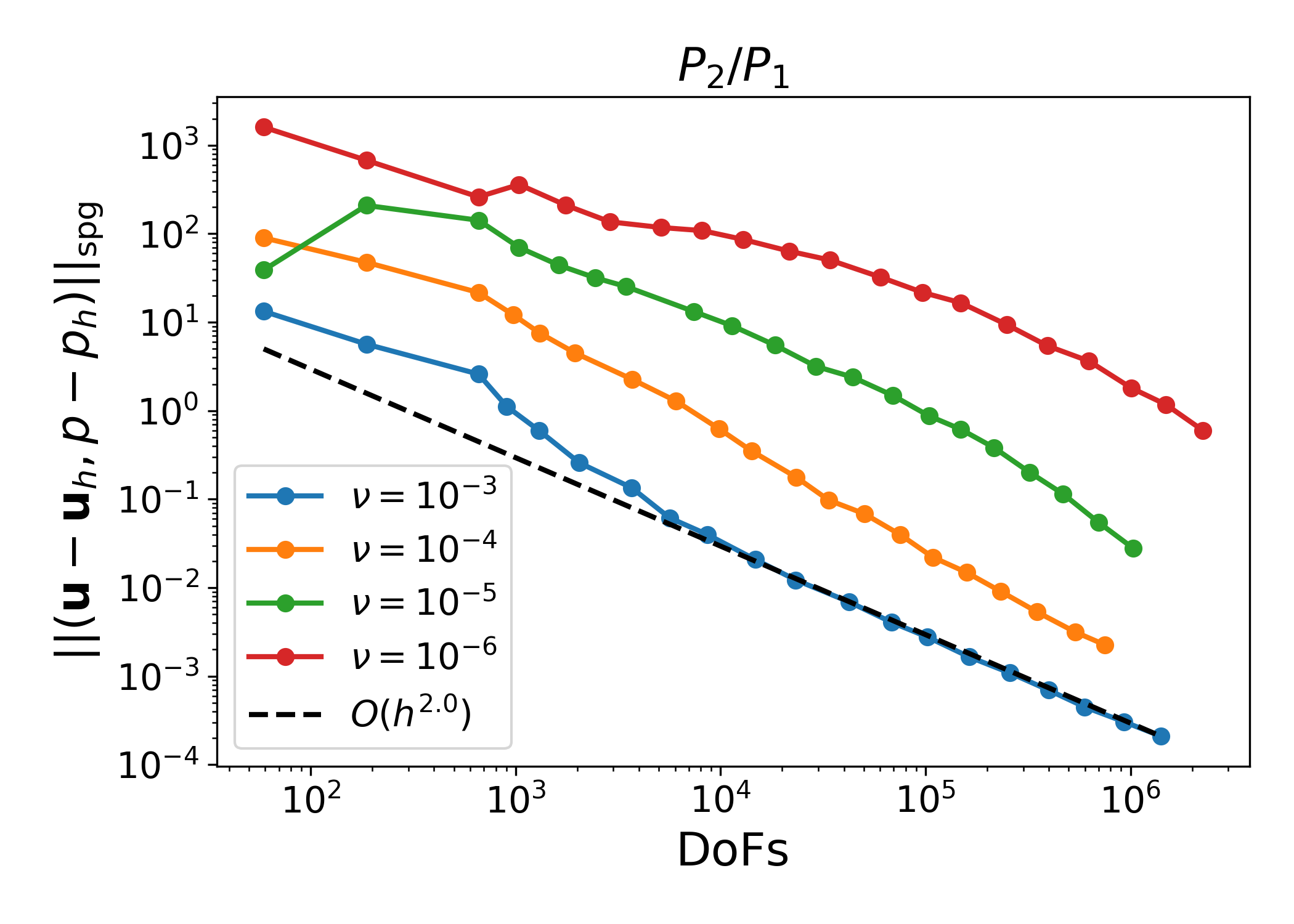}
   \hspace*{0.5em}
  \includegraphics[width=0.32\textwidth]{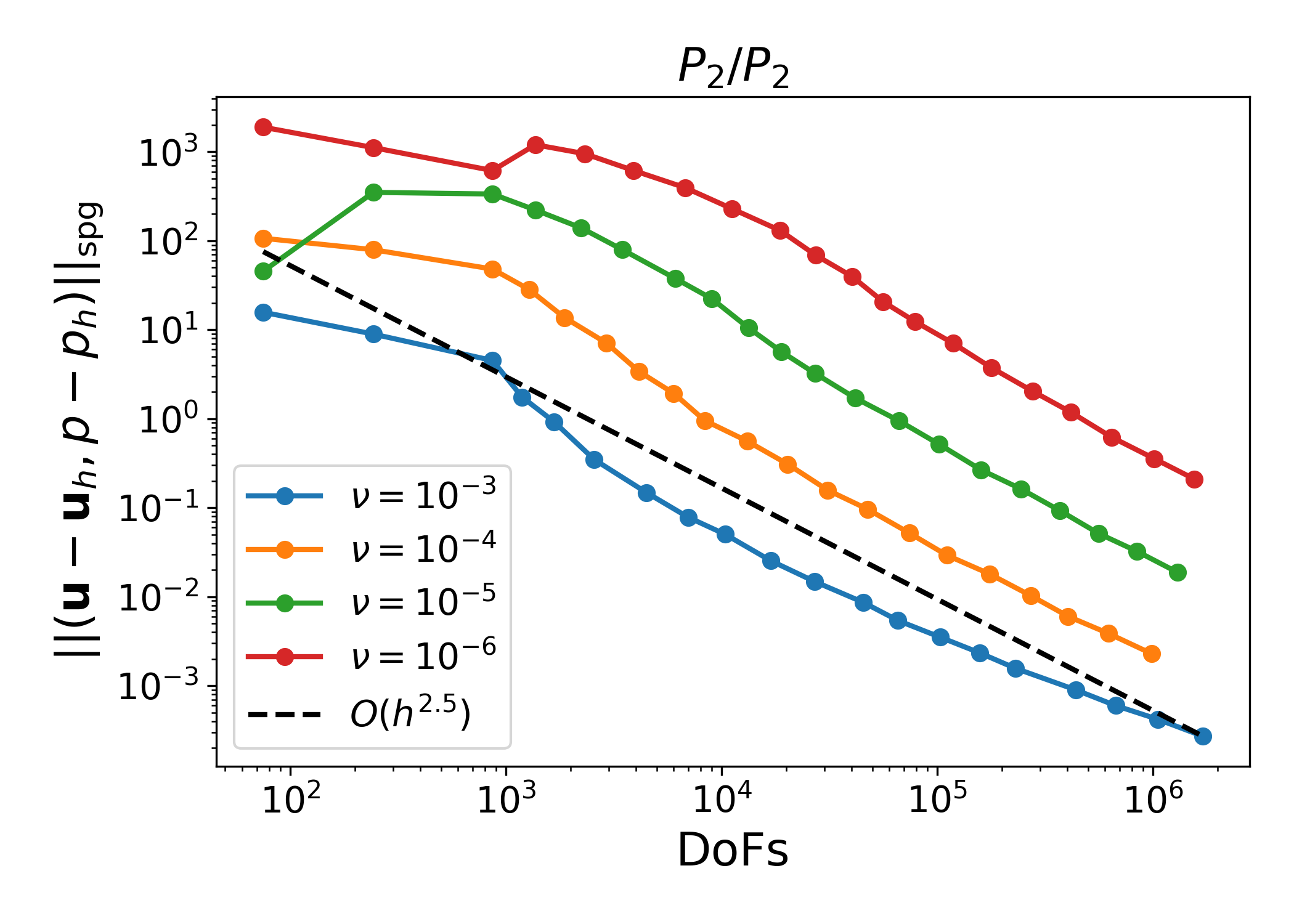}}
  \centerline{\includegraphics[width=0.32\textwidth]{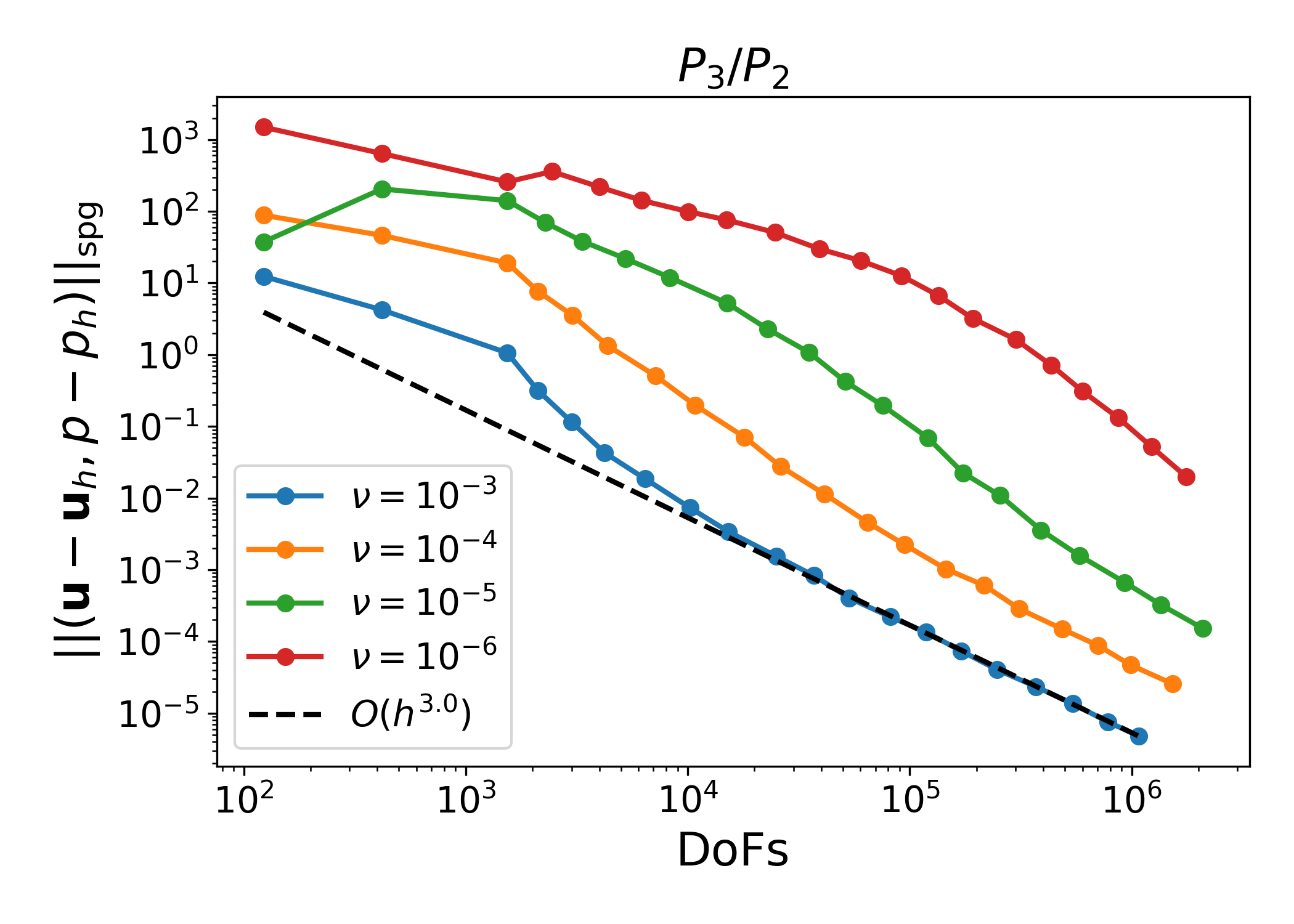}
   \hspace*{0.5em}
  \includegraphics[width=0.32\textwidth]{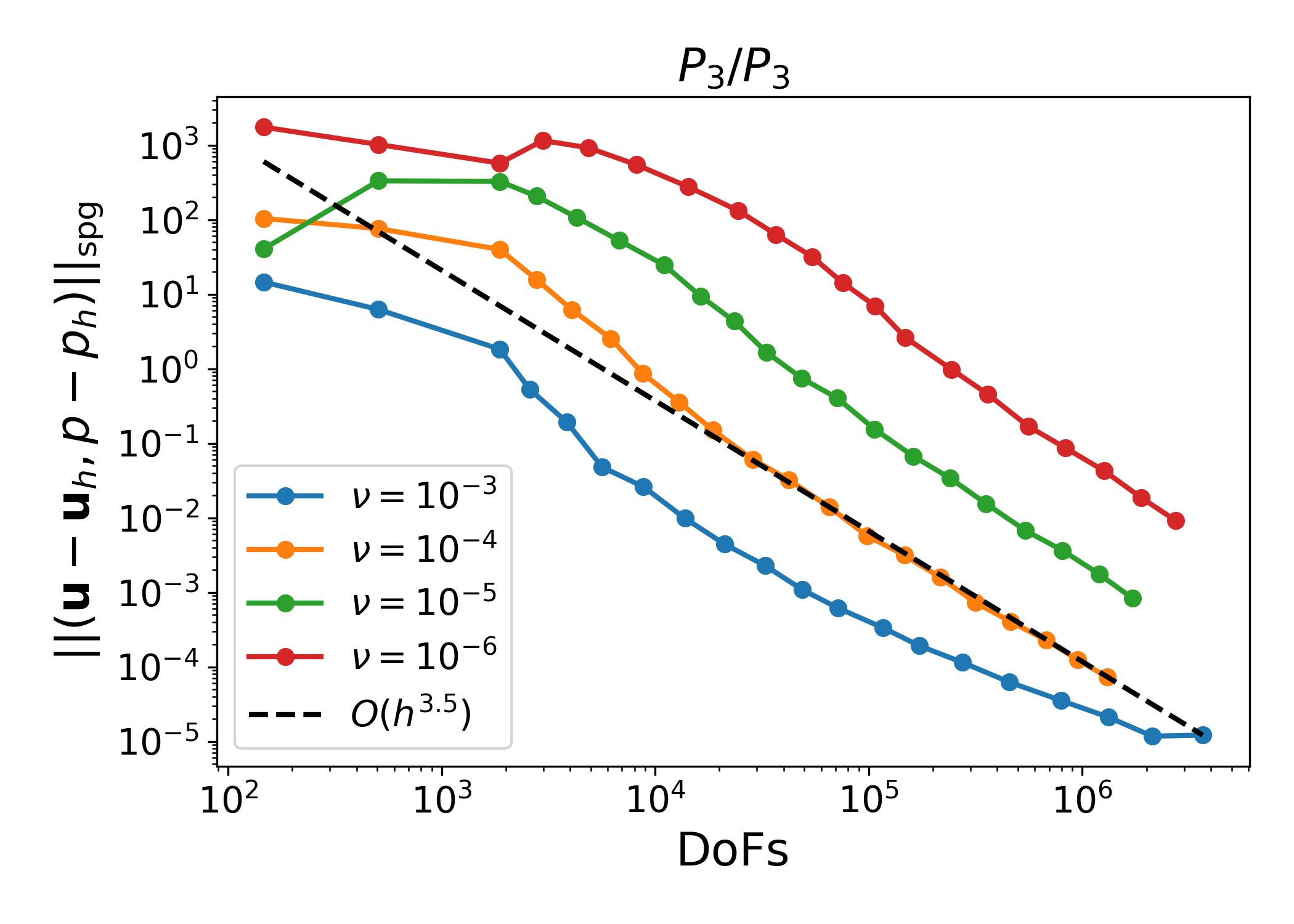}}
  \caption{Example~\ref{sec:layer}. Errors $\no{\(\bu-\bu_h,p-p_h\)}{\mathrm{spg}}$ 
  for different pairs of finite element spaces and different values of the viscosity coefficient.}
  \label{fig:bl-all-error}
\end{figure}

Figure~\ref{fig:bl-all-estimate} reveals that the proposed error estimator shows the 
same behavior: The estimates increase for smaller viscosity coefficients. In Figure~\ref{fig:bl-all-effectivity}, 
it can be seen that the effectivity indices take values of around 9, independently of the viscosity 
coefficient,  for this example, 
similarly as in Example~\ref{sec:smooth}. Hence, also these results support the robustness of the proposed
error estimator.

\begin{figure}[t!]
  \centerline{
  \includegraphics[width=0.32\textwidth]{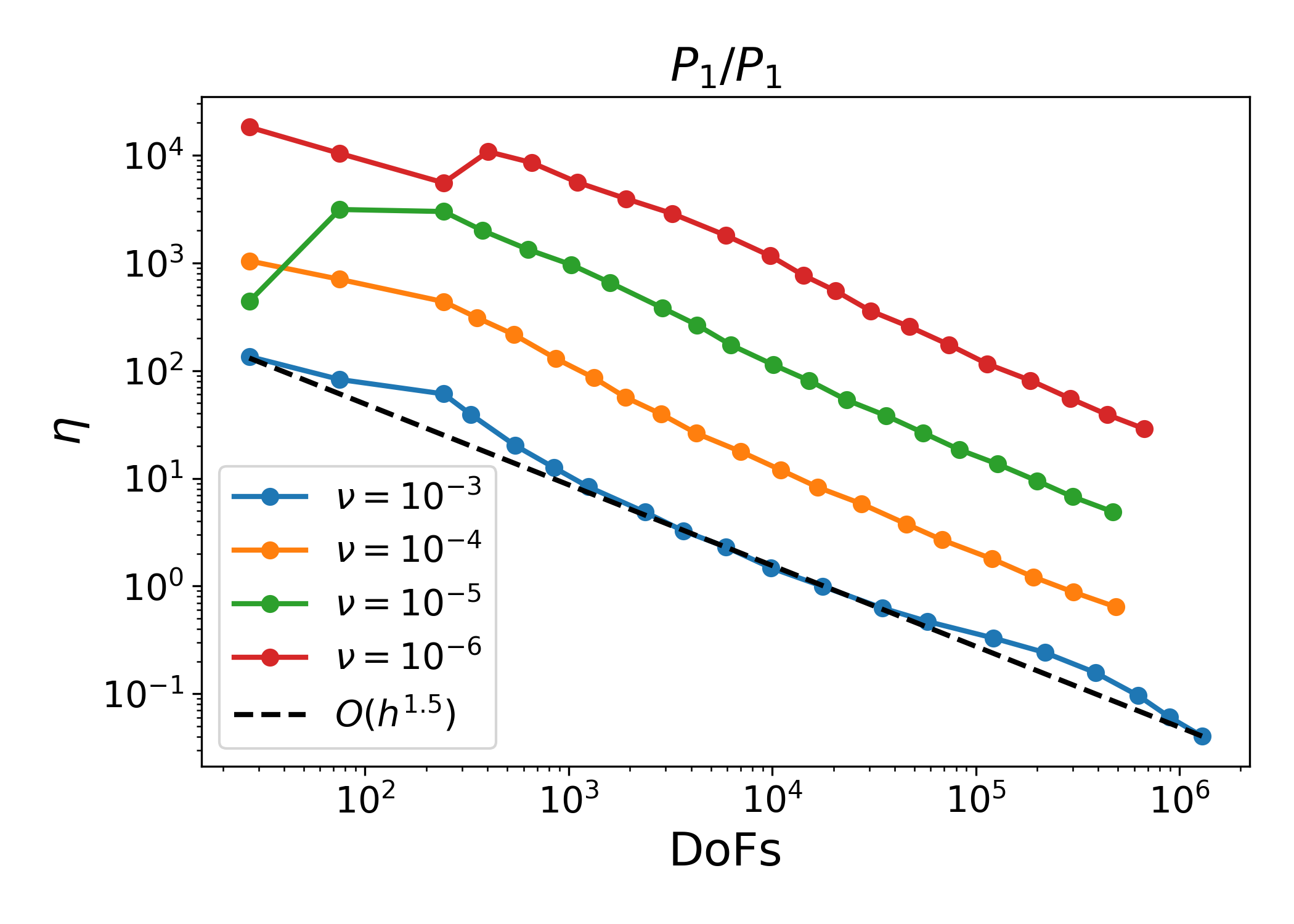}
  \hspace*{0.5em}
  \includegraphics[width=0.32\textwidth]{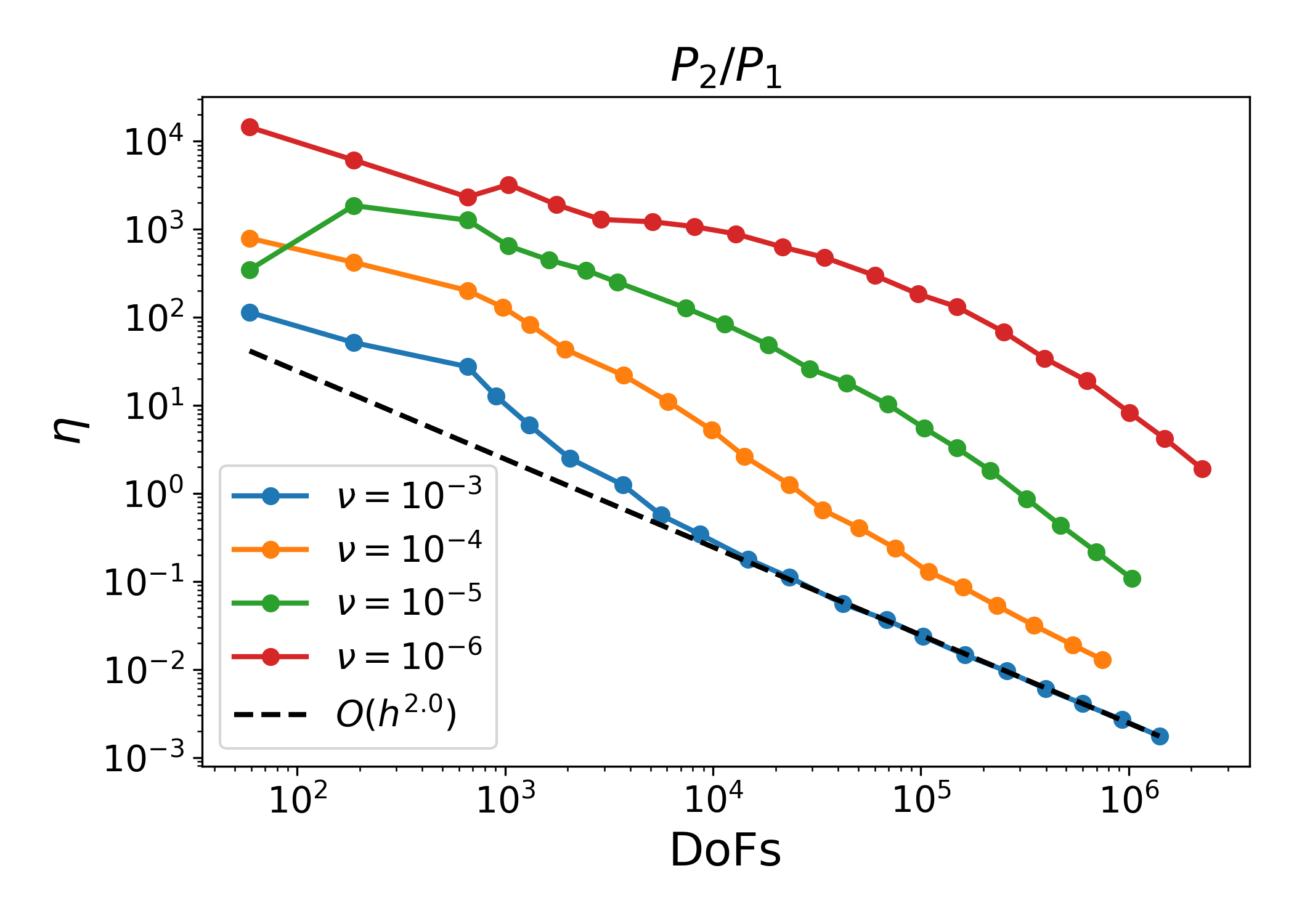}
  \hspace*{0.5em}
  \includegraphics[width=0.32\textwidth]{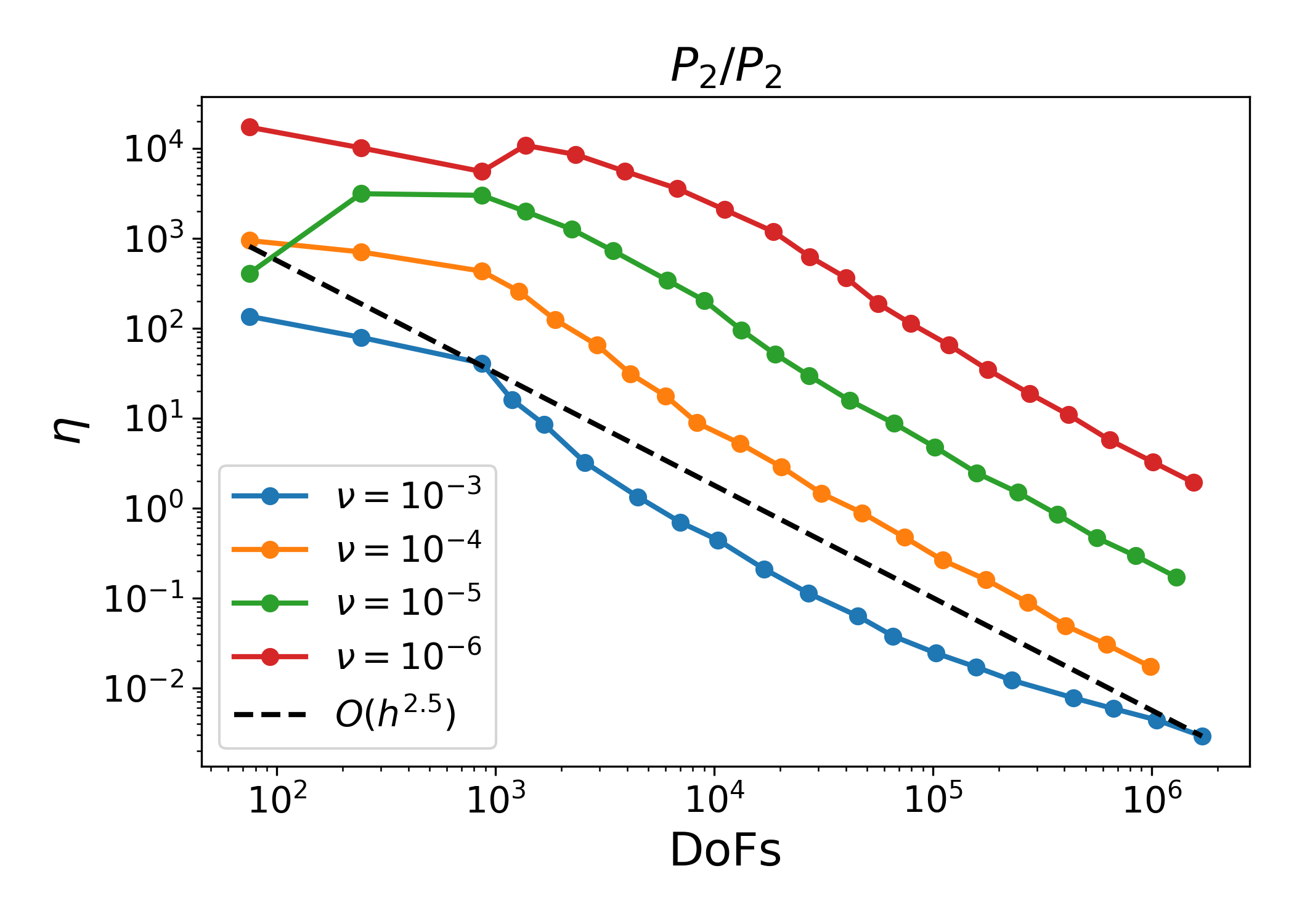}}
 \centerline{\includegraphics[width=0.32\textwidth]{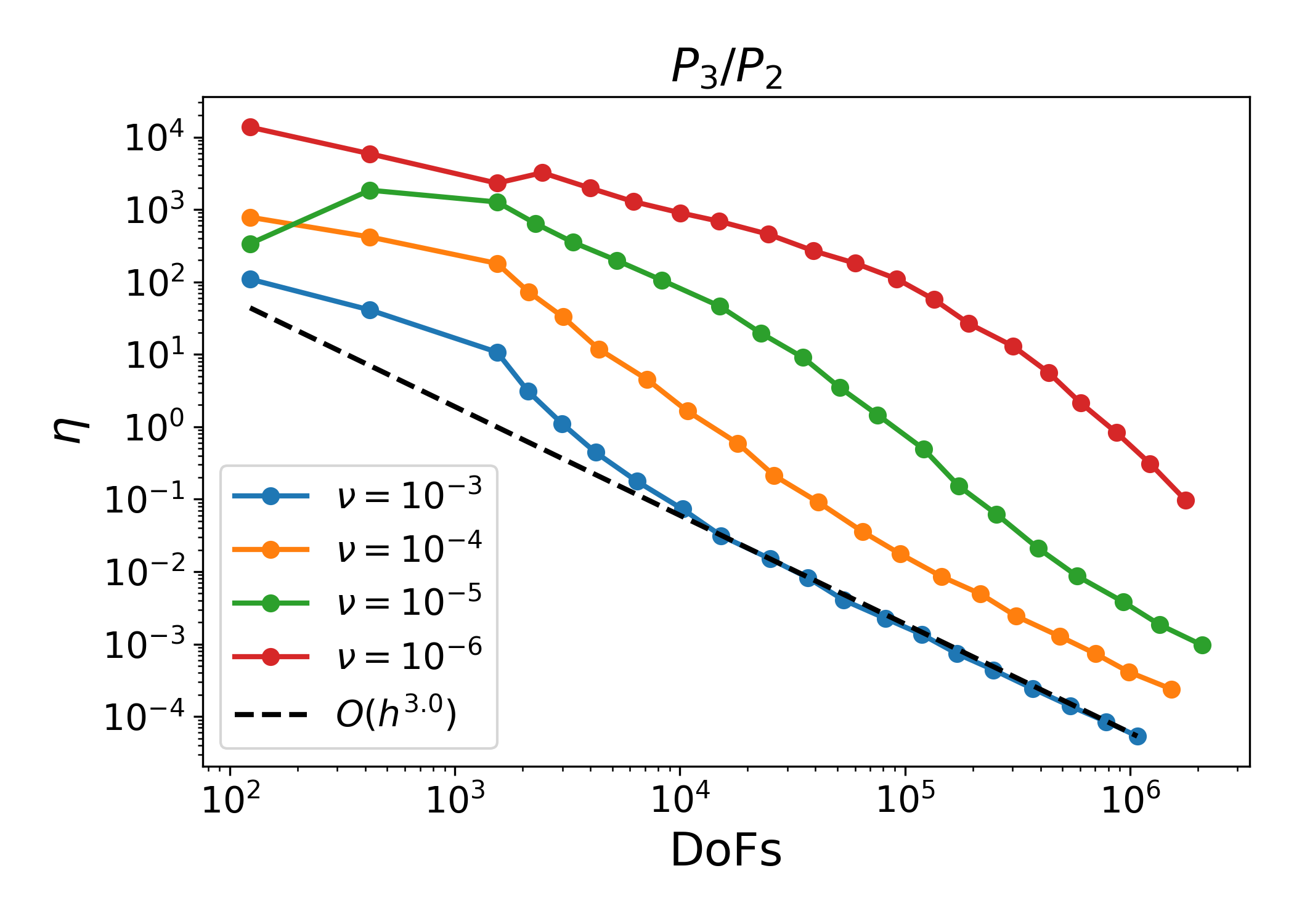}
  \hspace*{0.5em}
  \includegraphics[width=0.32\textwidth]{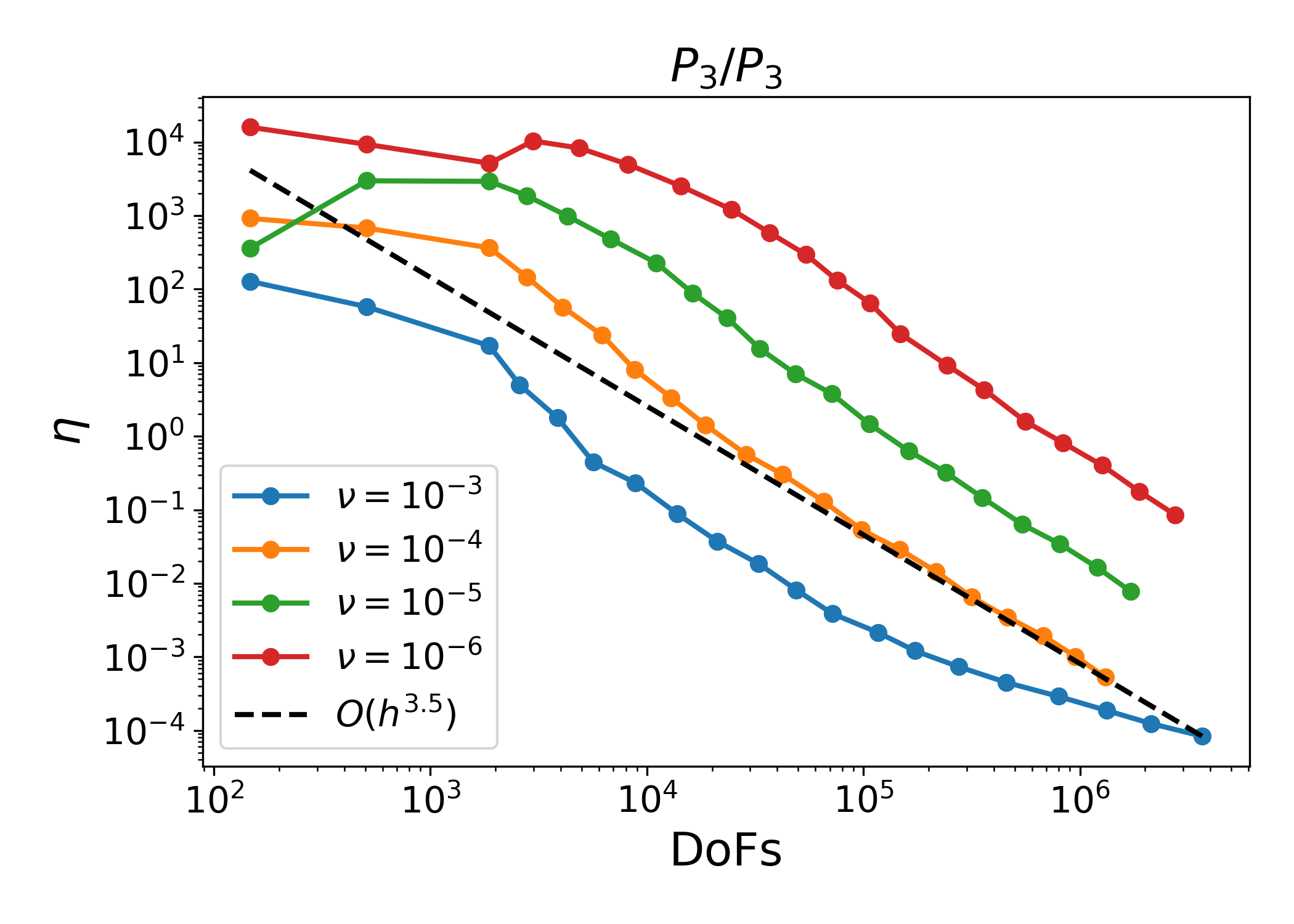}}
  \caption{Example~\ref{sec:layer}. The a posteriori error estimator $\eta$ 
  for different pairs of finite element spaces and different values of the viscosity coefficient.}
  \label{fig:bl-all-estimate}
\end{figure}

\begin{figure}[t!]
  \centerline{
  \includegraphics[width=0.32\textwidth]{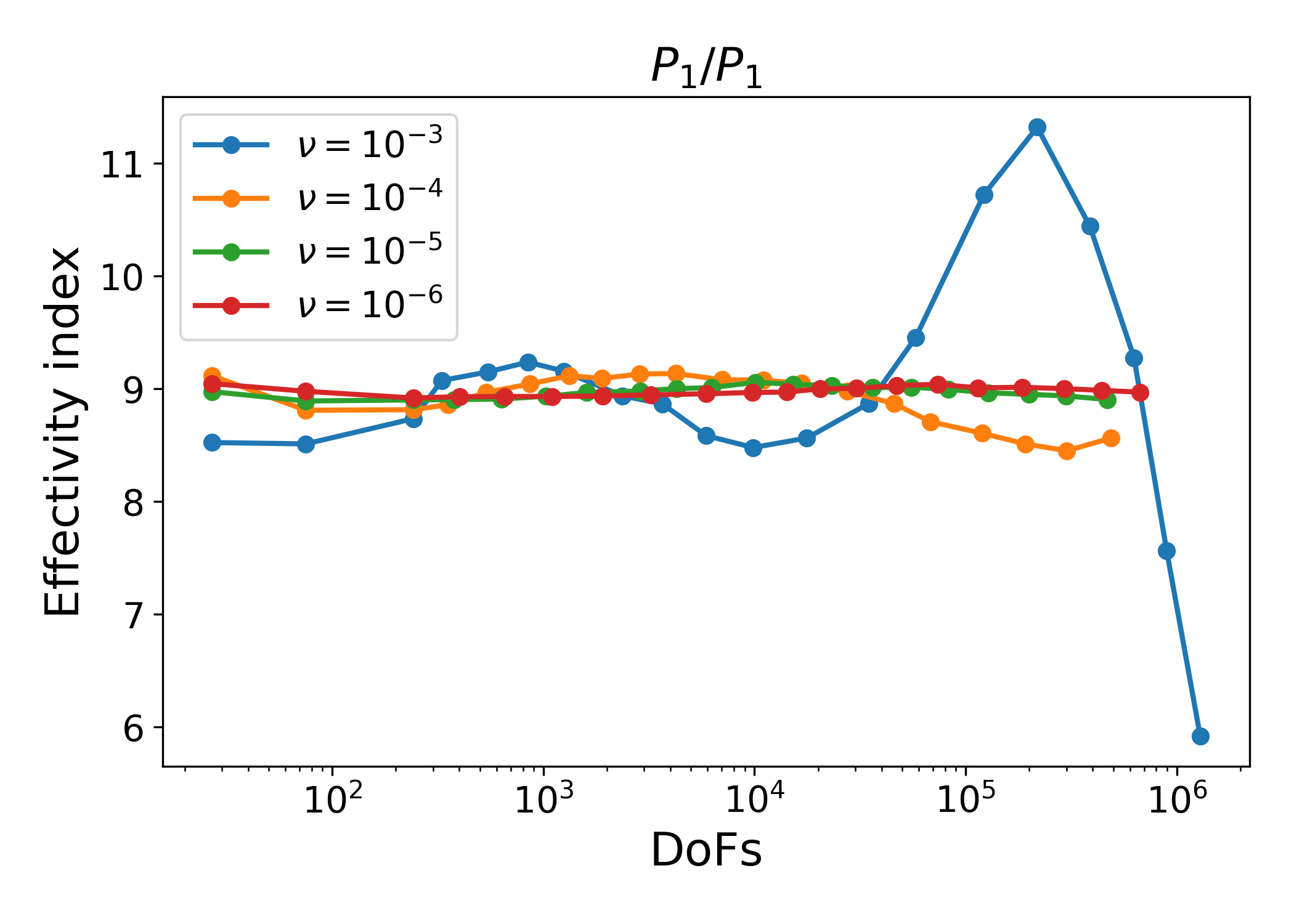}
  \hspace*{0.5em}
  \includegraphics[width=0.32\textwidth]{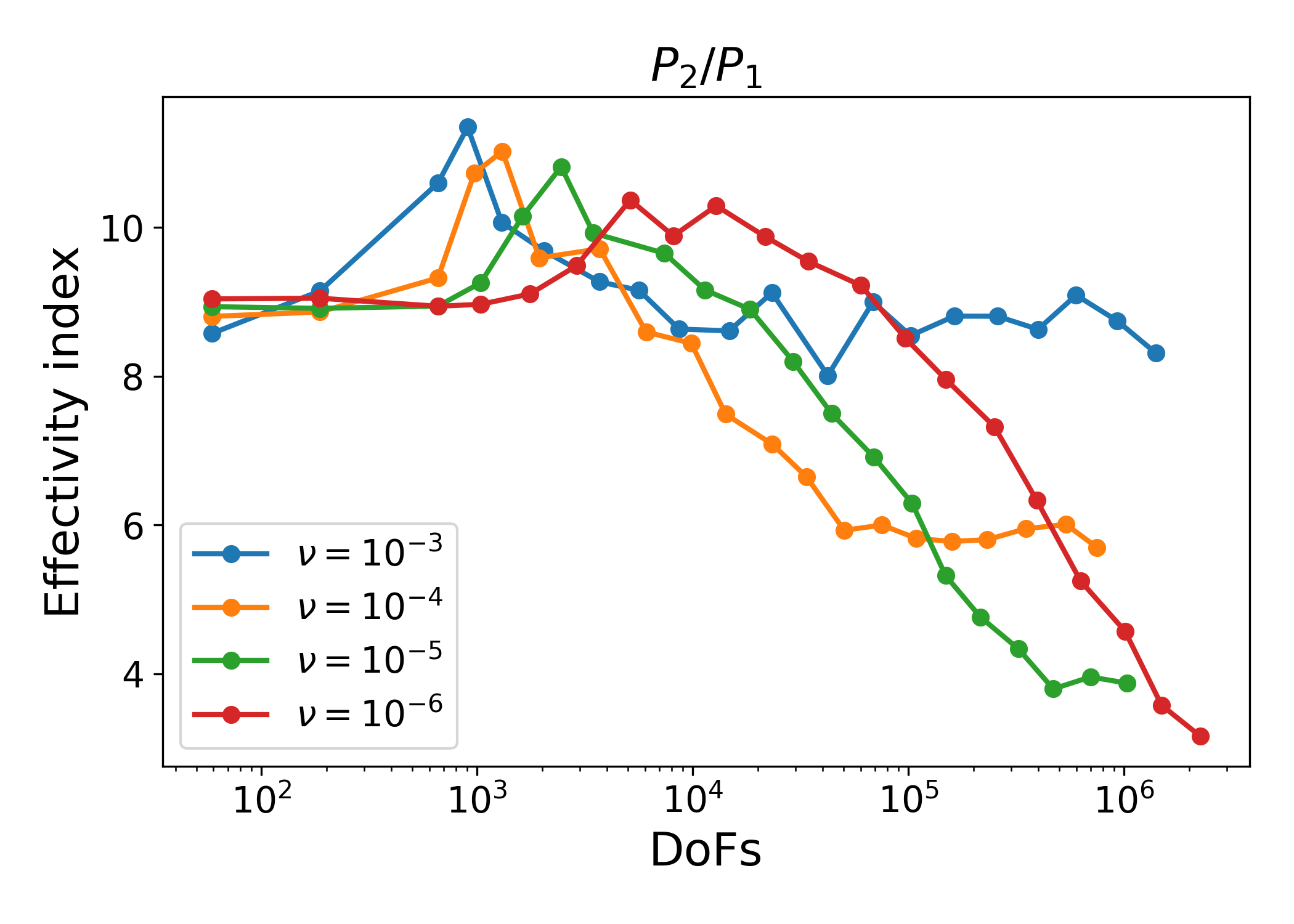}
  \hspace*{0.5em}
  \includegraphics[width=0.32\textwidth]{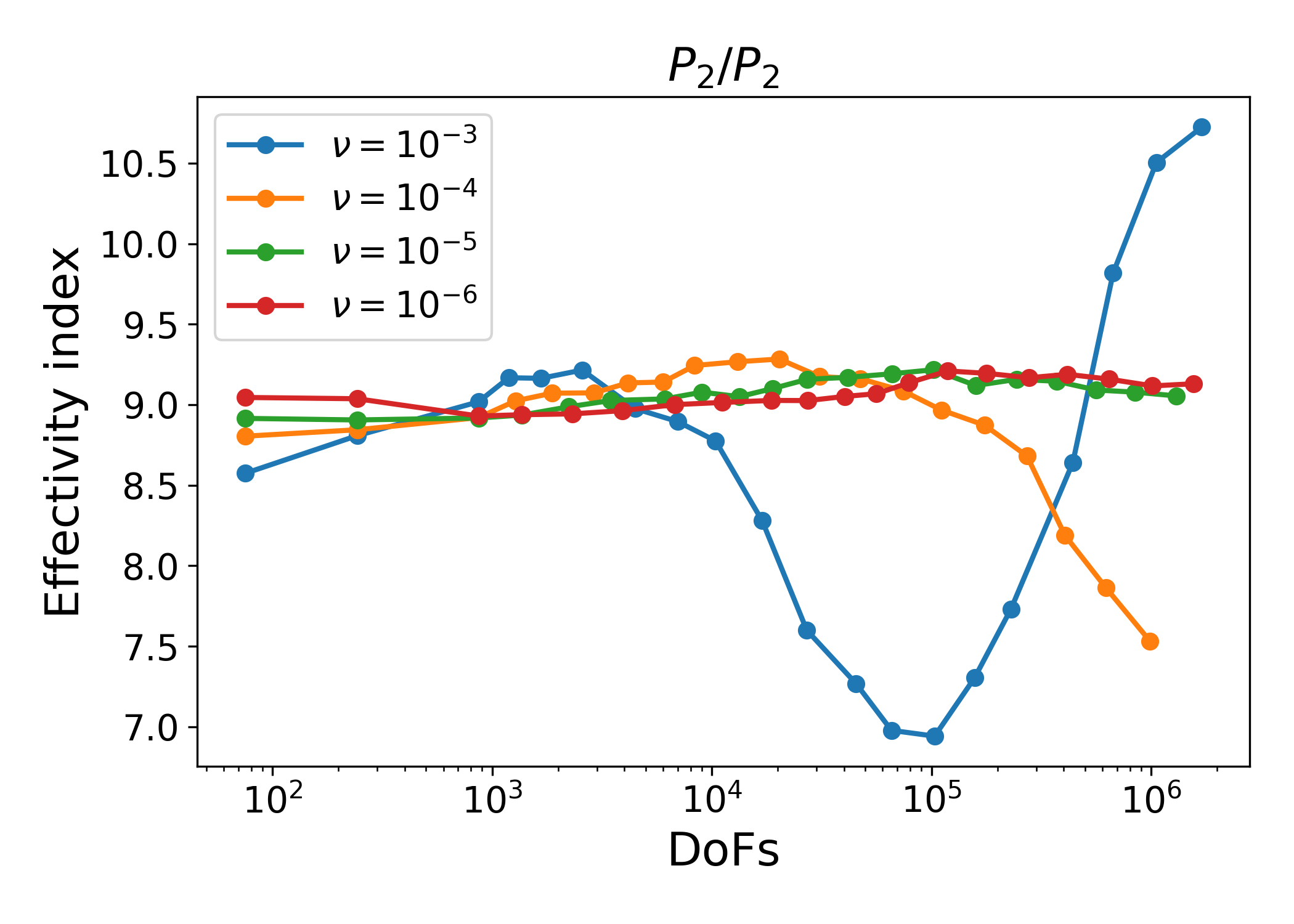}}
  \centerline{
  \includegraphics[width=0.32\textwidth]{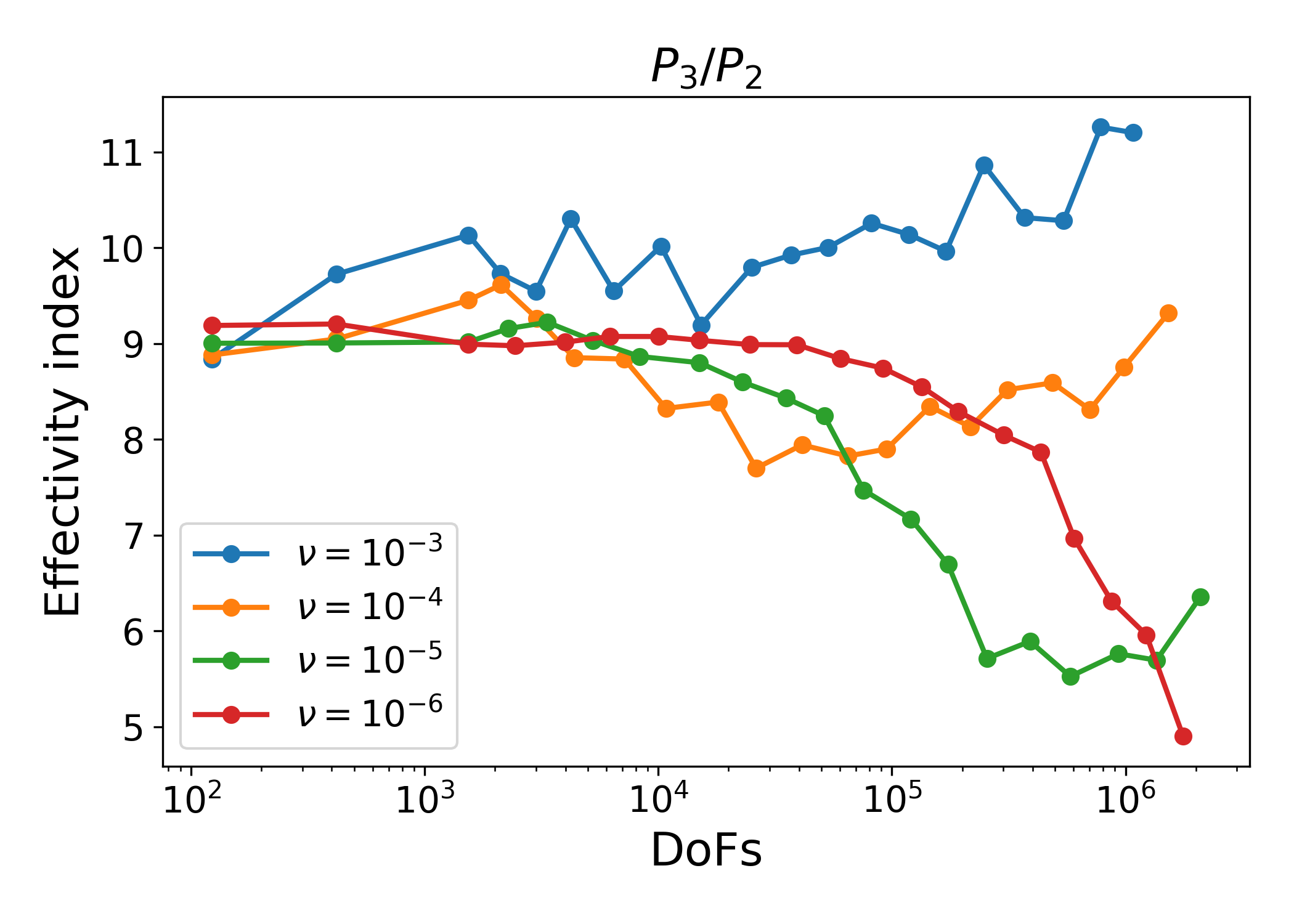}
  \hspace*{0.5em}
  \includegraphics[width=0.32\textwidth]{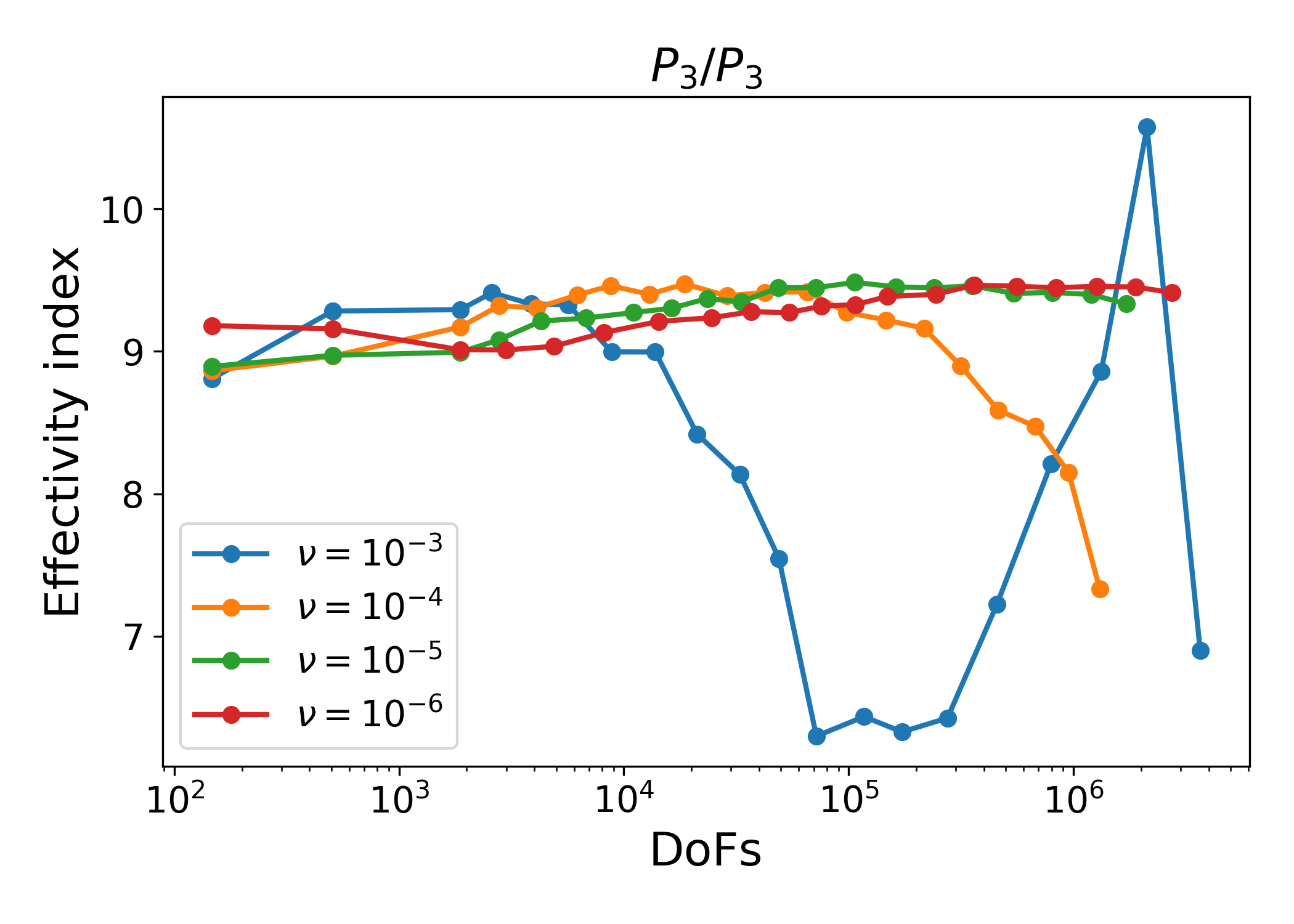}}
  \caption{Example~\ref{sec:layer}. Effectivity indices for different pairs of finite element spaces and different values of the viscosity coefficient.}
  \label{fig:bl-all-effectivity}
\end{figure}

Finally, Figure~\ref{fig:bl-all-indicators} presents exemplarily an investigation
of the importance of the individual terms of the error estimator $\eta$. One can see that 
again four out of the five terms are usually of the same order, whereas $\eta_F$ is considerably 
smaller. 

\begin{figure}[t!]
  \centerline{
  \includegraphics[width=0.32\textwidth]{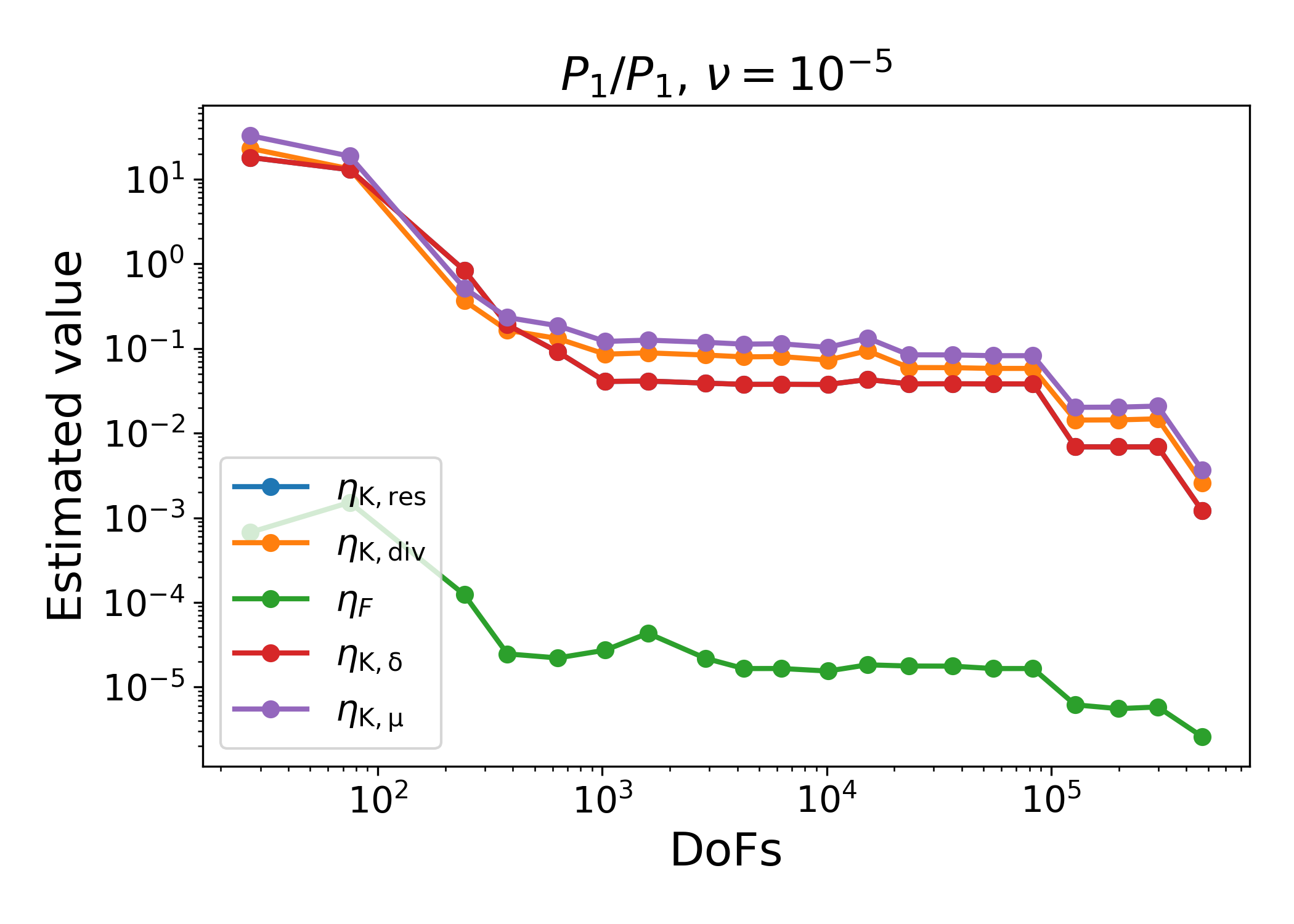}
  \hspace*{0.5em}
  \includegraphics[width=0.32\textwidth]{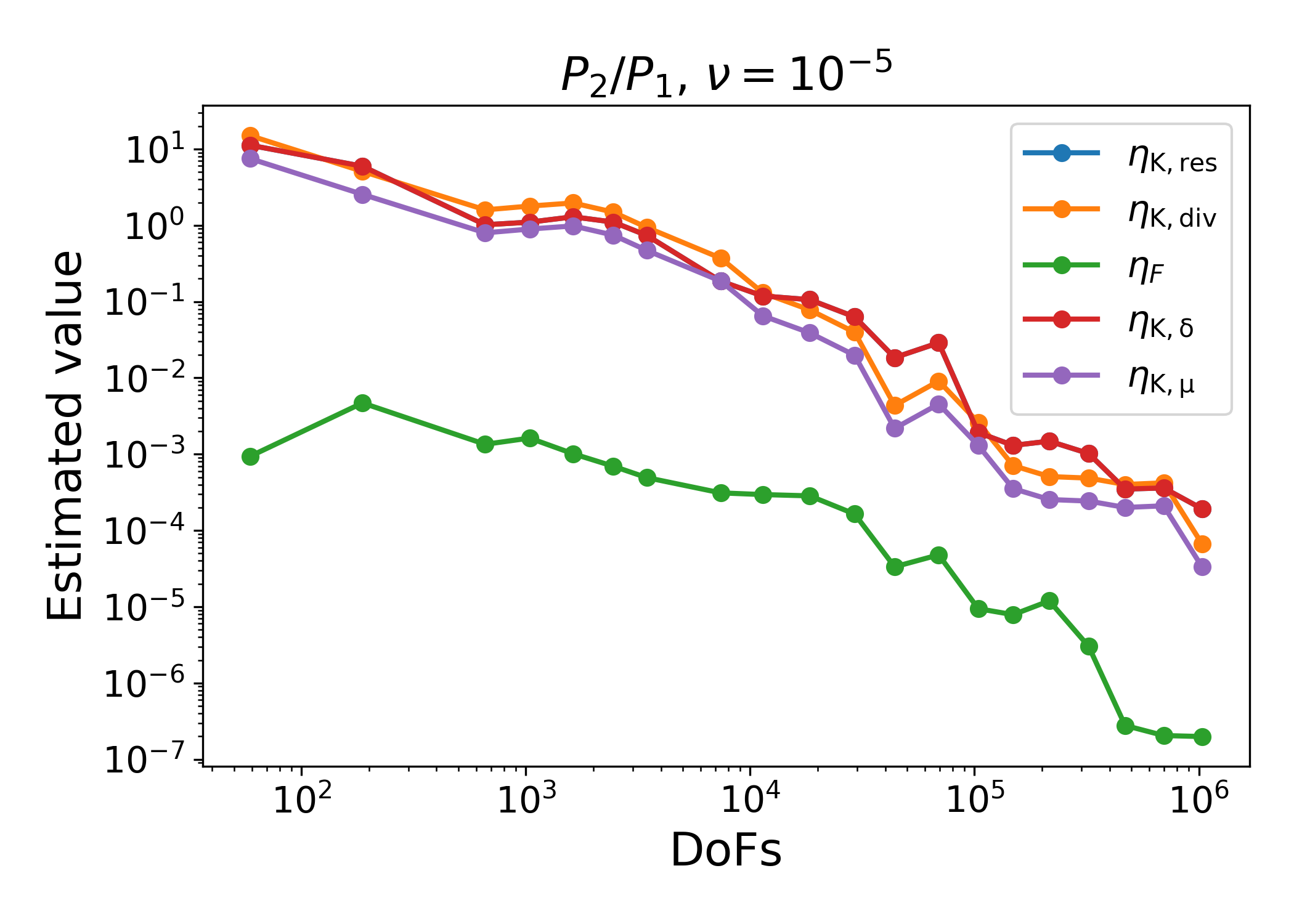}
  \hspace*{0.5em}
  \includegraphics[width=0.32\textwidth]{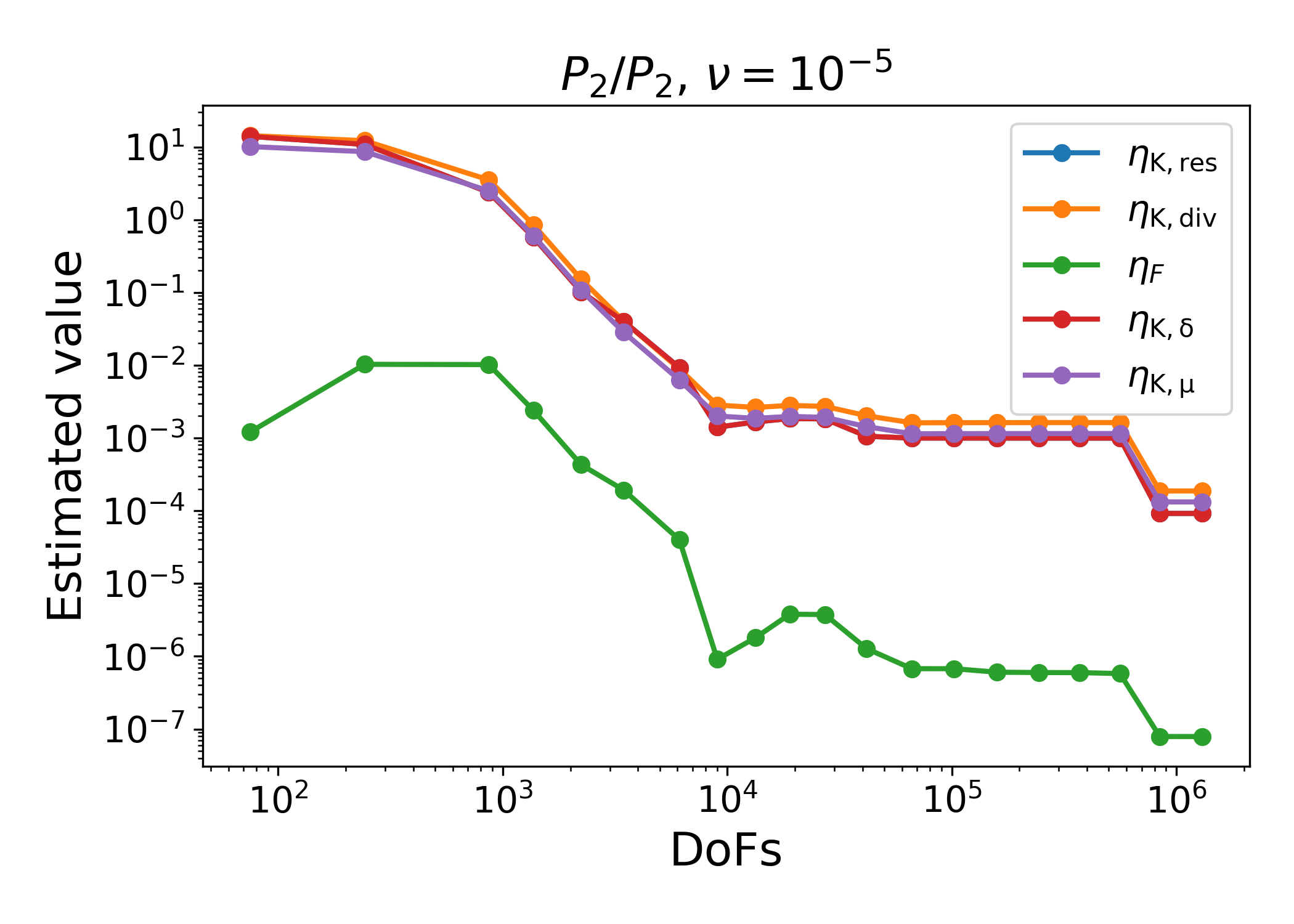}}
  \centerline{\includegraphics[width=0.32\textwidth]{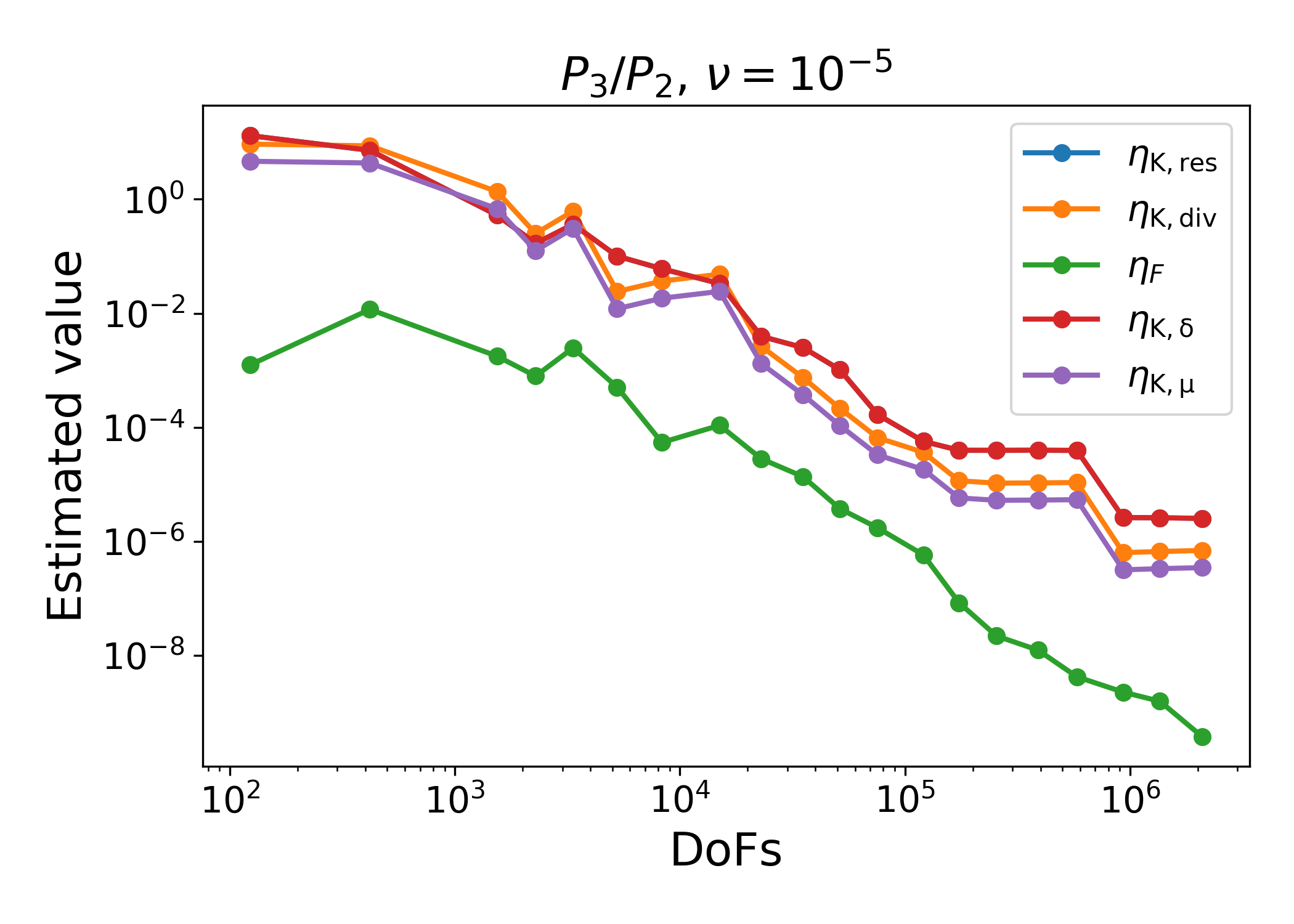}
  \hspace*{0.5em}
  \includegraphics[width=0.32\textwidth]{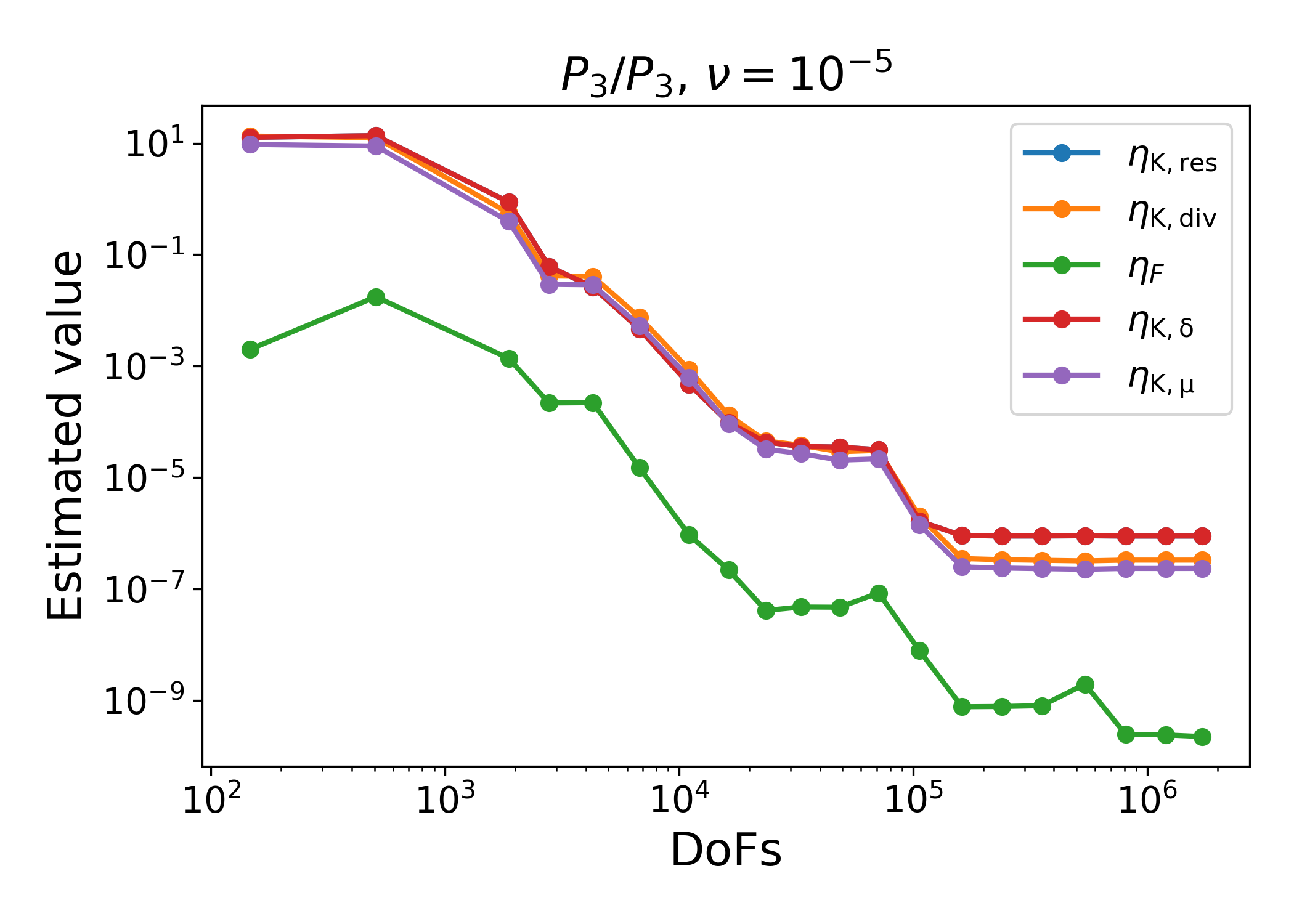}}
  \caption{Example~\ref{sec:layer}. Contributions of the individual parts of $\eta$, simulations with  $\nu=10^{-5}$.}
  \label{fig:bl-all-indicators}
\end{figure}

\subsection{Summary of the Numerical Studies}

This section studied two examples with different properties and setup:
\blist
\item[$\bullet$] Smooth solution with $\no{\(\bu-\bu_h,p-p_h\)}{\mathrm{spg}}$ independent of $\nu$ vs. 
solution with boundary layers and $\no{\(\bu-\bu_h,p-p_h\)}{\mathrm{spg}}$ increasing if $\nu$ 
becomes smaller,
\item[$\bullet$] uniform vs. adaptive grid refinement.
\end{list}
In all cases, independent of the pair of finite element spaces and the size of the viscosity coefficient in 
the convection-dominated regime, 
the effectivity index took values of around $9$. These results support the claimed robustness of the 
error estimator \eqref{eq:eta}.

\section{Some Considerations on the Navier--Stokes Equations}\label{sec:nse}

This section presents a numerical study that investigates the behavior of a modified a posteriori error 
estimator applied to the SUPG/PSPG/grad-div method of the incompressible Navier--Stokes problem. 

The Oseen problem \eqref{eq:weak-1}--\eqref{eq:weak-2} is a popular model problem for the numerical 
analysis of convection-dominated incompressible flow problems since it is a linear problem. The actual 
model for steady-state incompressible flows is the set of the incompressible Navier--Stokes equations 
given, already in weak form, by: 
Find $(\bu,p) \in \bV \times Q$ such that 
\begin{eqnarray}
\nu(\nabla\bu,\nabla\bv) + ((\bu\cdot \nabla)\bu,\bv)  - (p,\nabla \cdot \bv)
 &=& (\bff,\bv) + (\bg,\bv)_{\gN}
 \quad \foralls \bv\in \bV, \label{eq:nse-1} \\
(\nabla\cdot\bu,q) & = & 0 \quad \foralls q\in Q. \label{eq:nse-2}
\end{eqnarray}

An a priori error analysis of a stabilized SUPG/PSPG/grad-div finite element method for 
problem \eqref{eq:nse-1}--\eqref{eq:nse-2} is presented in \cite{TV96}. This analysis does not 
require that the velocity and pressure finite element spaces satisfy a discrete inf-sup condition. 
It is proposed to choose the stabilization parameters for all pairs of spaces to be 
$\delta_K \sim h_K^2$ and $\mu_K \sim 1$. While these are the standard asymptotic choices for 
inf-sup stable pairs of finite element spaces for the Oseen problem, compare \eqref{eq:para_inf_sup}, they correspond 
for equal order pairs to a recommendation in the situation where the problem is not strongly 
convection-dominated, see \eqref{eq:para_equal}.
In \cite{TV96}, the existence and uniqueness of a solution to the 
discrete problem is proved for small data (or large viscosity coefficients). Then, an a priori
error analysis is performed for the norm 
\begin{equation}\label{eq:norm_TV96}
\left(\no{\nabla\bv}{\bL^2(\Omega)}^2+ \no{q}{L^2(\Omega)}^2  
+\sum_{K\in\mathcal T_h}\delta_K \no{\nabla q}{\bL^2(K)}^2\right)^{1/2},
\end{equation}
where here the case of continuous pressure finite element spaces is considered. 
Comparing this norm with $\no{\(\bv,q\)}{\mathrm{spg}}$ defined in \eqref{eq:norm_spg}, one can see 
first that \eqref{eq:norm_TV96} does not contain the viscosity coefficient. The motivation in \cite{TV96} is that the 
case $\nu\to0$ is not of interest for simulations of the steady-state Navier--Stokes equations, since the 
solution is not unique for small viscosity coefficients. Second, the grad-div term is missing 
in \eqref{eq:norm_TV96}. And last, the convective term of the SUPG stabilization is missing in 
\eqref{eq:norm_TV96}, since including this term with $\bb$ replaced by $\bu_h$ would not define a norm. 
The a priori error analysis presented in \cite{TV96} leads for smooth solutions and  
$P_{k}/P_{k-1}$, $k\ge 2$, and $P_k/P_k$, $k\ge 1$, 
to the order of convergence $k$ in the norm \eqref{eq:norm_TV96}.

Having an a posteriori error estimator for the SUPG/PSPG/grad-div method for the Oseen problem at hand, 
a natural question is how it performs, after appropriate modifications, for the same discretization 
of the Navier--Stokes problem \eqref{eq:nse-1}--\eqref{eq:nse-2}. These modifications comprise:
\begin{list}{}{\itemsep0.0ex\parsep0.1ex\topsep0.2ex\leftmargin1.6em
\labelwidth1.3em}
\item[$\bullet$] to define the mesh cell residual by 
\[
\br_K(\bu_h,p_h) = \left.(\bff + \nu \Delta \bu_h - (\bu_h \cdot \nabla)\bu_h - \nabla p_h )\right|_K
\quad \foralls K \in \mathcal T_h,
\]
\item[$\bullet$] to take $\sigma_0 = 0$ in $\eta_{K,\mathrm{res}}$ and $\eta_F$, so that the corresponding terms
can be removed from the minimum, 
\item[$\bullet$] to replace  $\no{\bb}{L^\infty(F)}$ in $\eta_F$ by $\no{\bu_h}{L^\infty(F)}$.
\end{list}
Since the error estimate with this estimator will potentially depend on the value of the 
viscosity coefficient and on the contribution from the grad-div term, we think that \eqref{eq:norm_TV96}
is not an appropriate norm to compare the a posteriori estimates with. The definition of such a norm 
should resemble the norms  $\no{\(\bv,q\)}{\mathrm{spg}}$ or $\no{\(\bv,q\)}{\mathrm{spg},p}$ as much as possible
and we think that an appropriate norm is 
\begin{equation*}
\no{\(\bv,q\)}{\mathrm{spg,nse}}=\left(\nu 
\no{\nabla\bv}{\bL^2(\Omega)}^2+  \nu \no{q}{L^2(\Omega)}^2  + 
\sum_{K\in\mathcal T_h} \mu_K \no{\dive \bv}{L^2(K)}^2 
+\sum_{K\in\mathcal T_h}\delta_K \no{\nabla q}{\bL^2(K)}^2\right)^{1/2}.
\end{equation*}

The numerical studies considered the Navier--Stokes problem with the right-hand side and the Dirichlet 
boundary condition prescribed such that \eqref{eq:smooth_u}--\eqref{eq:smooth_p} is the solution of \eqref{eq:nse-1}--\eqref{eq:nse-2}.

As can be seen in \eqref{eq:para_equal}, there is not a unique optimal asymptotic choice of the grad-div 
stabilization parameter for equal order pairs of spaces if the problem is not strongly dominated by 
convection. We observed in our numerical studies that with $\mu = 0.5 h_K$ one obtains notably more 
accurate solutions than with $\mu= 0.5$, while both choices fit into \eqref{eq:para_equal}. For this 
reason and the for sake of brevity, here only the former results will be presented. To be concrete, 
the SUPG stabilization parameter was chosen in all simulations to be $\delta_K = 0.5 h_K^2$ and the grad-div 
parameter to be $\mu=0.5$ for inf-sup stable pairs of spaces and $\mu = 0.5 h_K$ for equal order 
pairs. 

The  SUPG/PSPG/grad-div method for the Navier--Stokes equations \eqref{eq:nse-1}--\eqref{eq:nse-2} has in principle the 
form \eqref{eq:oseen_SUPG_PSPG_graddiv}--\eqref{eq:l_spg} with the obvious modification of the convective term. For the velocity
field in the SUPG test function, always the current iterate of a Picard iteration to solve the 
nonlinear problem was taken.

\begin{figure}[t!]
\centerline{\includegraphics[width=0.32\textwidth]{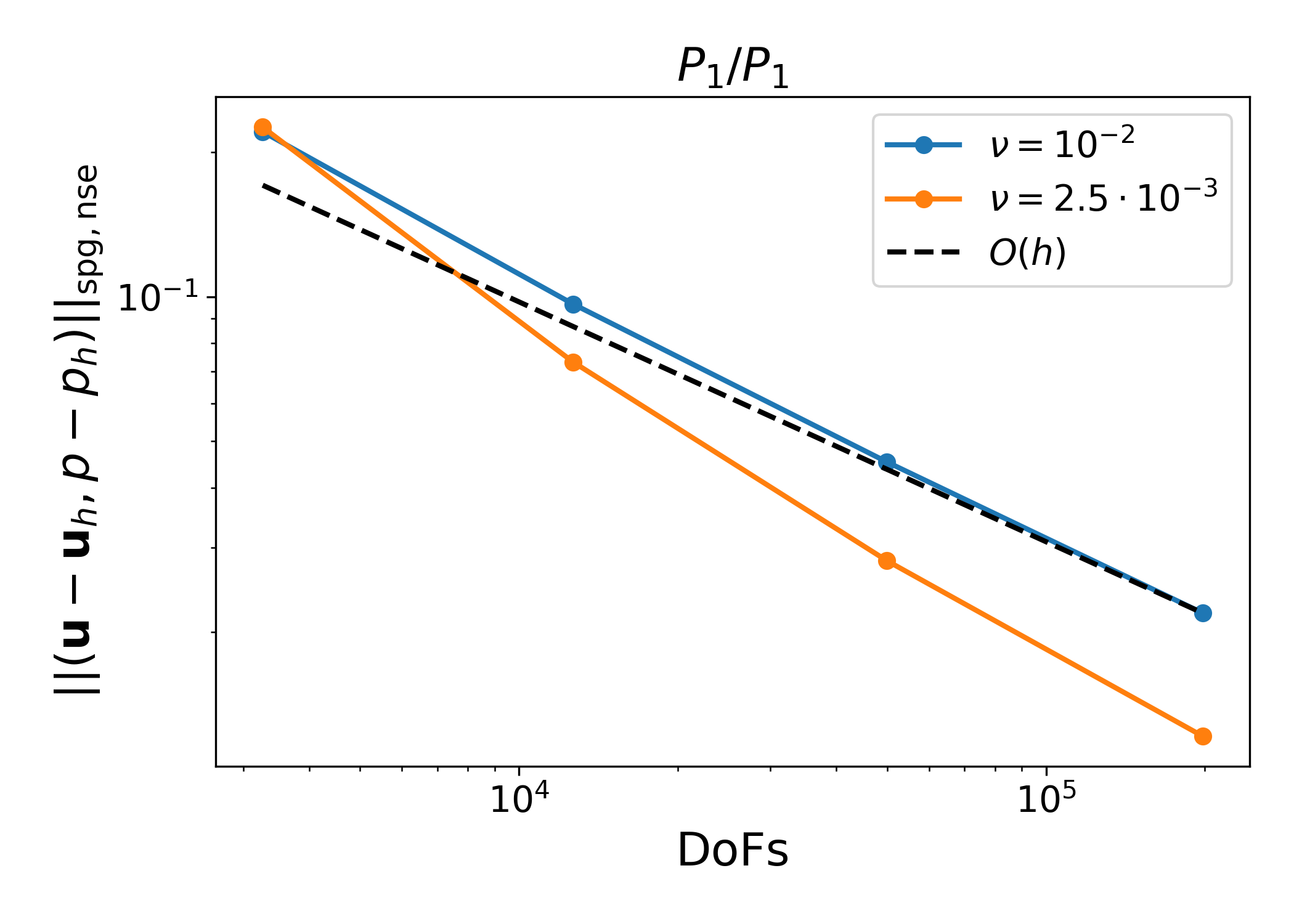}
  \hspace*{0.5em}
  \includegraphics[width=0.32\textwidth]{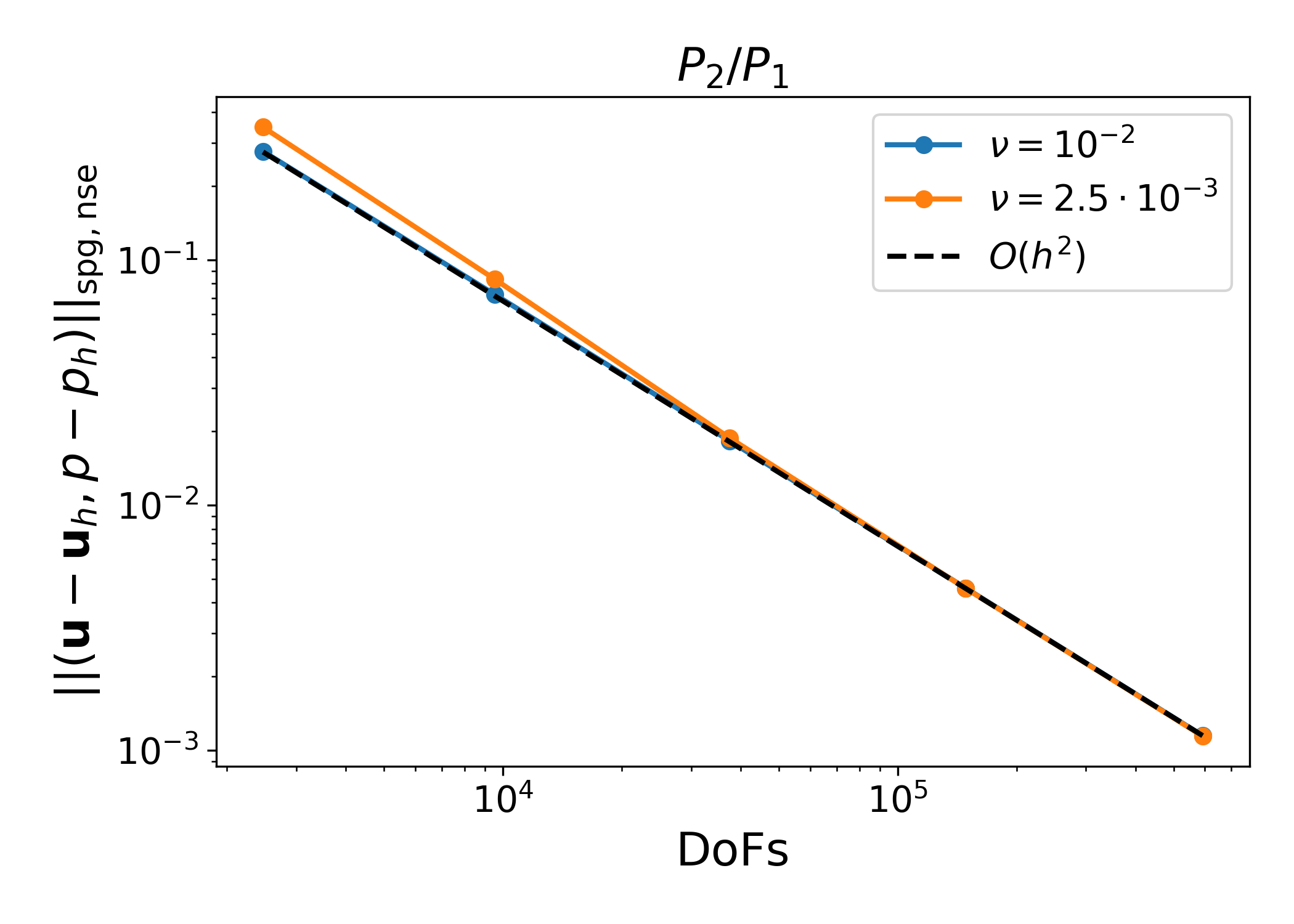}
  \hspace*{0.5em}
  \includegraphics[width=0.32\textwidth]{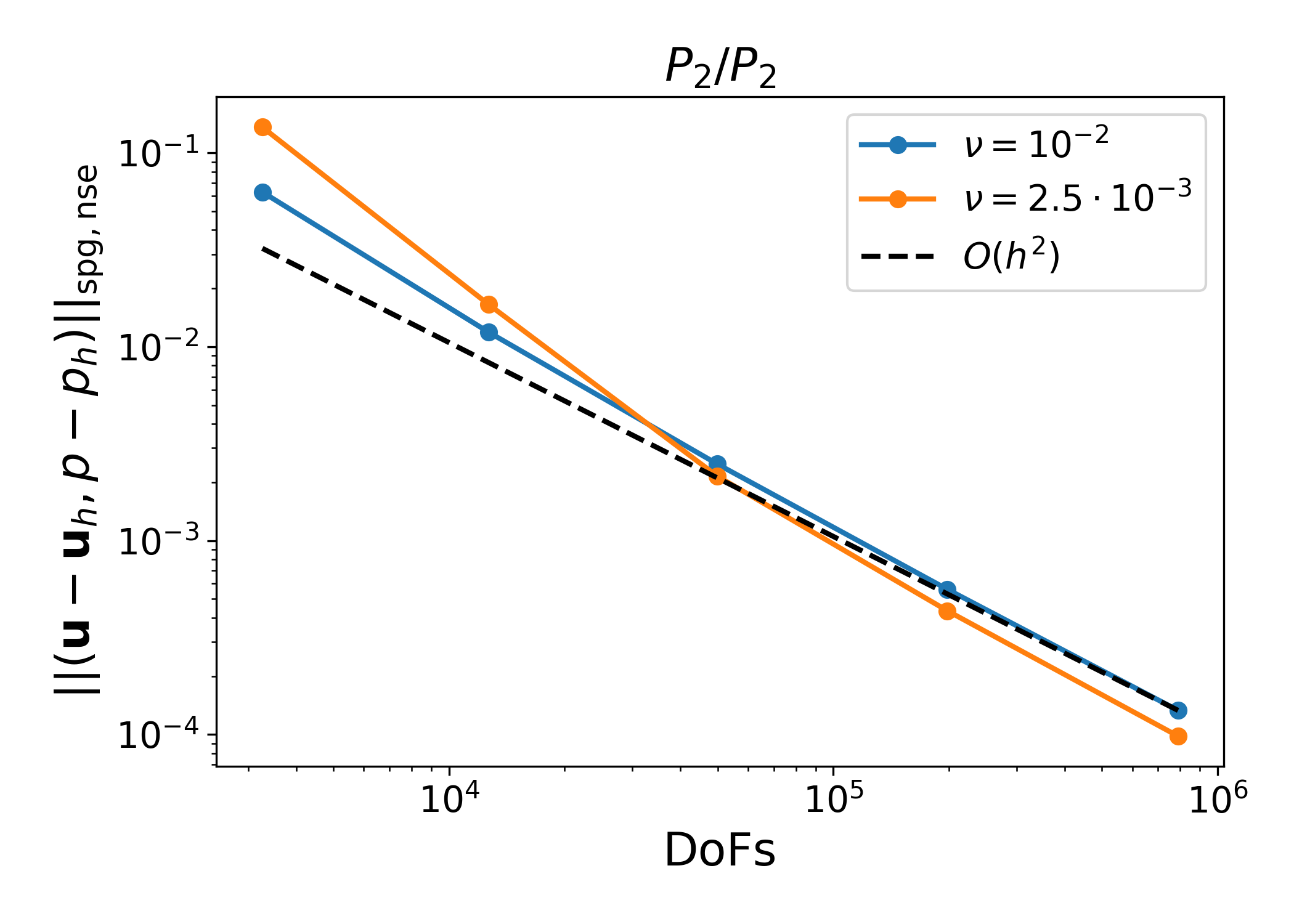}}
\centerline{\includegraphics[width=0.32\textwidth]{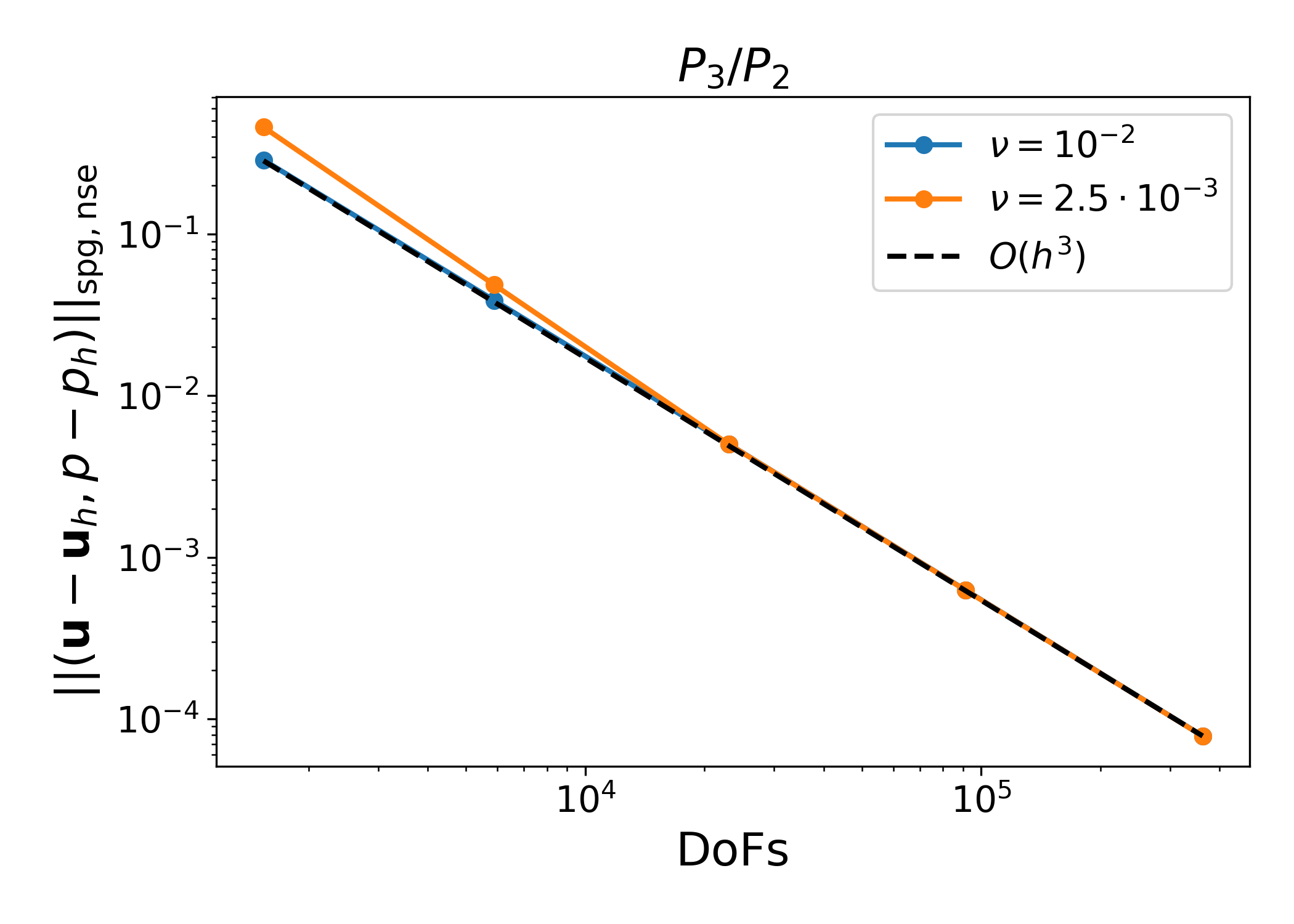}
\hspace*{0.5em}
\includegraphics[width=0.32\textwidth]{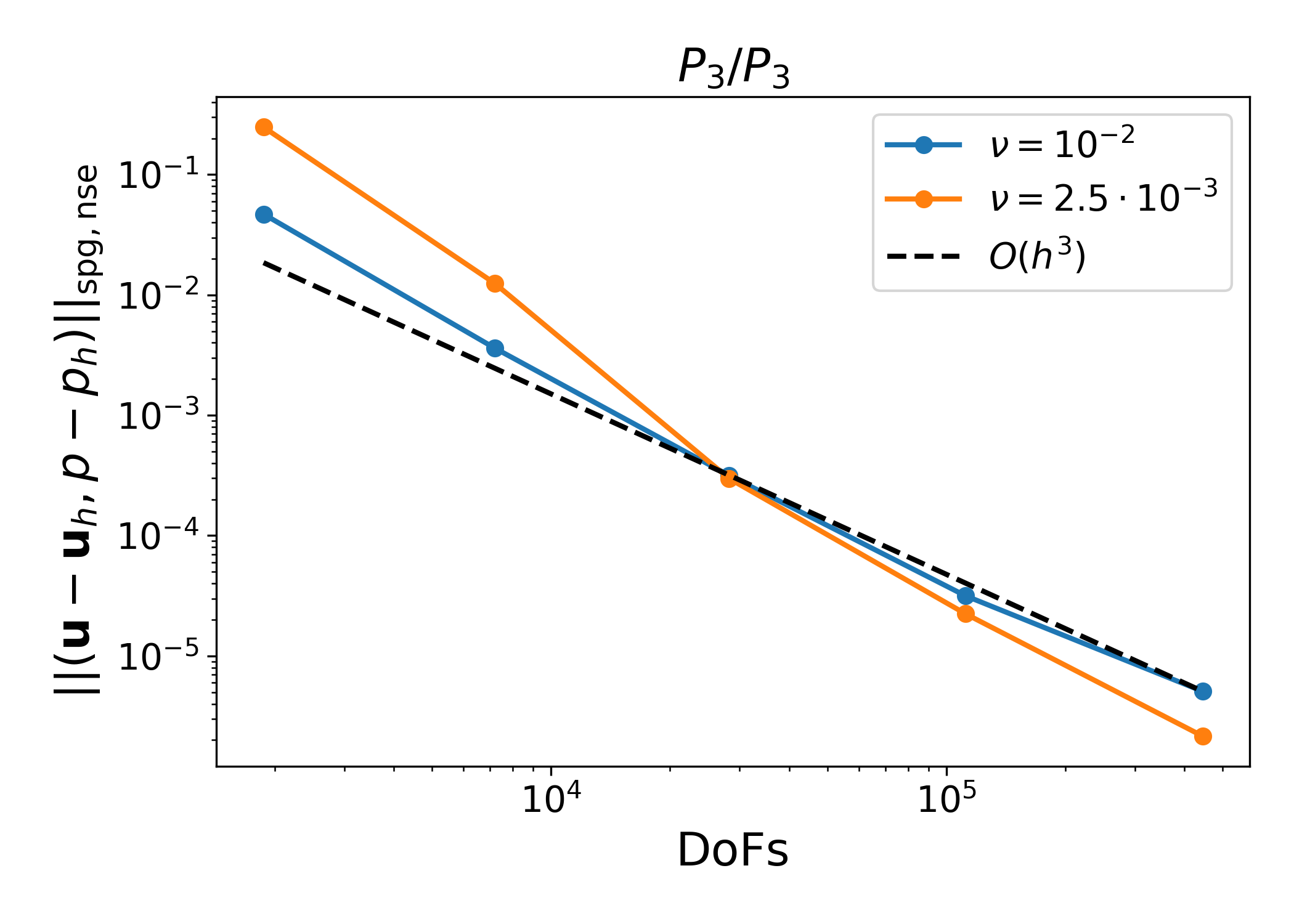}}
  \caption{Navier--Stokes problem. Errors $\no{\(\bu-\bu_h,p-p_h\)}{\mathrm{spg,nse}}$ 
  for different pairs of finite element spaces and different values of the viscosity coefficient.}
  \label{fig:nse-smooth-all-error}
\end{figure}

\begin{figure}[t!]
\centerline{\includegraphics[width=0.32\textwidth]{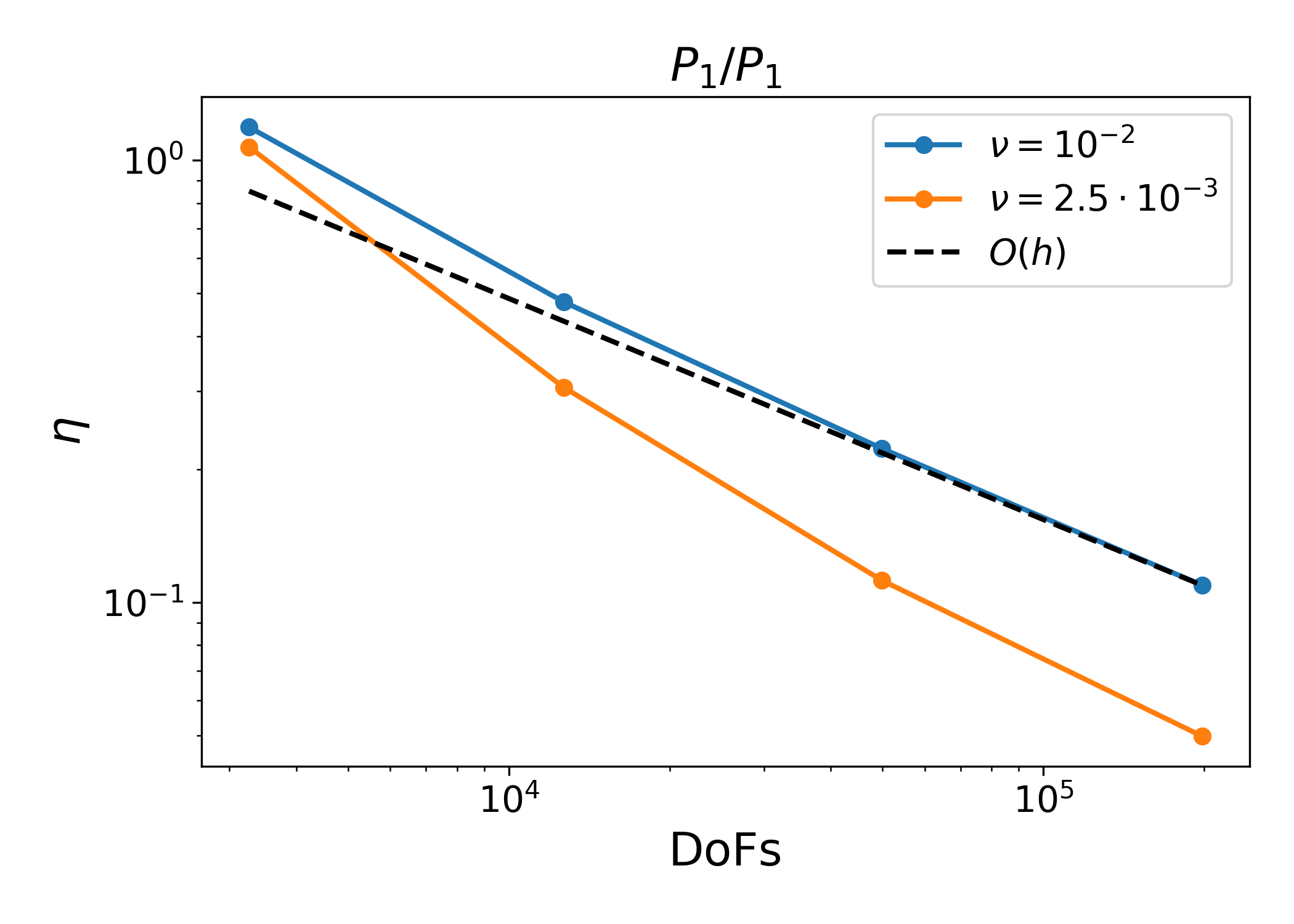}
\hspace*{0.5em}
\includegraphics[width=0.32\textwidth]{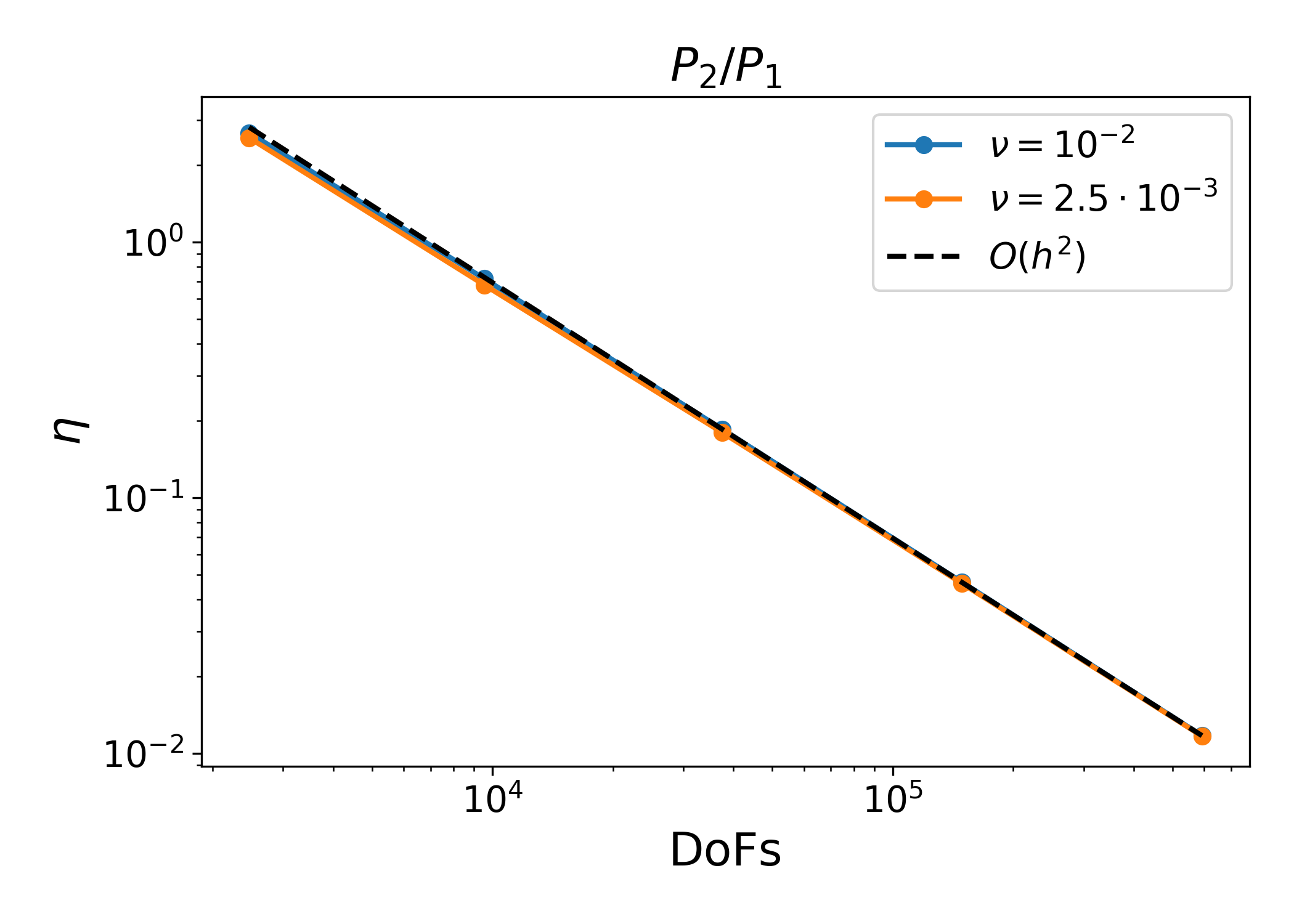}
\hspace*{0.5em}\includegraphics[width=0.32\textwidth]{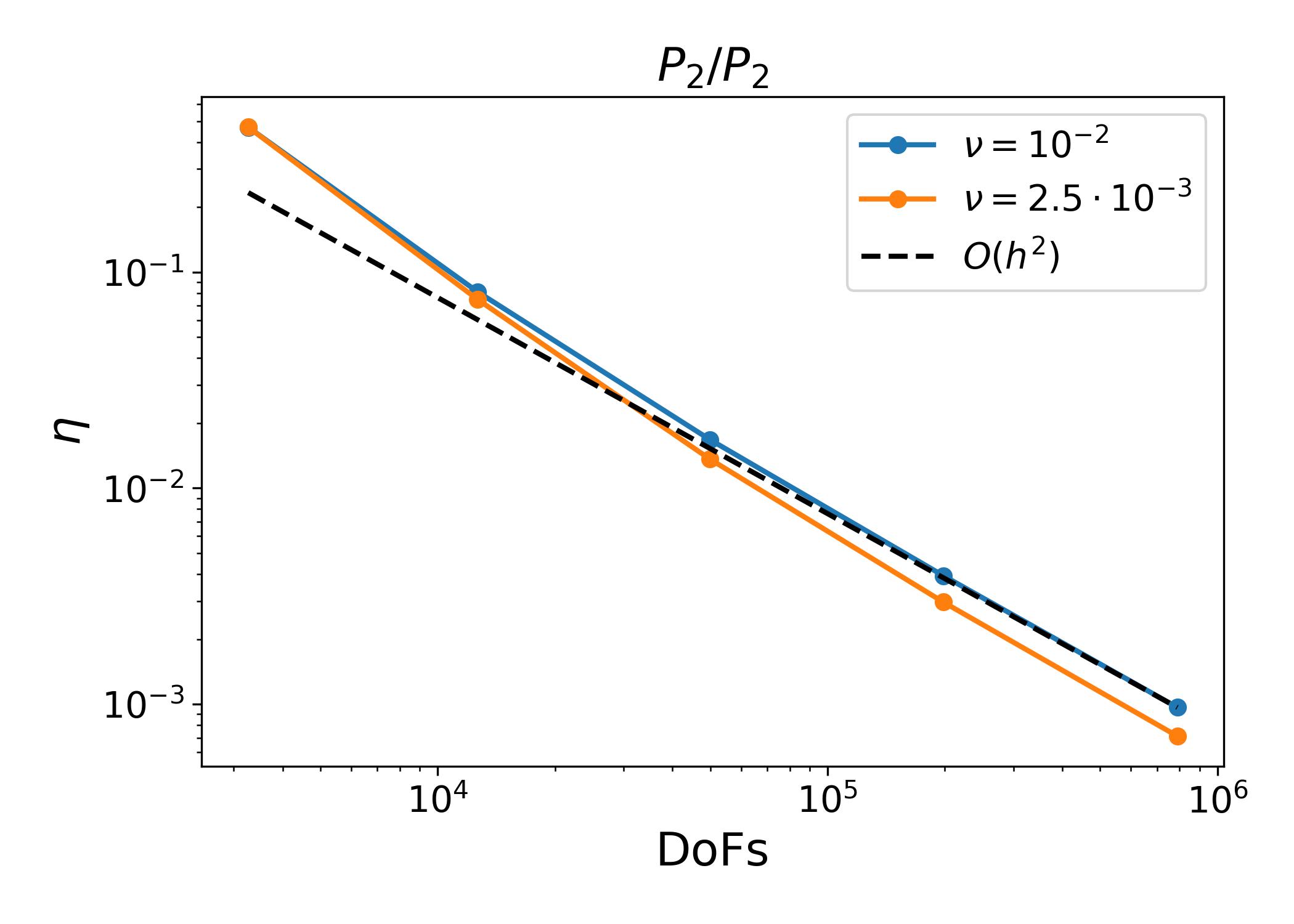}}
\centerline{\includegraphics[width=0.32\textwidth]{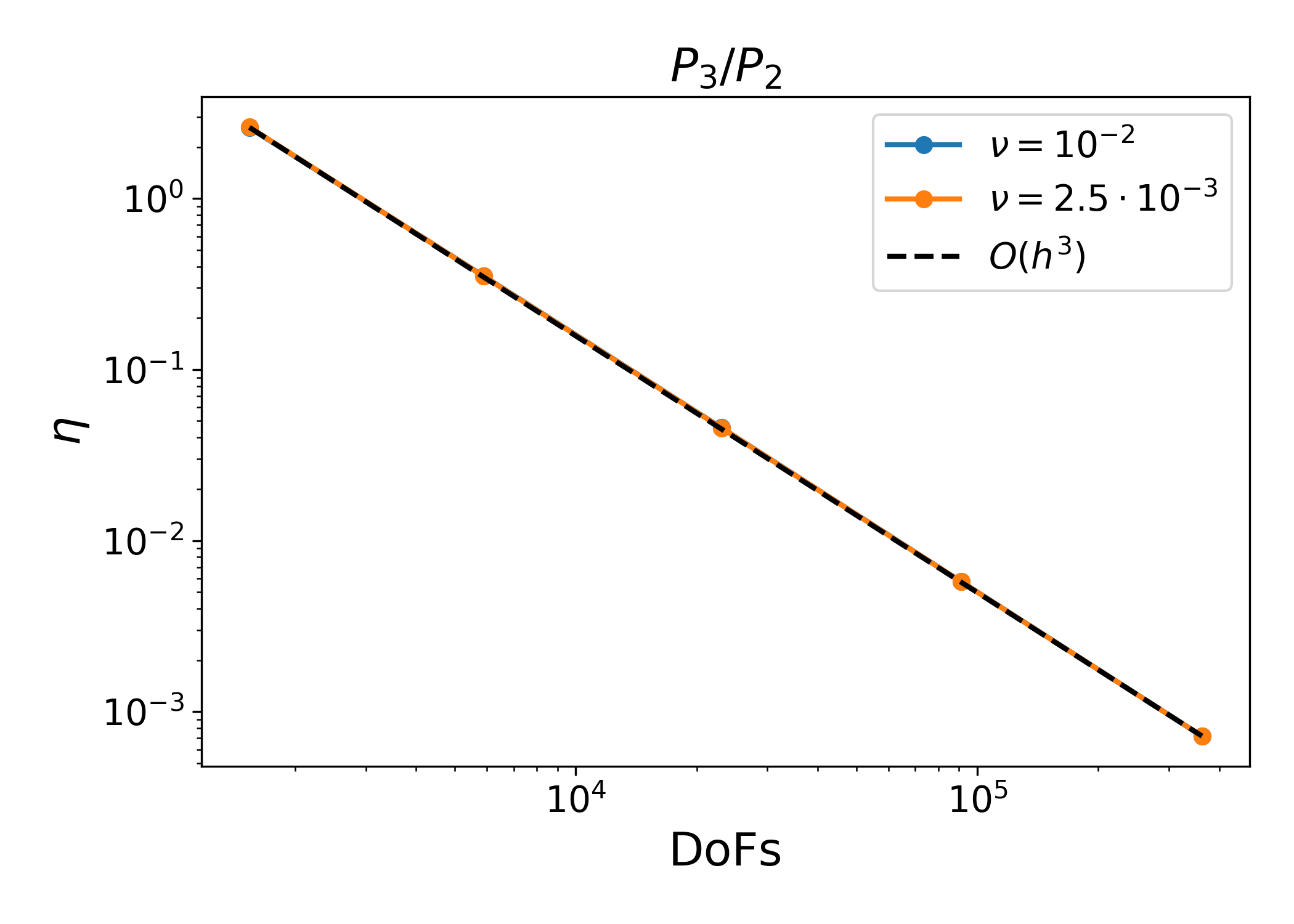}
\hspace*{0.5em}
\includegraphics[width=0.32\textwidth]{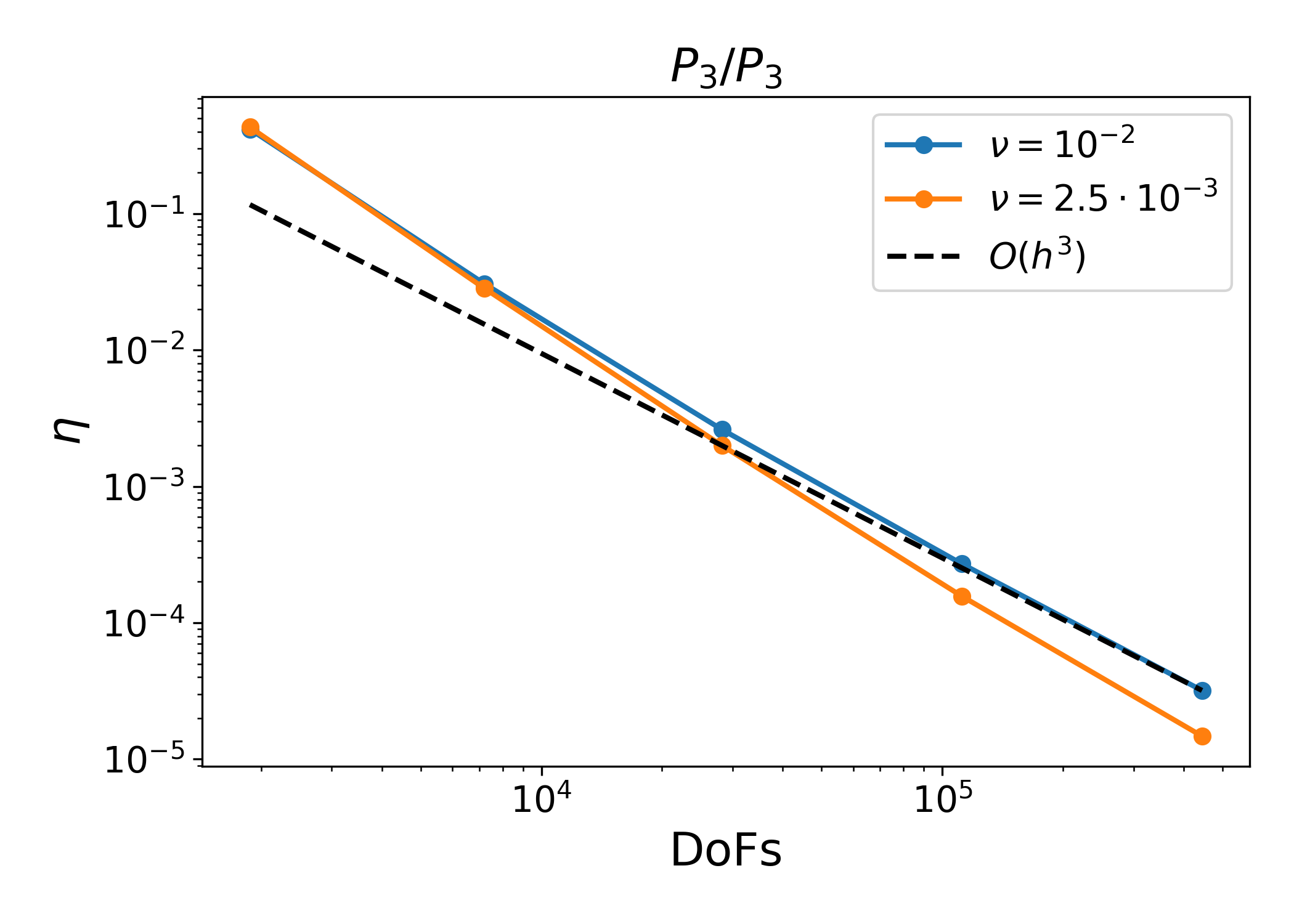}}
  \caption{Navier--Stokes problem. The a posteriori error estimator $\eta$ 
  for different pairs of finite element spaces and different values of the viscosity coefficient.}
  \label{fig:nse-smooth-all-estimate}
\end{figure}

\begin{figure}[t!]
\centerline{
  \includegraphics[width=0.32\textwidth]{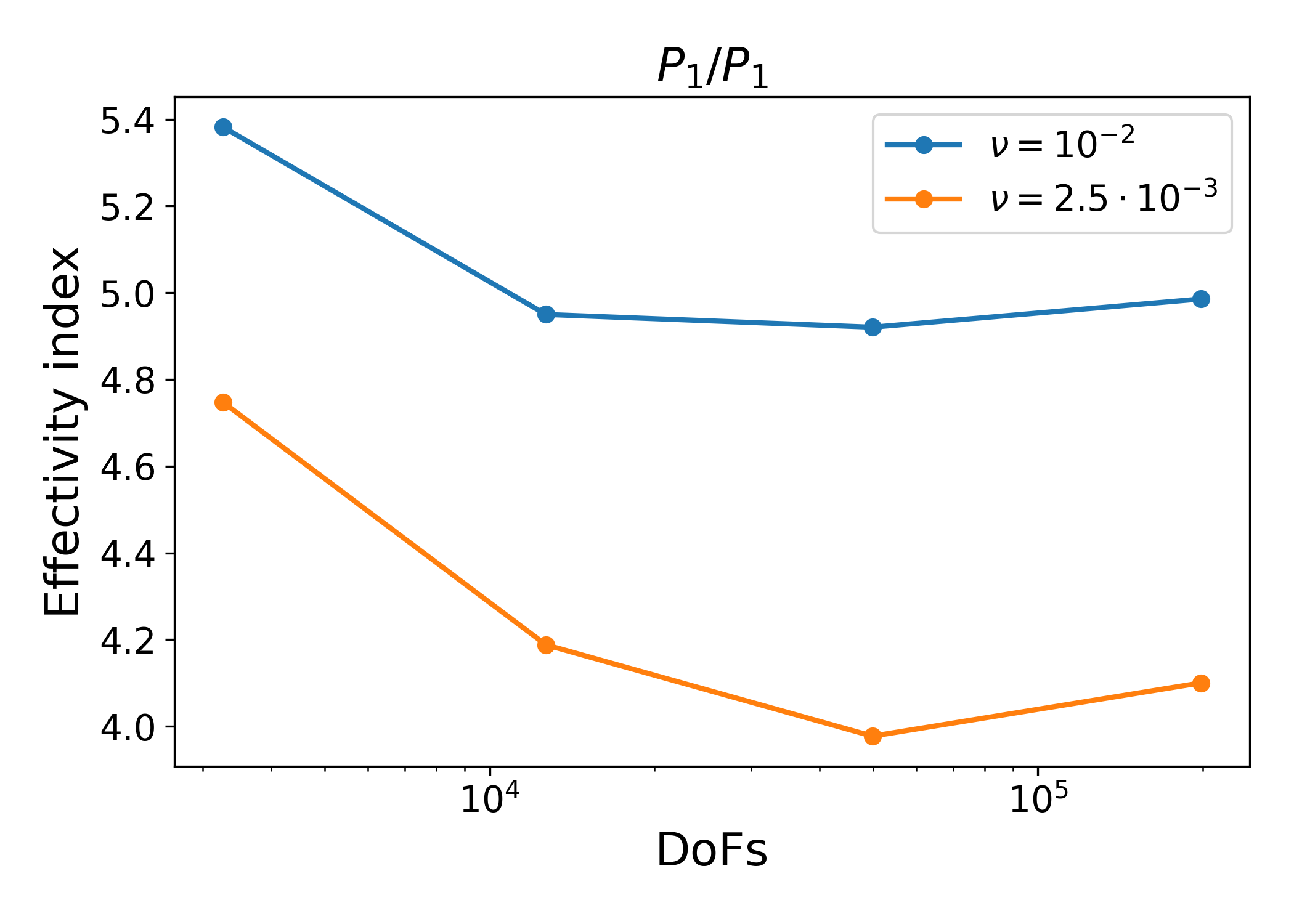}
  \hspace*{0.5em}
  \includegraphics[width=0.32\textwidth]{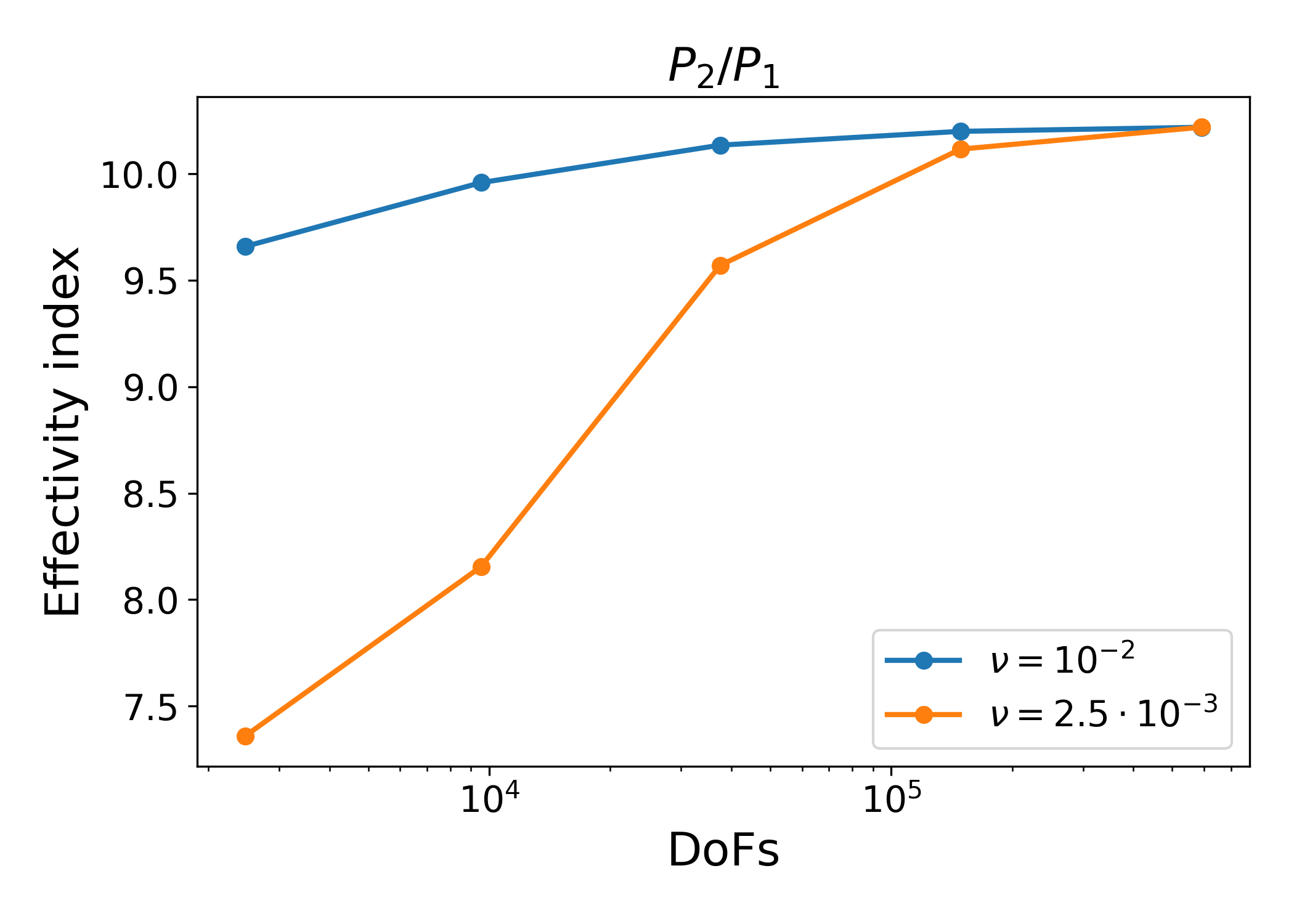}
  \hspace*{0.5em}
  \includegraphics[width=0.32\textwidth]{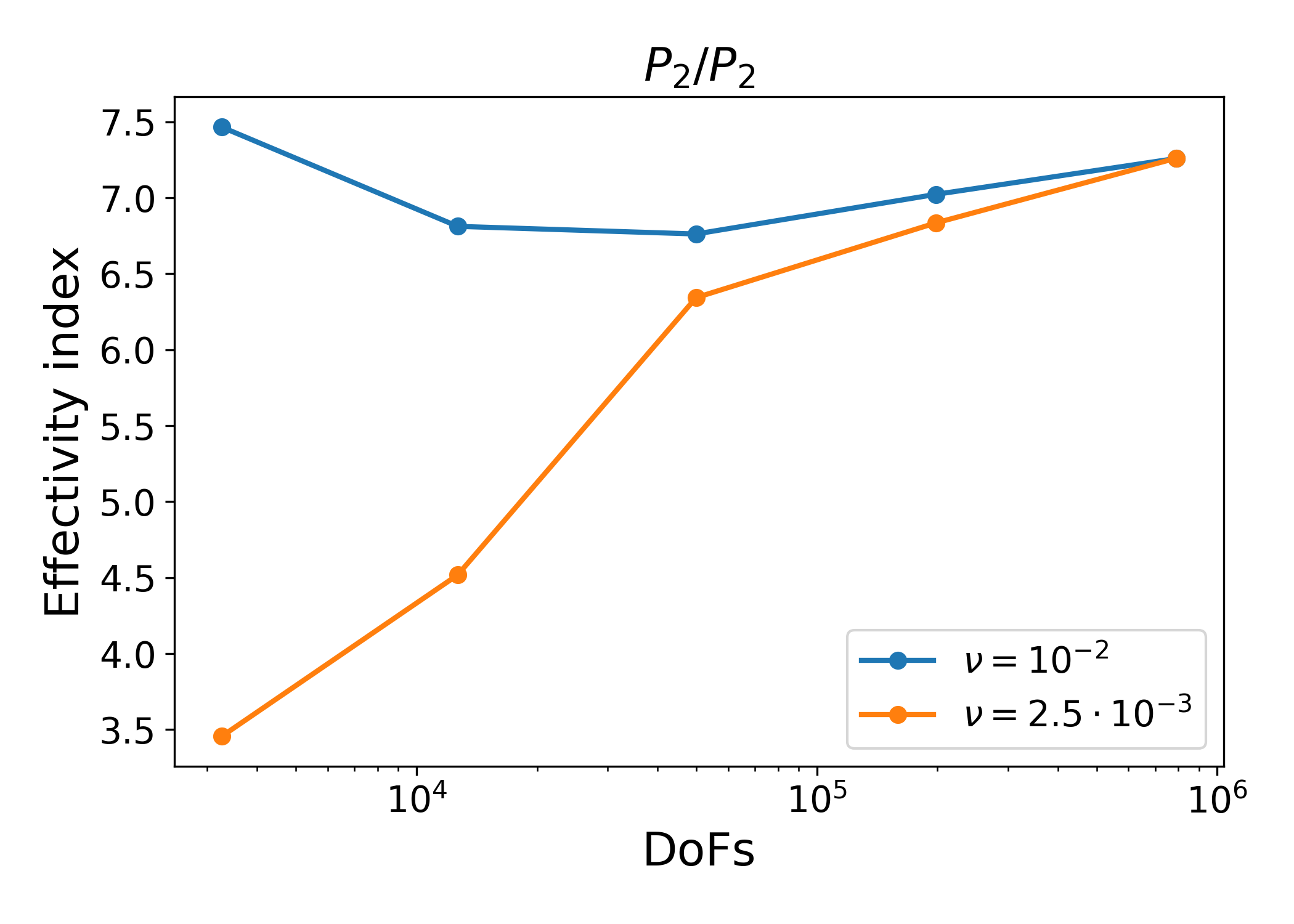}}
\centerline{\includegraphics[width=0.32\textwidth]{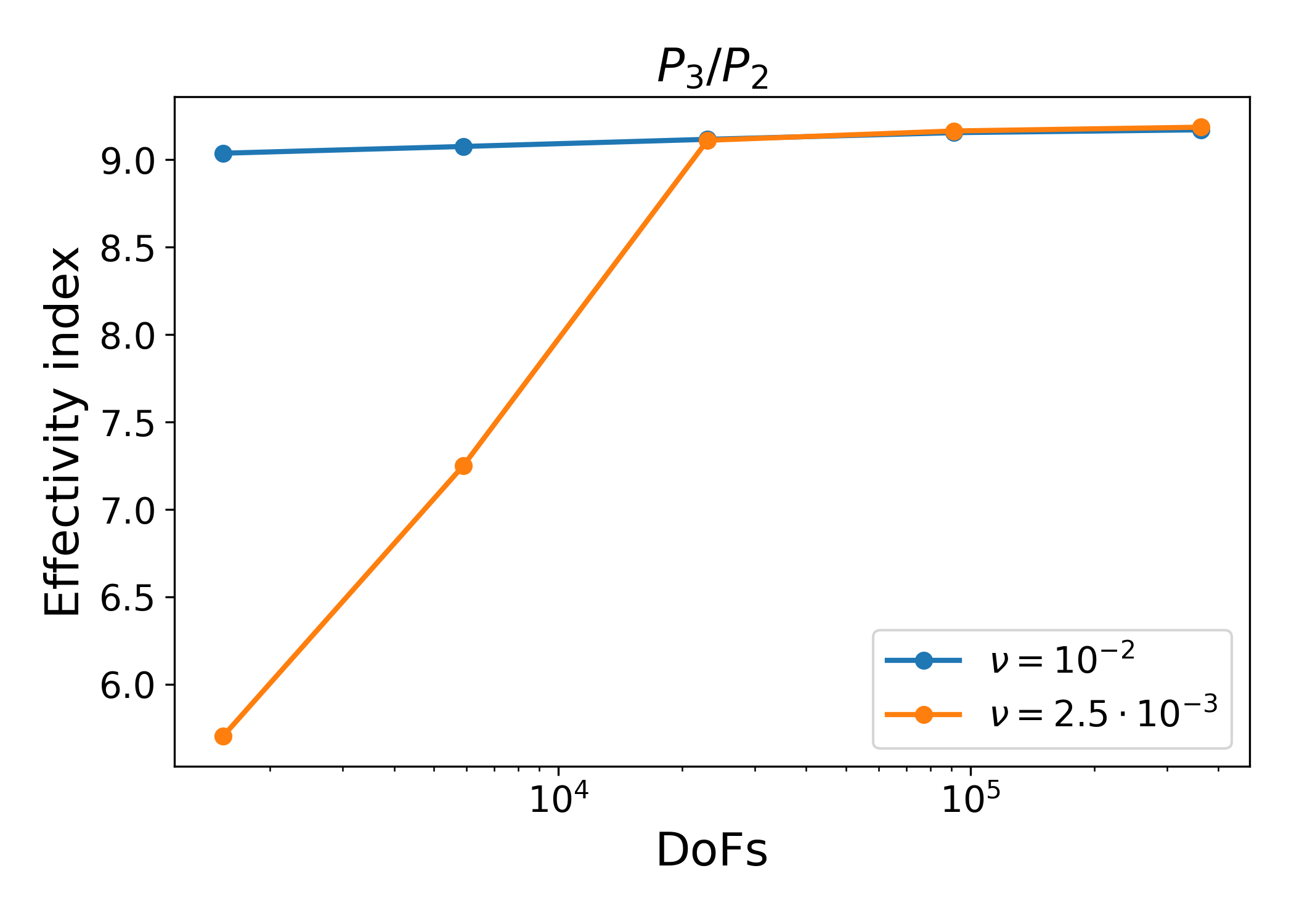}
\hspace*{0.5em}
  \includegraphics[width=0.32\textwidth]{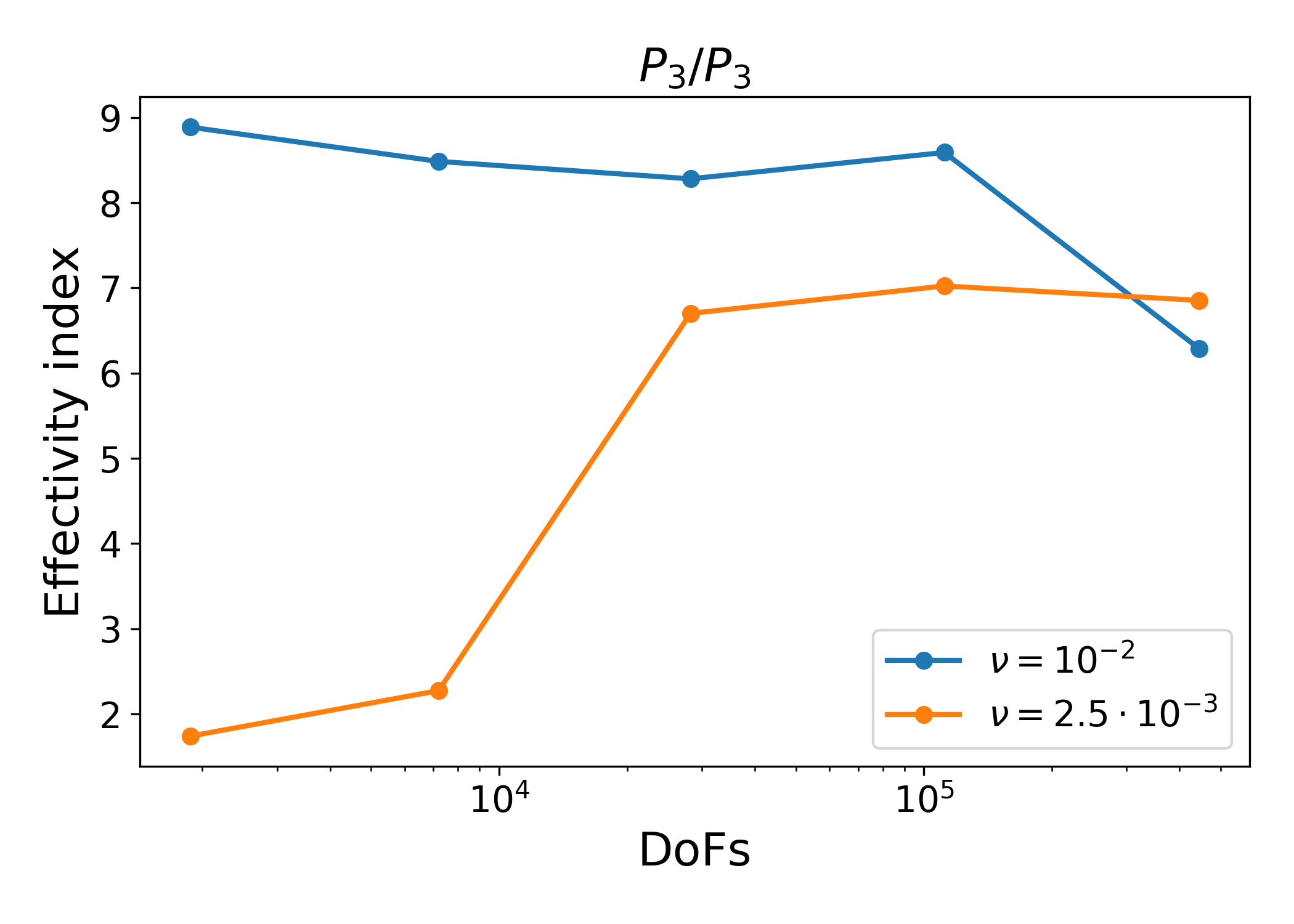}}
  \caption{Navier--Stokes problem. Effectivity indices for different pairs of finite element spaces and different values of the viscosity coefficient.}
  \label{fig:nse-smooth-all-effectivity}
\end{figure}

Figures~\ref{fig:nse-smooth-all-error}--\ref{fig:nse-smooth-all-effectivity} present results for 
two values of the viscosity coefficient, $\nu = 1/100$ and $\nu=1/400$. For smaller values of this 
coefficients, there were pairs of spaces where it was not possible to solve the nonlinear problem 
to the required accuracy (Euclidean norm of the residual vector smaller than $10^{-10}$). 
It can be observed that both the error in the norm $\no{\(\cdot,\cdot\)}{\mathrm{spg,nse}}$ and 
its estimator show the expected order of convergence or even a pre-asymptotic higher order of error 
reduction for the smaller viscosity coefficient. The effectivity indices are 
for all pairs of spaces, all levels, and both viscosity coefficients, in the same range as for 
the corresponding Oseen problem, see Figure~\ref{fig:smooth-all-effectivity}.

In summary, the presented numerical study shows that the proposed estimator gives good results 
for estimating the error in a norm that can be used for the a priori error analysis of the 
SUPG/PSPG/grad-div method for the steady-state incompressible Navier--Stokes problem. 

\section*{Acknowledgments} The work of M.~Afzal was supported by Gulf University for Science and Technology through GUST internal seed grant case no 145 and by the 
WIAS. The research was performed while M.~Afzal visited WIAS. 

\bibliographystyle{plainnat}
\bibliography{ref}

\end{document}